\title{On the Submodule Structure of Hook Specht Modules in Characteristic 2}
\author{Zain Ahmed Kapadia \\ Queen Mary, University of London, Mile End Road, London E1 4NS, UK}
\newtheorem{theorem}{Theorem}[section]
\newtheorem*{theorem*}{Theorem}
\newtheorem{conjecture}[theorem]{Conjecture}
\newtheorem*{conjecture*}{Conjecture}
\newtheorem{lemma}[theorem]{Lemma}
\newtheorem*{lemma*}{Lemma}
\newtheorem{proposition}[theorem]{Proposition}
\newtheorem*{proposition*}{Proposition}
\newtheorem{corollary}[theorem]{Corollary}
\newtheorem*{corollary*}{Corollary}
\theoremstyle{definition}
\newtheorem{definition}[theorem]{Definition}
\newtheorem*{definition*}{Definition}
\newtheorem{example}[theorem]{Example}
\newtheorem*{example*}{Example}
\theoremstyle{remark}
\newtheorem{remark}[theorem]{Remark}
\newtheorem*{remark*}{Remark}
\newtheorem*{statement*}{Statement}
\newcommand\addvmargin[1]{
  \node[fit=(current bounding box),inner ysep=#1,inner xsep=0]{};
} % Lets me add a vertical margin if I want to put a tikz diagram in a table
\DeclareMathOperator{\Hom}{Hom}
\DeclareMathOperator{\Ker}{Ker}
\DeclareMathOperator{\Dim}{Dim}
\DeclareMathOperator{\res}{Res}
\DeclareMathOperator{\Ind}{Ind}
\newcommand{\floor}[1]{\left \lfloor #1 \right \rfloor} 
\newcommand{\isom}{\cong} % So I can write "isom" more easily
\begin{document}

\newcommand\mnote[1]{\noindent\color{red}Matt: #1\color{black}}
\newcommand\znote[1]{\noindent\color{blue}Zain: #1\color{black}}
\newcommand\olive[1]{\noindent\color{olive} #1\color{black}}

\maketitle

\begin{abstract}
    The submodule structure of general Specht modules in prime characteristic is a difficult open problem. Kleshchev and Sheth [1999] \cite{kleshchev1999extensions} gave a combinatorial description of the submodule structure of Specht modules labelled by $2$-part partitions in prime characteristic. Using this result, as well as filtrations of Specht modules labelled by hook partitions via $2$-part Specht modules in characteristic $2$, one can study the submodule structure of hook Specht modules. In particular, we classify which of these are uniserial.
\end{abstract}

\tableofcontents

\newpage
\section{Introduction}\label{Introduction}

Let $n$ be a positive integer and let $S_n$ be the symmetric group on $n$ letters. In characteristic $0,$ the irreducible representations of $S_n$ are called Specht modules, $S^\lambda_0,$ and are parameterised by partitions $\lambda$ of $n.$ These Specht modules can reduced mod $p$ for any prime $p,$ giving $S^\lambda_p$ a module over $\mathbb{F}S_n.$ It is known when $S^\lambda_p$ is irreducible in positive characteristic \cite{JamesMathas1999} \cite{Fayers2005}, but understanding the full submodule structure or even the decomposition numbers for reducible Specht modules remains elusive.

However, the decomposition numbers for the Specht modules are fully understood in the case when $\lambda$ is a $2$-part partition, that is, when $\lambda = (\lambda_1, \lambda_2);$ and also in characteristic $2$ when $\lambda$ is a `hook' partition, that is, of the form $\lambda = (n-r, 1^r) \vdash n$ for $0 \leq r \leq n-1$ \cite{murphy1982submodule}. Murphy wrote two papers \cite{murphy1980decomposability} \cite{murphy1982submodule} which looked deeply at the submodule structure of hook Specht modules in characteristic $2$ by studying their endomorphism rings, and also by linking the structure of hook Specht modules in characteristic $2$ to that of $2$-part Specht modules. Despite this, not much progress has been made on this topic since.

This paper takes Murphy's results, as well as results from \cite{kleshchev1999extensions} which give a combinatorial description of $2$-part Specht modules in positive characteristic, to study the submodule structure of hook Specht modules in characteristic $2.$

We now briefly indicate the layout of this paper. 
\Cref{Background} contains background definitions and results regarding Specht modules, while in \cref{Results About 2-Part Specht Modules} we recall some results from the literature, and prove some new results about $2$-part Specht modules. 
In \cref{An Exact Sequence of 2-Part Specht Modules}, we construct an exact sequence of $2$-part Specht modules in the case that $p=2$ and $n$ is even. 
In \cref{A Filtration of Hook Specht Modules by 2-Part Specht Modules}, we explicitly construct a filtration of Specht modules labelled by hooks of the form $(n-r,1^r)$ with $n-r \geq r$ via $2$-part Specht modules in characteristic $2,$ a result which was proven by Sutton \cite{sutton2017graded} using KLR algebras.
For the case $n$ is even and $n-r \leq n,$ we prove in \cref{Another Filtration of Hook Specht Modules} that there is a filtration of $S^\lambda_2$ where all of the factors are isomorphic to $2$-part Specht modules, except for the bottom factor which is a quotient of a $2$-part Specht module isomorphic to a submodule of a $2$-part Specht module.
\Cref{Classification of Uniserial Hook Specht Modules in Characteristic 2} ends with an application of our results to classify all uniserial hook Specht modules in characteristic $2.$
\Cref{Further Questions} concludes the paper with some closing remarks and potential further avenues of research.

The author acknowledges his PhD supervisor, \href{https://webspace.maths.qmul.ac.uk/m.fayers/}{Dr. Matthew Fayers}, for the ideas and guidance provided throughout every step of this paper, and the \href{https://www.ukri.org/councils/epsrc/}{Engineering and Physical Sciences Research Council (EPSRC)} for funding this ongoing PhD project.

\newpage
\section{Background}\label{Background}

In this section we state introductory definitions and results, directing the reader to \cite{James1978} for more details.

Fix a non-negative integer $n.$ A \emph{composition} $\lambda$ of $n$ is a sequence of non-negative integers $\lambda = (\lambda_1, \lambda_2, \ldots)$ such that $\sum_{i\geq 1}\lambda_i = n.$ If $\lambda_i \geq \lambda_{i+1}$ for all $i,$ we say that $\lambda$ is a \emph{partition}, denoted $\lambda \vdash n.$ We will typically write $\lambda$ without trailing zeroes, and in multiplicative notation, for example, the partition $(5,1,1,0, \ldots)$ will be denoted $(5,1^2).$ The \emph{Young diagram} for a partition $\lambda$ is the set $[\lambda] = \{(i,j) \in \mathbb{N} \times \mathbb{N} \, | \, 1 \leq i, 1 \leq j \leq \lambda_i\}.$ Elements of $[\lambda]$ are called \emph{nodes}. We will picture Young diagrams in `English convention': the $r$th row of $[\lambda]$ consists of nodes of $[\lambda]$ with first co-ordinate equal to $r$, and the $c$th column of $[\lambda]$ consists of nodes of $[\lambda]$ with second co-ordinate equal to $c.$ There is a partial order on the set of partitions of $n$ as follows: we say $\lambda \trianglerighteq \mu$ ($\lambda$ `\emph{dominates}' $\mu$) if and only if for all $j, \sum_{i = 1}^j \lambda_i \geq \sum_{i=1}^j \mu_i.$

A \emph{$\lambda$-tableau} is a bijection from $[\lambda]$ to $\{1, \ldots, n\}.$ The symmetric group on $n$ elements, $S_n,$ acts on the set of $\lambda$-tableaux in the natural way. If $t$ is a tableau, \emph{a row stabiliser} (resp. \emph{column stabiliser}) is an element $\sigma \in S_n$ such that $i$ and $\sigma(i)$ are in the same row (resp. column) of $t,$ for all $i.$ We denote the subgroup of row (resp. column) stabilisers by $R_t$ (resp. $C_t$). Note that James \cite{James1978} refers to the subgroups $R_t$ (resp. $C_t$) as the row (resp. column) stabiliser.

We define an equivalence relation on the set of $\lambda$-tableaux by $t \sim s$ if and only if $\exists \sigma \in R_{t}$ such that $\sigma t = s.$ The \emph{tabloid} $\{t\}$ containing $t$ is the equivalence class of $t$ under this equivalence relation. $S_n$ acts on the set of $\lambda$-tabloids: for any $\pi \in S_n, \pi\{t\} := \{\pi t\},$ which is well-defined.

For an arbitrary field $\mathbb{F}$ and a partition $\lambda,$ let $M_\mathbb{F}^\lambda$ be the vector space over $\mathbb{F}$ whose basis elements are the $\lambda$-tabloids, with the $S_n$ action on $M_\mathbb{F}^\lambda$ being a linear extension of the $S_n$ action on tabloids.

For a tableau $t,$ the \emph{signed column sum}, $\kappa_t,$ is the element of the group algebra $\mathbb{F}S_n$ given by \[\kappa_t := \sum_{\sigma \in C_t} sgn(\sigma) \sigma.\] The \emph{polytabloid} $e_t$ is defined as \[e_t := \kappa_t \{t\}.\] If $\sigma$ is a column stabiliser of $t,$ it is clear that $(1+sgn(\sigma)\sigma)e_t = 0.$ The \emph{Specht module} $S_\mathbb{F}^\lambda$ for the partition $\lambda$ is the submodule of $M_\mathbb{F}^\lambda$ spanned by polytabloids. We may write $M^\lambda$ or $S^\lambda$ if the ground field is clear from context, and/or if the result is true for arbitrary ground fields. If the choice of ground field is arbitrary up to the characteristic $p$ of $\mathbb{F},$ we may write $M_p^\lambda$ and $S_p^\lambda.$ If the characteristic of $\mathbb{F}$ is $0,$ then $\{S^\lambda \, | \, \lambda \vdash n\}$ is a complete set of non-repeating irreducible representations for $S_n.$

Let $t$ be a fixed $\lambda$-tableau. Let $X$ be a subset of the $i$th column of $t,$ and $Y$ a subset of the $(i+1)$th column of $t.$ We will take $X$ to be taken from the end of the $i$th column of $t$ and $Y$ will be at the beginning of the $(i+1)$th column. Let $\sigma_1, \ldots, \sigma_k$ be coset representatives for $S_X \times S_Y$ in $S_{X \cup Y},$ and let $G_{X,Y} := \sum_{j=1}^k sgn(\sigma_j)\sigma_j.$ $G_{X,Y}$ is called a \emph{Garnir element}. For practical purposes, we may take $\sigma_1, \ldots, \sigma_k$ so that $\sigma_1 t, \ldots, \sigma_k t$ are all the tableaux which agree with $t$ except in the positions occupied by $X \cup Y,$ and whose entries increase vertically downwards in the positions occupied by $X\cup Y.$ If $| X \cup Y | > \mu'_i$ then $G_{X,Y} e_t = 0$ for any base field. Hence, these Garnir elements allow us to express a polytabloid as a sum of other polytabloids, and these relations are called \emph{Garnir relations}. 

Let $\lambda \vdash n$ and let $M$ be an $\mathbb{F}S_n$-module. Then there is a well-defined $\mathbb{F}S_n$-homomorphism $f : S^\lambda \to M$ such that $f(e_t) \mapsto m$ for some $m \in M$ if and only if $\pi m = 0$ for all $\pi$ of the form
$\pi=1+sgn(\sigma)\sigma$ where $\sigma$ is a column stabiliser of $t,$ and for all 
$\pi$ such that $\pi$ is a Garnir element on neighbouring columns of $t.$ That is, a linear map is an $\mathbb{F}S_n$-module homomorphism if and only if $f$ satisfies all column relations and all Garnir relations.

We define a \emph{standard tableau} $t$ to be a tableau such that the numbers are increasing along the rows (from left to right) and down the column (from top to bottom) of $t.$ We say that $\{t\}$ is a \emph{standard tabloid} if there is a standard tableau in the equivalence class $\{t\}.$ We say that $e_t$ is a \emph{standard polytabloid} if $t$ is standard. Using Garnir elements, one can rewrite any polytabloid $e_t$ as a sum of standard polytabloids. In particular, the standard polytabloids form a basis for the Specht modules, defined over any field, hence the dimension of $S^\lambda$ is independent of the ground field, and equals the number of standard $\lambda$-tableaux.

Let $\langle \,,\, \rangle$ be the unique bilinear form on $M^\lambda$ for which $\langle \{s\}, \{t\} \rangle = 1$ if $\{s\} = \{t\}, 0 $ if $\{s\} \neq \{t\}.$ This is a symmetric, $S_n$-invariant, non-singular bilinear form on $M^\lambda$, regardless of the choice of $\mathbb{F}$. In particular, if $\mathbb{F} = \mathbb{Q},$ then the form is an inner product. Denote $S^{\lambda\perp} = \{u \in M^\lambda \mid \langle u,v \rangle = 0, \forall v \in S^\lambda\}.$

Let $D_\mathbb{F}^\lambda := S_\mathbb{F}^\lambda/(S_\mathbb{F}^\lambda \cap S_\mathbb{F}^{\lambda\perp})$ (sometimes referred to as the `\emph{James module}' of $\lambda$). We have the following major result:

\begin{theorem}\label{thm1}
    $D_\mathbb{F}^\lambda$ is zero or absolutely irreducible. Further, if this is non-zero, then $S_\mathbb{F}^\lambda \cap S_\mathbb{F}^{\lambda\perp}$ is the unique maximal submodule of $S_\mathbb{F}^\lambda$, and $D_\mathbb{F}^\lambda$ is self-dual.
\end{theorem}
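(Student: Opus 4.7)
The plan is to deduce the theorem from the fundamental dichotomy known as James's submodule theorem: for every $\mathbb{F}S_n$-submodule $U$ of $M^\lambda$, either $S^\lambda \subseteq U$ or $U \subseteq S^{\lambda\perp}$. Granted this, suppose $V$ is an $\mathbb{F}S_n$-submodule of $S^\lambda$ strictly containing $S^\lambda \cap S^{\lambda\perp}$. Then $V \not\subseteq S^{\lambda\perp}$, so viewing $V$ as a submodule of $M^\lambda$ forces $S^\lambda \subseteq V$, giving $V = S^\lambda$. Hence $S^\lambda \cap S^{\lambda\perp}$ is the unique maximal submodule of $S^\lambda$, and $D^\lambda$ is either zero or simple.

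To prove the dichotomy I would first establish the following computational lemma: for any $\lambda$-tableau $t$ and any $\lambda$-tabloid $\{s\}$, the element $\kappa_t \{s\} \in M^\lambda$ equals $0$ if two entries in some column of $t$ share a row in $\{s\}$, and equals $\pm e_t$ otherwise, with the sign determined by the column permutation carrying $\{s\}$ onto $\{t\}$. It follows that, for any $u = \sum_{\{s\}} a_{\{s\}} \{s\} \in U$, we have $\kappa_t u = c_{t,u} e_t$ for some scalar $c_{t,u} \in \mathbb{F}$. If some $c_{t,u} \neq 0$ then $e_t \in U$, and since $S^\lambda$ is cyclically generated by any single polytabloid $e_t$ (via the transitive $S_n$-action on tableaux), $S^\lambda \subseteq U$. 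Otherwise $\kappa_t u = 0$ for every $t$; using that $\kappa_t$ is self-adjoint for $\langle\cdot,\cdot\rangle$ (immediate from $C_t$ being closed under inversion and $\mathrm{sgn}$ being a homomorphism), one computes $\langle e_t, u \rangle = \langle \kappa_t \{t\}, u \rangle = \langle \{t\}, \kappa_t u \rangle = 0$ for every $t$, and since the $e_t$ span $S^\lambda$ we conclude $u \in S^{\lambda\perp}$, so $U \subseteq S^{\lambda\perp}$.

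For self-duality of $D^\lambda$, I would note that the restriction of $\langle\cdot,\cdot\rangle$ to $S^\lambda$ has radical exactly $S^\lambda \cap S^{\lambda\perp}$ by definition, so it descends to a non-degenerate $S_n$-invariant bilinear form on $D^\lambda$, yielding an isomorphism $D^\lambda \cong (D^\lambda)^*$. Absolute irreducibility follows because the entire argument, namely the key lemma, the cyclic generation of $S^\lambda$, and the adjointness manipulation, is purely formal and survives base change: applying it verbatim over an algebraic closure $\overline{\mathbb{F}}$ shows that $D^\lambda \otimes_{\mathbb{F}} \overline{\mathbb{F}}$ is itself simple (or zero), which is exactly the statement of absolute irreducibility.

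I expect the main obstacle to be the careful bookkeeping in the key lemma and its corollary that $\kappa_t$ maps all of $M^\lambda$ into the one-dimensional subspace $\mathbb{F} e_t$; the case distinction on whether a tabloid $\{s\}$ has a column-repeat in $t$ has to be handled uniformly, and one must track signs via column permutations. Once this pointwise control on the column antisymmetriser is in place, the submodule theorem and the three claims of the statement fall out cleanly from the self-adjointness of $\kappa_t$ under the form.
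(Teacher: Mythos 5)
Your proposal is correct, and it is precisely the standard argument of James, which the paper cites (as \cite{James1978}) without reproducing a proof. The key computational lemma on $\kappa_t\{s\}$, the resulting submodule-theorem dichotomy ($S^\lambda \subseteq U$ or $U \subseteq S^{\lambda\perp}$), the self-adjointness of $\kappa_t$, and the descent of the bilinear form to $D^\lambda$ are all exactly as in James; your base-change remark for absolute irreducibility is the right mechanism, since the radical of the form is the kernel of the Gram map, which is defined over the prime field and therefore commutes with extension to $\overline{\mathbb{F}}$.

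One small point of wording: you argue that no proper submodule \emph{strictly contains} $S^\lambda \cap S^{\lambda\perp}$, which gives maximality; for \emph{uniqueness} of the maximal submodule it is cleanest to apply the dichotomy directly to an arbitrary proper submodule $W \subseteq S^\lambda$, concluding $W \subseteq S^{\lambda\perp}$ and hence $W \subseteq S^\lambda \cap S^{\lambda\perp}$. This is an immediate corollary of the dichotomy you already establish, so it is a matter of presentation rather than a gap.
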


We say that a partition $\lambda$ is \emph{$p$-regular} if there is \emph{no} $i \in \mathbb{Z}$ such that $\lambda_i = \lambda_{i+1} = \cdots = \lambda_{i+p-1} > 0$, and we say that $\lambda$ is \emph{$p$-singular} if there is such an $i$. If the characteristic of $\mathbb{F}$ is $p > 0$, one can use \Cref{thm1} to show that $D^\lambda_\mathbb{F} \neq 0$ if and only if $\lambda$ is $p$-regular. If the characteristic of $\mathbb{F}$ is $p > 0,$ then $\{D_p^\lambda \, |, \, \lambda \vdash n, \lambda \text{ is $p$-regular}\}$ is a complete set of non-repeating irreducible modular representations for $S_n.$

If, for $\lambda \vdash n,$ we have $\lambda_3 = 0,$ we say that $\lambda$ is a \emph{$2$-part partition}, and we call $S^\lambda$ a `\emph{$2$-part Specht module}'. Similarly, if $\lambda \vdash n$ is of the form $(n-r, 1^r)$ for $0 \leq r \leq n-1,$ we say that $\lambda$ is a \emph{hook partition}, and we call $S^\lambda$ a `\emph{hook Specht module}'. Note that the partition $\lambda = (n)$ is considered both a $2$-part partition and a hook partition, and so $S^{(n)}$ is also a $2$-part Specht module and a hook Specht module. This paper will focus primarily on $2$-part Specht modules, hook Specht modules, and the relationship between them.

\newpage
\section{Results About 2-Part Specht Modules}\label{Results About 2-Part Specht Modules}

In this section, we recall and also prove some results on the structure of $2$-part Specht modules in characteristic $2.$ More specifically, we recall the combinatorial description of $2$-part Specht modules in positive characteristic; study when these Specht modules are uniserial in characteristic $2;$ calculate their socle; and provide a sufficient condition for when the submodule lattices of two $2$-part Specht modules are isomorphic in characteristic $2.$ 

The decomposition numbers for $2$-part Specht modules in positive characteristic were given by James in \cite[Theorem 24.15]{James1978}, and a full combinatorial description of the submodule structure for $p$-regular $2$-part Specht modules was given by Kleshchev and Sheth in \cite[Corollary 3.4]{kleshchev1999extensions}. We recall the details in this section with some slight notational differences from the literature.

Let $\lambda = (\lambda_1, \lambda_2) \vdash n,$ set $\alpha := \lambda_1 - \lambda_2 + 1,$ and let $\alpha = \sum_{i = 0}^\infty \alpha_i p^i$ be the $p$-adic expansion of $\alpha.$ Write $B_\lambda^- := \{i \, |\,  \alpha_i \neq 0\}, B_\lambda^+ := \{i \, |\,  \alpha_i \neq p-1\},$ and $[i,j) := \{k \in \mathbb{Z} \, |\,  i \leq k < j\}.$ Let $\hat{A}_\lambda$ be the family of sets which comprises the empty set along with any set $I$ of the form \[I = [i_1, i_2) \cup \cdots \cup [i_{2t-1}, i_{2t})\] with $i_1 < i_2 < \ldots < i_{2t}$ and $i_{2j-1} \in B_\lambda^-, i_{2j} \in B_\lambda^+$ for any $j = 1, 2, \ldots, t.$

For every $I \in \hat{A}_\lambda$ define \[\delta_I := \sum_{i \in I}(p-1-\alpha_i)p^i + \sum_{j=1}^t p^{i_{2j-1}},\] and note that $\delta_\emptyset = 0.$

Now set $A_\lambda := \{I \in \hat{A}_\lambda \, |\,  \delta_I \leq \lambda_2\},$ and for any $I \in A_\lambda$ define $\nu_I(\lambda) := (\lambda_1+ \delta_I, \lambda_2-\delta_I).$

\begin{theorem}\label{2_part_submod_structure} \cite[Corollary 3.4]{kleshchev1999extensions}
    Let $\lambda = (\lambda_1, \lambda_2) \vdash n$ be $2$-regular (that is, assume $\lambda_1 > \lambda_2$). Let $S^\lambda_p$ be the Specht module corresponding to $\lambda$ in characteristic $p.$ 
    Then 
    \begin{enumerate}
        \item $S^\lambda_p$ is multiplicity-free, and the set of composition factors is $\{D^{\nu_I(\lambda)}_p \, |\,  I \in A_\lambda\}.$
        \item Write $M_{\nu_I(\lambda)}$ for the smallest submodule of $S^\lambda_p$ that has $D^{\nu_I(\lambda)}_p$ as a composition factor. Then $M_{\nu_J(\lambda)} \supseteq M_{\nu_I(\lambda)}$ if and only if $J \subseteq I.$
    \end{enumerate}
    
\end{theorem}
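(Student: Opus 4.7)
Part (1) I would deduce directly from James's decomposition numbers \cite[Theorem 24.15]{James1978} for two-part Specht modules: James computes the multiplicity $[S^\lambda_p : D^\mu_p]$ purely in terms of the $p$-adic expansion of $\alpha = \lambda_1 - \lambda_2 + 1$, and translating his description into the $\delta_I$ notation gives exactly the family $\{D^{\nu_I(\lambda)}_p : I \in A_\lambda\}$ as composition factors, each occurring once. Once part (1) is established, the well-definedness of the smallest submodule $M_{\nu_I(\lambda)}$ containing $D^{\nu_I(\lambda)}_p$ as a composition factor is automatic.

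For part (2), my plan is induction on $n$, driven by the standard Specht filtration of $\res_{S_{n-1}}^{S_n} S^\lambda_p$ into sections $S^{(\lambda_1-1,\lambda_2)}_p$ and $S^{(\lambda_1,\lambda_2-1)}_p$. Because restriction is exact and $S^\lambda_p$ is multiplicity-free, each submodule is detected by its image and intersection with these sections; by the inductive hypothesis, the submodule lattices of the two restricted pieces are controlled by $A_{(\lambda_1-1,\lambda_2)}$ and $A_{(\lambda_1,\lambda_2-1)}$ under reverse inclusion. The idea is to construct each $M_{\nu_I(\lambda)}$ by pulling back a suitable submodule under the quotient map $S^\lambda_p \twoheadrightarrow S^{(\lambda_1-1,\lambda_2)}_p$, or by intersecting with $S^{(\lambda_1,\lambda_2-1)}_p$ sitting inside the restriction, then to check the containment $M_{\nu_J(\lambda)} \supseteq M_{\nu_I(\lambda)}$ whenever $J \subseteq I$ and to rule out spurious additional containments. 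The latter is a non-vanishing step and would require computing $\operatorname{Ext}^1$ or $\hom$ spaces between the simples $D^{\nu_I(\lambda)}_p$ within $S^\lambda_p$.

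The main obstacle is the combinatorial matching: when $\alpha$ shifts by $\pm 1$, its $p$-adic digits change via carries, reshaping $B_\lambda^\pm$ and the family $A_\lambda$ in a nontrivial way, and it is delicate to verify that this reshaping is exactly compatible with the alternation condition $i_{2j-1} \in B_\lambda^-$, $i_{2j} \in B_\lambda^+$ defining $A_\lambda$. A more conceptual alternative I would try in parallel is to invoke Schur--Weyl duality, which relates $S^{(\lambda_1,\lambda_2)}_p$ to the $SL_2$ Weyl module $V(\lambda_1 - \lambda_2)$ in characteristic $p$; the submodule structure of $V(m)$ is classical and indexed precisely by the $p$-adic expansion of $m+1$ in exactly the pattern described here, yielding part (2) with far less combinatorial bookkeeping than the inductive restriction argument.
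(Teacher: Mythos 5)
The paper does not prove \Cref{2_part_submod_structure} at all: it is quoted verbatim from Kleshchev and Sheth \cite[Corollary 3.4]{kleshchev1999extensions}, so there is no in-paper proof to compare your attempt against. Your proposal is therefore a reconstruction of the Kleshchev--Sheth argument rather than a proof of something the paper establishes. For context, the paper does later give a small reformulation (the lemma preceding \Cref{2_part_submod_structure_new_proof}) by translating \cite[Theorem 24.15]{James1978} into the $\delta_I$ notation, which is exactly what you propose for part (1); that step is routine and uncontroversial.

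Your plan for part (2), however, has a genuine gap: the two steps you flag as "the main obstacle" and "a non-vanishing step" --- verifying that the $p$-adic carry behaviour of $\alpha \mapsto \alpha \pm 1$ is compatible with the alternation condition defining $A_\lambda$, and ruling out containments beyond $J \subseteq I$ --- are precisely the content of the theorem. As written, the restriction induction only establishes that submodules of $S^\lambda_p$ restrict to submodules of the two smaller Specht modules; it does not by itself show the lattice of $S^\lambda_p$ is \emph{exactly} reconstructed from the two restricted lattices, and Kleshchev--Sheth need genuine $\operatorname{Ext}^1$ computations (via translation functors and the branching rules for simples) to close this. Your Schur--Weyl alternative also needs care: the usual Schur functor $e(-): S(m,n)\text{-mod} \to \mathbb{F}S_n\text{-mod}$ requires $m \geq n$, so for $m=2 < n$ the passage from $SL_2$ Weyl modules $V(\lambda_1 - \lambda_2)$ to $S^{(\lambda_1,\lambda_2)}_p$ is not given by the standard idempotent truncation. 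A functorial relationship does exist and can be made to preserve submodule lattices for two-row weights, but that is itself a nontrivial theorem (exactness alone does not prevent simple modules from being killed, which would collapse parts of the lattice), so invoking Schur--Weyl duality does not give part (2) "for free."
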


We note that this theorem does not apply to the case that $p=2$ and $\lambda = (\lambda_1, \lambda_2) \vdash n$ is $2$-singular. We will do this case separately below, but we will first need the submodule structure of $S_2^{(a+1,a)}.$

\begin{example}\label{S_2^{(a+1_a)}}
Let $p = 2$ and let $\lambda = (a+1,a)\vdash n.$ Then $\alpha = 2$ which in binary is $10_2.$ For $i \geq 2,$ let $I_i$ be the interval $[1,i),$ and let $I_1$ denote the empty set. Then $\Tilde{A}_\lambda = \{I_i \, | \, i \geq 1\},$ and $\delta_{I_i} = 2^i - 2,$ so $A_\lambda = \{I_i \, | \, 2^i - 2 \leq a\}.$ Clearly $A_\lambda$ is a totally ordered set, with $I_i \supseteq I_j$ if and only if $i \geq j.$ 

So the set of composition factors is $\{D_2^{(a+1 + 2^i - 2, a - 2^i + 2)} \, | \, 2^i - 2 \leq a\},$ and submodules $\{ M_{(a+1+2^i-2, a-2^i + 2)} \, | \, 0 \leq 2^i-2 \leq a\}$ where $M_{(a+1+2^i-2, a-2^i + 2)} \supseteq M_{(a+1+2^j-2, a-2^j + 2)}$ if and only if $i \leq j.$ Hence $S_2^{(a+1,a)}$ is uniserial.
\end{example}

We will also need some definitions and results regarding restricting to a specific residue, which we take from \cite[p. 11-13]{brundan2000translation}.

For the Young diagram of a partition $\lambda,$ one can label a node $A \in [\lambda]$ with its \emph{residue mod $p$}, that is, for a node in column $c$ and row $r,$ we label the node by $rsd(A) := c-r \mod p.$ For $i \in \mathbb{Z}/p\mathbb{Z}$ we define the $i$-content of $\lambda$ to be the integer $cont_i(\lambda) := |\{A \in [\lambda] \, | \, rsd(A) = i\}|.$ 

The Nakayama conjecture, proved by Brauer and Robinson in 1947 \cite{brauer1947conjecture}\cite{robinson1947conjecture}, provides the block classification of Specht modules and James modules

\begin{theorem}[Nakayama's Conjecture]\label{Nakayama's_Conjecture}\cite{brauer1947conjecture}\cite{robinson1947conjecture}
\begin{itemize}
    \item If $\lambda, \mu \vdash n,$ then $S_p^\lambda$ and $S_p^\mu$ lie in the same block if and only if $cont_i(\lambda) = cont_i(\mu)$ for all $i \in \mathbb{Z}/p\mathbb{Z}.$ \cite[p. 347]{littlewood1951modular}
    \item If $\lambda$ and $\mu$ are $p$-regular partitions of $n,$ then $D_p^\lambda$ and $D_p^\mu$ lie in the same block if and only if $cont_i(\lambda) = cont_i(\mu)$ for all $i \in \mathbb{Z}/p\mathbb{Z}.$ 
\end{itemize}
\end{theorem}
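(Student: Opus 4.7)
The plan is to reduce both parts of the theorem to a purely combinatorial fact about $p$-cores. Recall that the $p$-core $\lambda^{(0)}$ of a partition $\lambda$ is obtained by iteratively stripping off rim $p$-hooks from $[\lambda]$ until none remain, an outcome which is well-defined (independent of the order of strippings). My strategy is to establish two intermediate equivalences whose concatenation yields the theorem: first, that $\lambda$ and $\mu$ have the same $p$-core if and only if $cont_i(\lambda) = cont_i(\mu)$ for all $i \in \mathbb{Z}/p\mathbb{Z}$; and second, that $S_p^\lambda$ and $S_p^\mu$ lie in the same block if and only if $\lambda^{(0)} = \mu^{(0)}$.

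For the first equivalence, I would use the abacus display: encode $\lambda$ by its $\beta$-numbers arranged as beads on $p$ runners. Removing a rim $p$-hook translates to sliding a single bead up one position on its runner, so pushing all beads up as far as possible yields the abacus of $\lambda^{(0)}$. Consequently the multiset of runner-occupation counts is a $p$-hook invariant and, together with the runner, determines the $p$-core. A direct count along each runner shows that this data agrees with the tuple $(cont_0(\lambda), \ldots, cont_{p-1}(\lambda))$ up to a fixed shift depending only on $n$ and the total number of beads, giving both directions of the first equivalence.

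For the second equivalence, the implication from same block to same $p$-core is the easier half: I would produce central elements of $\mathbb{F}S_n$ whose eigenvalues on $S_p^\lambda$ distinguish $p$-cores. The class sums $C_k$ of $k$-cycles are central and act on $S^\lambda$ over $\mathbb{Q}$ by scalars which are symmetric functions in the contents of the nodes of $[\lambda]$; reducing modulo $p$, the central character of the block containing $S_p^\lambda$ is seen to be determined by $(cont_0(\lambda), \ldots, cont_{p-1}(\lambda))$, and hence by $\lambda^{(0)}$ via the first equivalence. The converse---same $p$-core implies same block---is the main obstacle. I would proceed by induction on the $p$-weight $w = (n - |\lambda^{(0)}|)/p$, using the branching rule that $\res^{S_n}_{S_{n-1}} S_p^\lambda$ has a Specht filtration indexed by removable nodes of $\lambda$, together with its dual statement for induction via addable nodes, to build chains of Specht modules between any two partitions of $n$ with the same $p$-core in which consecutive terms share a composition factor and therefore lie in a common block.

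Finally, the second part of the theorem follows from the first: for $p$-regular $\lambda$, the simple module $D_p^\lambda$ is a composition factor of $S_p^\lambda$ by \Cref{thm1}, so $D_p^\lambda$ lies in the same block as $S_p^\lambda$. Hence $D_p^\lambda$ and $D_p^\mu$ lie in a common block if and only if $S_p^\lambda$ and $S_p^\mu$ do, and the second part reduces to the first. The genuine difficulty concentrates in the converse direction of the block equivalence: the abacus combinatorics is mechanical and the central-character argument is standard, but linking Specht modules with the same $p$-core via shared composition factors demands careful inductive bookkeeping, particularly at the low-weight base cases where removable and addable nodes must be tracked explicitly.
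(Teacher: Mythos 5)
The paper does not prove this theorem: it is stated as a cited classical result (attributed to Brauer and Robinson, with a further reference to Littlewood for the Specht-module formulation), and is used later as a black box, e.g.\ in the proof of \Cref{submod_structure_S(a_a)}. So there is no ``paper's proof'' to compare your proposal against.

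That said, your sketch is a faithful outline of the standard Brauer--Robinson/James--Kerber route, and the high-level decomposition is right: reduce to ``same block $\iff$ same $p$-core,'' establish the combinatorial equivalence ``same $p$-core $\iff$ same content vector'' via the abacus, and deduce the second bullet from the first using that $D_p^\lambda$ is a composition factor of $S_p^\lambda$ (\Cref{thm1}), so the two always share a block. Two cautions. First, in the central-character half your wording runs the wrong way: you say the central character ``is determined by'' the content vector, but what you actually need is that the central character \emph{determines} the content vector (equivalently, that the map from content vectors to central characters is injective). That injectivity is genuine work: the eigenvalues of the Jucys--Murphy power sums recover $\sum_i \mathrm{cont}_i(\lambda)\,i^k \bmod p$, a Vandermonde system over $\mathbb{F}_p$ gives $\mathrm{cont}_i(\lambda) \bmod p$, and one then needs a separate (abacus-style) argument that for $\lambda,\mu\vdash n$ the residues of the content vector already pin down the $p$-core. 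Your sketch glosses over this last step. Second, you correctly identify the converse direction (``same $p$-core $\Rightarrow$ same block'') as the real content of Nakayama's conjecture and only gesture at the inductive branching argument; that is fair to flag as the hard part, but as written the proposal is an outline rather than a proof, and the paper never needed to supply one because it cites the result.
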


So we can label each block by a list of residue contents $c_0, c_1, \ldots, c_{p-1},$ and a Specht module or James modules lies in this block if and only if the partition which labels it has the same residue contents.

We define the functors $e_i : \mathbb{F}S_n$-mod $\to$ $\mathbb{F}S_{n-1}$-mod and $f_i : \mathbb{F}S_n$-mod $\to$ $\mathbb{F}S_{n+1}$-mod by defining them first on a module $M$ in a fixed block, and then extending additively to all of $\mathbb{F}S_n$-mod. Assume $M$ belongs to the block corresponding to the residue contents $c_0, c_1, \ldots, c_{p-1},$ that is, every composition factor of $M$ is of the form $D_p^\lambda$ with $cont_i(\lambda) = c_i$ for all $i \in \mathbb{Z}/p\mathbb{Z}.$ Let $e_i M$ (resp. $f_i M$) denote the largest submodule of $\Res^{S_n}_{S_{n-1}} M$ (resp. $\Ind^{S_{n+1}}_{S_n} M$) all of whose composition factors are of the form $D_p^\mu$ with $cont_i(\mu) = c_i - 1$ (resp. $cont_i(\mu) = c_i + 1$) and $cont_j(\mu) = c_j$ for all $j \neq i \in \mathbb{Z}/p\mathbb{Z}.$ Then the restriction (resp. induction) functor $\Res^{S_n}_{S_{n-1}}$ (resp. $\Ind^{S_{n+1}}_{S_n}$) can be written as $\bigoplus_{i =0}^{p-1} e_i$ (resp. $\bigoplus_{i =0}^{p-1} f_i$).

A removable node $A$ of $\lambda$ is called a normal $i$-node if $rsd(A) = i$ and for every addable node $B$ to the right of $A$ with $rsd(B) = i,$ there exists a removable node $C_B$ strictly between $A$ and $B$ with $rsd(C_B) = i,$ and moreover $B \neq B'$ implies $C_B \neq C_{B'}.$ A removable node is called a good node if it is the leftmost among the normal nodes of a fixed residue. One can similarly define addable conormal $i$-nodes and cogood nodes.

The following results tell us about the behaviour of $e_i$ on Specht and James modules.

\begin{theorem}\label{e_i S^lambda}

    For $\lambda \vdash n, p$ prime, and $i \in \mathbb{Z}/p\mathbb{Z}, e_i S_p^\lambda$ has a filtration with factors occurring given by $\{S_p^\mu \,|\, \mu \text{ is a partition obtained by $\lambda$ by removing a removable $i$-node}\},$ where $S_p^\mu$ occurs above $S_p^\nu$ in the filtration if $\mu \trianglerighteq \nu$ \cite[Theorem 9.2]{James1978} [\Cref{Nakayama's_Conjecture}].
\end{theorem}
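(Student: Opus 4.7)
The plan is to deduce this directly from James's branching theorem together with the block decomposition of the restriction functor. First I would invoke \cite[Theorem 9.2]{James1978}, which gives a filtration $0 = V_0 \subset V_1 \subset \cdots \subset V_k = \res^{S_n}_{S_{n-1}} S_p^\lambda$ whose successive quotients $V_j/V_{j-1}$ are precisely the Specht modules $S_p^\mu$ as $\mu$ runs over all partitions obtained from $\lambda$ by removing a single removable node, ordered so that $S_p^\mu$ appears above $S_p^\nu$ when $\mu \trianglerighteq \nu$. This is already the desired shape of filtration, only before the projection onto a single residue block.

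Next I would use the fact that $\res^{S_n}_{S_{n-1}}$ decomposes as $\bigoplus_{i \in \mathbb{Z}/p\mathbb{Z}} e_i$, so that $\res^{S_n}_{S_{n-1}} S_p^\lambda = \bigoplus_{i} e_i S_p^\lambda$ as a direct sum of block components. The key combinatorial observation is that if $\mu$ is obtained from $\lambda$ by removing a removable $i$-node, then $cont_j(\mu) = cont_j(\lambda)$ for $j \neq i$ and $cont_i(\mu) = cont_i(\lambda) - 1$; by Nakayama's conjecture (\Cref{Nakayama's_Conjecture}), every composition factor of $S_p^\mu$ then lies in the block of $\mathbb{F}S_{n-1}$ corresponding to residue contents $(c_0, \ldots, c_i - 1, \ldots, c_{p-1})$, where $(c_0, \ldots, c_{p-1})$ is the block of $\lambda$. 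Consequently each filtration quotient $S_p^\mu$ lies entirely in the block associated to $e_i$ for the unique $i$ equal to the residue of the removed node.

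I would then pass from the filtration $\{V_j\}$ of $\res S_p^\lambda$ to the filtration of $e_i S_p^\lambda$ obtained by setting $V_j^{(i)}$ to be the projection of $V_j$ onto the $e_i$-summand (equivalently, the intersection $V_j \cap e_i S_p^\lambda$ via the block idempotent). Since each $V_j/V_{j-1} \cong S_p^\mu$ is already contained in a single block component, this projection either contributes the whole factor $S_p^\mu$ to $V_j^{(i)}/V_{j-1}^{(i)}$ (when the removed node has residue $i$) or contributes zero. Hence $V_\bullet^{(i)}$ is a filtration of $e_i S_p^\lambda$ whose nonzero quotients are precisely the $S_p^\mu$ coming from removing a removable $i$-node of $\lambda$, and the dominance ordering on these quotients is inherited from the ordering on $V_\bullet$.

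The only mildly technical point, and what I expect to be the main obstacle, is verifying that taking the block component really does preserve the filtration: this boils down to checking that block idempotents are central in $\mathbb{F}S_{n-1}$ and that each filtration piece $V_j$ is an $\mathbb{F}S_{n-1}$-submodule, so that multiplying by the central idempotent for the $e_i$-block commutes with the inclusions. Once this is in place, the required filtration of $e_i S_p^\lambda$ drops out immediately from James's branching filtration without any further combinatorics.
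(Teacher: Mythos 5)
Your argument is correct and is exactly the proof the paper has in mind: the paper gives no written proof for this theorem, simply citing James's Theorem 9.2 and Nakayama's conjecture, and your filling-in of the implicit argument (James's branching filtration, block decomposition of restriction, exactness of multiplication by the central block idempotent) is the intended one.
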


We state a partial version of \cite[Theorem E']{brundan2000translation} which tells us how $e_i$ acts on simple modules. 
\begin{theorem}\cite[Theorem E']{brundan2000translation}\label{e_i D^lambda}
    Let $\lambda \vdash n$ be a $p$-regular partition and $i \in \mathbb{Z}/p\mathbb{Z}.$ Then the module $e_i D_p^\lambda$ is
    \begin{enumerate}
        \item zero unless $\lambda$ has at least one normal node of residue $i;$
        \item irreducible if and only if there is a unique normal node of residue $i.$ 
    \end{enumerate}
\end{theorem}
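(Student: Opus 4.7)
The plan is to prove both parts by inducting on $n$, with Theorem \ref{e_i S^lambda} as the starting point.

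For part (1), suppose $\lambda$ has no normal $i$-nodes; I would show $e_i D_p^\lambda = 0$. Applying the exact functor $e_i$ to the short exact sequence $0 \to \Rad S_p^\lambda \to S_p^\lambda \to D_p^\lambda \to 0$, it suffices to show that the induced inclusion $e_i \Rad S_p^\lambda \hookrightarrow e_i S_p^\lambda$ is an isomorphism. By Theorem \ref{e_i S^lambda}, $e_i S_p^\lambda$ carries a Specht filtration with factors $S_p^\mu$, where $\mu$ ranges over partitions obtained from $\lambda$ by removing a removable $i$-node. When $\lambda$ has no normal $i$-nodes, every removable $i$-node is cancelled by an addable $i$-node further to the right; the plan is to exploit this combinatorial cancellation via the adjoint pair $(f_i, e_i)$ to realise each Specht factor as already lying in the image of $e_i \Rad S_p^\lambda$.

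For part (2), write $\epsilon_i(\lambda)$ for the number of normal $i$-nodes of $\lambda$ and $\tilde{e}_i\lambda$ for the partition obtained by removing the good $i$-node (when one exists). The self-duality of $D_p^\lambda$ together with the self-duality of restriction imply that $e_i D_p^\lambda$ is self-dual, hence $\Soc(e_i D_p^\lambda) \cong \Head(e_i D_p^\lambda)$. Assuming Kleshchev's modular branching rule, which identifies this socle as $\left(D_p^{\tilde{e}_i\lambda}\right)^{\oplus \epsilon_i(\lambda)}$, irreducibility of $e_i D_p^\lambda$ immediately forces $\epsilon_i(\lambda) = 1$. For the converse, when $\epsilon_i(\lambda) = 1$, I would show that $e_i D_p^\lambda$ has length $1$ via a global dimension count: summing $\dim e_j D_p^\lambda$ over all residues $j$, using $\res^{S_n}_{S_{n-1}} = \bigoplus_j e_j$, must recover $\dim D_p^\lambda$, and inducting on $n$ pins down the multiplicities on the right-hand side.

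The main obstacle is this converse direction, since $\Soc \cong \Head$ being simple does not on its own force the entire module to be simple. The technical core of the Brundan--Kleshchev proof uses a refined Jantzen-type filtration together with the action of divided-power operators $e_i^{(r)}$ to extract further composition factors and control their multiplicities; alternatively, one can identify $[e_i D_p^\lambda]$ in the Grothendieck group with the action of the Kashiwara operator on the canonical basis of the Fock space representation of $\widehat{\mathfrak{sl}}_p$, from which irreducibility when $\epsilon_i(\lambda) = 1$ follows by a character-theoretic comparison.
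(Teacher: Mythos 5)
The paper does not prove this statement: it is imported directly from the literature, as the label \cite[Theorem E']{brundan2000translation} indicates, and the surrounding text says only ``We state a partial version of \cite[Theorem E']{brundan2000translation} which tells us how $e_i$ acts on simple modules.'' There is therefore no internal proof to compare your sketch against, and your effort is aimed at reproving a quoted result rather than filling a gap in the paper.

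On its own terms the outline has real gaps, most of which you flag yourself. In part (1), ``exploit this combinatorial cancellation via the adjoint pair $(f_i,e_i)$'' is not yet an argument: the adjunction does not, by itself, tell you that the inclusion $e_i\Rad S_p^\lambda\hookrightarrow e_i S_p^\lambda$ is onto, and identifying which Specht factors of $e_i S_p^\lambda$ land in the image amounts to re-deriving the very branching theory you are after. In part (2), you assume Kleshchev's modular branching theorem to identify $\Soc(e_i D_p^\lambda)$, but that theorem is part of the same circle of results as Theorem~E$'$ itself, so this is close to circular; and, as you note, simple socle and head does not imply simple module. The ``global dimension count'' is precisely where the hard work in Brundan--Kleshchev sits (divided powers $e_i^{(r)}$, crystal operators, Jantzen-type filtrations), and the sketch stops short of carrying it out. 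Since the paper treats this as a black box, the appropriate thing to do here is the same: cite it, do not reprove it.
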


We now prove a result similar to \Cref{2_part_submod_structure} for the case that $p = 2$ and $\lambda$ is $2$-singular below.

\begin{corollary}\label{submod_structure_S(a_a)}
    Let $\lambda = (a,a) \vdash n.$ Then
    \begin{enumerate}
        \item $S^\lambda_2$ is multiplicity-free, and the set of composition factors is $\{D^{\nu_I(\lambda)} \, |\, I \in A_\lambda \setminus \{\emptyset\}\}$.
        \item Write $M_{\nu_I(\lambda)}$ for the smallest submodule of $S^\lambda_2$ that has $D^{\nu_I(\lambda)}$ as a composition factor. Then $M_{\nu_J(\lambda)} \supseteq M_{\nu_I(\lambda)}$ if and only if $J \subseteq I.$
    \end{enumerate}
\end{corollary}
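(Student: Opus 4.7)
My plan is to reduce the statement to the $2$-regular case (\Cref{2_part_submod_structure}) by realising $S^{(a,a)}_2$ as a submodule of a restriction of $S^{(a+1,a)}_2$ and transferring structure through the exact functor $e_i$. Both removable nodes of $(a+1,a)$ have residue $\bar{a} := a \bmod 2$, so $\operatorname{Res}^{S_{2a+1}}_{S_{2a}} S^{(a+1,a)}_2 = e_{\bar{a}} S^{(a+1,a)}_2$, and \Cref{e_i S^lambda} yields the short exact sequence
$$0 \longrightarrow S^{(a,a)}_2 \longrightarrow e_{\bar{a}} S^{(a+1,a)}_2 \longrightarrow S^{(a+1,a-1)}_2 \longrightarrow 0,$$
with $S^{(a+1,a-1)}_2$ as the top factor since $(a+1,a-1) \trianglerighteq (a,a)$.

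By \Cref{S_2^{(a+1_a)}}, $S^{(a+1,a)}_2$ is uniserial with composition factors $D^{\mu_k}_2$ where $\mu_k := (a + 2^k - 1,\, a - 2^k + 2)$ for $k \geq 1$ with $2^k - 2 \leq a$. Exactness of $e_{\bar{a}}$ produces a filtration of $e_{\bar{a}} S^{(a+1,a)}_2$ with factors $e_{\bar{a}} D^{\mu_k}_2$. Each $\mu_k$ has exactly two removable $\bar{a}$-nodes, one in each row, and a direct analysis via \Cref{e_i D^lambda} together with the full Brundan--Kleshchev branching rules shows that each $e_{\bar{a}} D^{\mu_k}_2$ has exactly two simple constituents, namely $D^{(a + 2^k - 2,\, a - 2^k + 2)}_2$ and $D^{(a + 2^k - 1,\, a - 2^k + 1)}_2 = D^{\nu_{[0,k)}((a,a))}_2$. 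Comparing with the composition factors of $S^{(a+1,a-1)}_2$ obtained from \Cref{2_part_submod_structure} applied with $\alpha = 3$, the first family accounts precisely for the composition factors of the quotient $S^{(a+1,a-1)}_2$, forcing the second family to appear in the submodule $S^{(a,a)}_2$ with multiplicity one each, establishing part (1).

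For part (2), the uniserial chain of submodules of $S^{(a+1,a)}_2$ transfers under $e_{\bar{a}}$ to a chain of submodules of $e_{\bar{a}} S^{(a+1,a)}_2$ whose successive quotients are the $e_{\bar{a}} D^{\mu_k}_2$. Intersecting this chain with the submodule $S^{(a,a)}_2$ yields a totally ordered chain of submodules of $S^{(a,a)}_2$ that realises each composition factor exactly once, and matching against $A_{(a,a)} \setminus \{\emptyset\}$, totally ordered by $[0,j) \subseteq [0,k) \iff j \leq k$, gives the claimed description $M_{\nu_J(\lambda)} \supseteq M_{\nu_I(\lambda)} \iff J \subseteq I$.

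The main obstacle I anticipate is the precise determination of each $e_{\bar{a}} D^{\mu_k}_2$ in the second step, since \Cref{e_i D^lambda} supplies only an irreducibility criterion while the head/socle analysis requires deeper branching results and splits into cases depending on the parity of $a$ (which controls whether a row-$3$ addable $\bar{a}$-node exists). A cleaner alternative would be to deduce part (1) directly from James's Theorem 24.15 (the empty set drops out precisely because $(a,a)$ is $2$-singular, so $D^{(a,a)}_2 = 0$), and then to establish part (2) via the restriction/intersection argument above, noting that a submodule of a module whose composition factor labels are totally ordered inherits a uniserial filtration.
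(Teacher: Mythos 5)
Your route goes in the opposite direction from the paper: you restrict the known uniserial module $S^{(a+1,a)}_2$ down to $S_{2a}$ and locate $S^{(a,a)}_2$ as a submodule, whereas the paper restricts $S^{(a,a)}_2$ itself down to $S_{2a-1}$, where it becomes $S^{(a,a-1)}_2$ (a partition of the form $(b+1,b)$ covered by \Cref{S_2^{(a+1_a)}}), and then observes that restriction is injective on the submodule lattice of $S^{(a,a)}_2$. Unfortunately your branching step contains an error. The composition factors of $e_i D^\lambda_2$ are not simply ``remove each normal $i$-node once'': \cite[Theorem E]{brundan2000translation} gives $[e_i D^\lambda : D^{\tilde e_i\lambda}] = \varepsilon_i(\lambda)$, so the good-node factor appears with multiplicity equal to the \emph{number} of normal $i$-nodes. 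Concretely, take $a=4$, $k=2$, so $\mu_2 = (7,2)$; then $e_0 D^{(7,2)}_2$ has dimension $26$, and in the Grothendieck group it equals $2[D^{(7,1)}_2] + [D^{(6,2)}_2]$ (dimensions $2\cdot 6 + 14 = 26$), not $[D^{(7,1)}_2]+[D^{(6,2)}_2]$ as you claim. Similarly for $k=1$ the supposed constituent $D^{(a+2^k-2,\,a-2^k+2)}_2 = D^{(a,a)}_2$ vanishes, and the correct class is $2[D^{(a+1,a-1)}_2]$. The subsequent matching is also off: with $\alpha=3$, the factors of $S^{(a+1,a-1)}_2$ come from $\emptyset$, $[0,j)$ \emph{and} $[1,j)$, i.e.\ from \emph{both} of your ``families'', so the first family alone does not account for the quotient $S^{(a+1,a-1)}_2$.

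Even if the class in the Grothendieck group were sorted out, part (2) requires the \emph{entire} submodule lattice, and your argument only produces one chain of submodules of $S^{(a,a)}_2$ (any composition series is such a chain). Nothing in the proposal rules out a submodule off the chain. Your fallback assertion, ``a submodule of a module whose composition factor labels are totally ordered inherits a uniserial filtration,'' is not a theorem and is false as stated; moreover $e_{\bar a} S^{(a+1,a)}_2$ is not even multiplicity-free (e.g.\ for $a=4$, both $D^{(5,3)}_2$ and $D^{(7,1)}_2$ occur twice), so ``totally ordered labels'' does not apply. The paper closes this gap by a counting argument: since $(a,a)$ has a unique removable node, $\Res^{S_{2a}}_{S_{2a-1}}S^{(a,a)}_2 \cong S^{(a,a-1)}_2$, this functor is exact, kills no composition factor of $S^{(a,a)}_2$, and therefore embeds its submodule lattice into that of the uniserial module $S^{(a,a-1)}_2$, forcing $S^{(a,a)}_2$ to have exactly $c+1$ submodules. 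I would encourage you to redo the argument in that direction.
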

\begin{proof}

By \cite[Theorem 24.15]{James1978}, $S_2^{(a,a)}$ and $S_2^{(a,a-1)}$ have the same number, say $c,$ of composition factors; namely $D_2^{(x,y)}$ is a composition factor of $S_2^{(a,a-1)}$ if and only if $D_2^{(x+1,y)}$ is a composition factor of $S_2^{(a,a)}.$ Take a composition series of $S_2^{(a,a)}, 0 \subsetneq M_1 \subsetneq M_2 \ldots \subsetneq M_c = S_2^{(a,a)},$ and restrict to $S_{2a-1}.$ The partition $(a,a)$ has one removable node, so the restriction of $S_2^{(a,a)}$ to $S_{2a-1}$ is isomorphic to $S_2^{(a,a-1)},$ and our composition series restricts to a chain  of length $c$ of distinct submodules of $S_2^{(a,a-1)};$ this must also be a composition series for $S_2^{(a,a-1)}.$ So if $T_i := M_i/M_{i-1},$ then the restriction of $T_i$ to $S_{2a-1}$ is also simple.

More specifically, let $i$ be the residue of the removable node of $(a,a),$ that is, $i$ is $a \mod 2.$ Let $T$ be a composition factor, and write $\res(T)$ for the restriction of $T$ to $S_{2a-1}.$ Then $\res(T)$ is the direct sum of $e_0T$ and $e_1T.$ $S_2^{(a,a)}$ has no removable nodes with residue $1-i,$ so $e_{1-i}S_2^{(a,a)} = 0$ and $e_{1-i}T = 0$ too, and $e_i T = \res(T)$ is simple. Similarly, $e_i^2S_2^{(a,a)} = 0$ and hence $e_i^2T = 0$ too. So $T$ is isomorphic to some $D_2^\mu$ where $\mu$ is a $2$-regular partition with exactly $1$ normal $i$-node and no normal $(1-i)$-nodes, and therefore $\res(T) = e_i D_2^{(x,y)} = D_2^{(x-1,y)}$ by counting the number of normal $i$-nodes, counting the number of normal $(1-i)$-nodes, and using \Cref{e_i D^lambda}.

$S_2^{(a,a)}$ has $c$ composition factors, so it has at least $c+1$ submodules. Distinct submodules of $S_2^{(a,a)}$ restrict to distinct submodules in $S_2^{(a,a-1)},$ and $S_2^{(a,a-1)}$ is uniserial by \Cref{submod_structure_S(a_a)} so has exactly $c+1$ submodules, hence $S_2^{(a,a)}$ must have at most $c+1$ submodules. Hence the number of submodules is the same, and the restriction functor is an isomorphism between the submodule lattices of $S_2^{(a,a)}$ and $S_2^{(a,a-1)}.$ 

In the case $\lambda = (a,a-1),$ the set of composition factors for $S_2^{(a,a-1)}$ is $\{D_2^{(a+2^i-2, a-2^i+1)} \, | \, 0 \leq 2^i-2 \leq a-1\},$ and the submodules are of the form $M_{(a+2^i-2, a-2^i+1)}$ which have simple head isomorphic to $D^{(a+2^i-2,a-2^i+1)}_2,$ and $M_{(a+2^i-2, a-2^i+1)} \subset M_{(a+2^j-2, a-2^j+1)}$ if and only if $i > j.$

    For $\lambda = (a,a),$ the set of composition factors for $S_2^{(a,a)}$ is $\{D_2^{(a+2^i-1, a-2^i+1)} \, | \, 0 \leq 2^i-2 \leq a-1\},$ and the submodules are of the form $M_{(a+2^i-1, a-2^i+1)}$ which have simple head isomorphic to $D^{(a+2^i-1,a-2^i+1)}_2,$ and $M_{(a+2^i-1, a-2^i+1)} \subset M_{(a+2^j-1, a-2^j+1)}$ if and only if $i > j.$ This is the same as the combinatorial description given by \Cref{2_part_submod_structure} but excluding $\emptyset$ from $A_{(a,a)}$.

\end{proof}

Because $S_p^\lambda$ is multiplicity free when $\lambda$ is a $2$-part partition  \cite[Theorem 24.15]{James1978} with part two of \Cref{2_part_submod_structure} \Cref{submod_structure_S(a_a)} provides the complete submodule lattice.

We provide an alternative reformulation of \Cref{2_part_submod_structure} below. We will first need some notation and a few small results.

\begin{definition}\label{contains}
    Let $a$ and $b$ be non-negative integers. Let $a = \sum_{i\geq 0}a_ip^i$ and $b=\sum_{j\geq 0}b_jp^j$ be their $p$-adic expansions. We write $a \supseteq_p b$ \emph{``$a$ contains $b$''} if and only if $b_i \in \{0, a_i\}$ for every $i$. We may write $a \supseteq b$ if $p$ is clear from context.
\end{definition}

\begin{lemma}
    Let $p$ be a prime and $\lambda = (\lambda_1, \lambda_2) \vdash n.$ Define $\alpha = \lambda_1 -\lambda_2 + 1,$ and let $0 \leq d \leq \lambda_2$ such that $(\lambda_1 + d, \lambda_2 - d)$ is $p$-regular. Then the multiplicity of $D^{(\lambda_1 + d, \lambda_2 -d)}_p$ in $S_p^{\lambda}$ is $1$ if $\alpha + 2d \supseteq_p d;$ and is $0$ otherwise.
\end{lemma}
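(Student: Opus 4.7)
The plan is to invoke \Cref{2_part_submod_structure} and \Cref{submod_structure_S(a_a)} to reduce the lemma to a combinatorial claim about $\delta_I$, and then prove that claim via direct base-$p$ digit analysis. Those results give that $S_p^\lambda$ is multiplicity-free with composition factors $\{D_p^{\nu_I(\lambda)} : I \in A_\lambda\}$ (the $I = \emptyset$ contribution is absent precisely when $(\lambda_1, \lambda_2)$ fails to be $p$-regular, consistent with the lemma's $p$-regularity hypothesis on $(\lambda_1+d, \lambda_2-d)$). Since $0 \leq d \leq \lambda_2$, the inequality $\delta_I \leq \lambda_2$ distinguishing $A_\lambda$ from $\hat{A}_\lambda$ is automatic. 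So it will suffice to prove the combinatorial equivalence: there exists $I \in \hat{A}_\lambda$ with $\delta_I = d$ if and only if $\alpha + 2d \supseteq_p d$.

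For the forward implication, I would compute $\alpha + 2\delta_I$ in base $p$ explicitly. For a single interval $I = [i_1, i_2)$ the $p$-adic digits of $\delta_I$ read off directly from the definition: $p - \alpha_{i_1}$ at position $i_1$, $p - 1 - \alpha_j$ at $i_1 < j < i_2$, and $0$ elsewhere. Tracking carries in the sum $\alpha + 2\delta_I$ shows its digits are $p - \alpha_{i_1}$ at $i_1$, $p - 1 - \alpha_j$ at $i_1 < j < i_2$, $\alpha_{i_2} + 1$ at $i_2$, and $\alpha_j$ elsewhere. A position-by-position comparison verifies $\delta_I \subseteq_p \alpha + 2\delta_I$. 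For $I$ a union of several intervals, the separation $i_{2k} < i_{2k+1}$ ensures no carry propagates from one interval to the next, so the single-interval analysis extends additively.

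For the converse, given $d$ with $d \subseteq_p c := \alpha + 2d$, I would construct $I$ greedily. Let $i_1$ be the smallest position at which $d$ has a nonzero digit; since all lower digits of $d$ vanish, there is no carry into position $i_1$, and the identity $c_{i_1} \equiv \alpha_{i_1} + 2d_{i_1} \pmod p$ together with $d_{i_1} = c_{i_1} \neq 0$ forces $d_{i_1} = p - \alpha_{i_1}$, giving $\alpha_{i_1} \neq 0$ and so $i_1 \in B_\lambda^-$. I would then let $i_2$ be the smallest position $> i_1$ at which the relation $d_j = p - 1 - \alpha_j$ fails; a short carry-tracking argument shows $\alpha_{i_2} \neq p - 1$, and hence $i_2 \in B_\lambda^+$. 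Subtracting $\delta_{[i_1, i_2)}$ from $d$ yields a residual $d'$ with the same containment property and with first nonzero digit strictly above $i_2$, so iterating builds up the desired $I \in \hat{A}_\lambda$.

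The main obstacle is the careful bookkeeping of base-$p$ carries when $p > 2$: the key technical fact to establish is that all carries generated within the range of a single interval $[i_1, i_2]$ cancel out by position $i_2$, keeping the analysis local to each interval. For $p = 2$ this becomes especially transparent: the bits of $\alpha + 2\delta_I$ are obtained from those of $\alpha$ simply by flipping the bits in $\bigcup_k (i_{2k-1}, i_{2k}]$.
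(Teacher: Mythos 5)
Your argument is correct, but it takes a genuinely different route from the paper. The paper's proof is a one-line citation to James's Theorem 24.15 (the classical formula for the decomposition numbers of two-part Specht modules), with a remark that the statement is just a notational rewriting of that result. You instead derive the lemma from the Kleshchev--Sheth formulation already recorded in \Cref{2_part_submod_structure} (and its $2$-singular companion \Cref{submod_structure_S(a_a)}), by establishing directly that there is an $I \in \hat{A}_\lambda$ with $\delta_I = d$ if and only if $\alpha + 2d \supseteq_p d$. Your digit computation of $\alpha + 2\delta_I$ is right --- one clean way to see it, which avoids the factor of $2$ carrying headaches for odd $p$, is to add $\delta_I$ to $\alpha$ first (obtaining digits $0$ on $[i_1, i_2)$, $\alpha_{i_2}+1$ at $i_2$, and $\alpha_j$ elsewhere, with all carries absorbed because $\alpha_{i_2} \neq p-1$), and then add $\delta_I$ again, which now produces no carries at all --- and the greedy reconstruction of $I$ from $d \subseteq_p \alpha + 2d$ is sound: the condition $d_{i_1} = c_{i_1} \neq 0$ forces $\alpha_{i_1} \neq 0$ and $d_{i_1} = p - \alpha_{i_1}$, the running carry of $1$ propagates exactly while $d_j = p-1-\alpha_j$, and at the first failure one is forced into $d_{i_2} = 0$ and $\alpha_{i_2} \neq p-1$, so the iteration is well-founded. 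Your approach is longer than the paper's but is self-contained relative to the results already quoted, and the digit-tracking technique you develop is essentially the same one the paper deploys immediately afterwards to prove the containment characterization $I \subseteq J \iff \alpha + \delta_I + \delta_J \supseteq_p \delta_I$; the paper's route buys brevity by deferring to the canonical source, while yours buys a uniform, purely combinatorial treatment that does not require re-translating James's conventions.
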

\begin{proof}
    This follows from \cite[Theorem 24.15]{James1978}, after rewriting some notation. 
\end{proof}

\begin{corollary}
    Let $p$ be a prime and $\lambda = (\lambda_1, \lambda_2) \vdash n.$ Define $\alpha = \lambda_1 -\lambda_2 + 1,$ and define $A_\lambda$ as in \Cref{2_part_submod_structure}. Then for all $I, J \in A_\lambda, I \subseteq J$ if and only if $\alpha + \delta_J + \delta_I \supseteq_p \delta_I.$
\end{corollary}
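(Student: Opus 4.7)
The plan is to reduce the containment $\alpha + \delta_J + \delta_I \supseteq_p \delta_I$ to a disjoint $p$-adic support condition, then interpret that condition combinatorially via the interval structure of $I$ and $J$. First I establish the auxiliary fact that for non-negative integers $a$ and $b$, $a + b \supseteq_p b$ holds if and only if the $p$-adic supports $\{i : a_i \neq 0\}$ and $\{i : b_i \neq 0\}$ are disjoint. The easy direction is that disjoint supports force $a_i + b_i \leq p-1$ at every position, so no carries arise and $(a+b)_i = b_i$ at every $i$ with $b_i \neq 0$. For the converse, I take the minimal $i$ at which both digits are nonzero; minimality kills carries propagating from below $i$, and the digit of $a+b$ at $i$ is either $a_i + b_i > b_i$ (no overflow) or $a_i + b_i - p < b_i$ (overflow). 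In either case the digit is nonzero and unequal to $b_i$, so containment fails.

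Applying this with $a = \alpha + \delta_J$ and $b = \delta_I$, the problem reduces to computing the two $p$-adic supports. For $\delta_I$ the defining formula gives digit $p - 1 - \alpha_i$ on $I \setminus E_I$, writing $E_I = \{i_1, i_3, \ldots, i_{2t-1}\}$, and digit $p - \alpha_i$ on $E_I$; since $E_I \subseteq B^-_\lambda$ forces $\alpha_{i_{2l-1}} \geq 1$, each such digit is nonzero, and hence $\operatorname{supp}_p(\delta_I) = I$. For $\alpha + \delta_J$ the carry generated at each start $j_{2m-1} \in E_J$ (where the bare column sum equals $p$) propagates through the $J$-interval $[j_{2m-1}, j_{2m})$ zeroing each digit and terminates at $j_{2m} \in B^+_\lambda$, where $\alpha_{j_{2m}} + 1 \leq p - 1$ absorbs the final carry. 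The resulting digits of $\alpha + \delta_J$ are $0$ on $J$, $\alpha_i + 1$ on $U_J := \{j_2, j_4, \ldots, j_{2s}\}$, and $\alpha_i$ otherwise, giving $\operatorname{supp}_p(\alpha + \delta_J) = U_J \cup (B^-_\lambda \setminus (J \cup U_J))$.

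Disjointness of these two supports is equivalent to the pair of conditions (a) $U_J \cap I = \emptyset$ and (b) $I \cap B^-_\lambda \subseteq J$. If $I \subseteq J$ both hold immediately from $U_J \cap J = \emptyset$. Conversely, assume (a) and (b) and pick some $i_0 \in I \setminus J$; condition (b) forces $i_0 \notin B^-_\lambda$, so $i_0$ is not the start of the $I$-interval $[i_{2l-1}, i_{2l})$ containing it. Then $i_{2l-1} \in E_I \subseteq B^-_\lambda \cap I \subseteq J$ by (b), and walking upwards from $i_{2l-1} \in J$ to $i_0 \notin J$ inside $I$, some $k+1 \in (i_{2l-1}, i_0]$ must satisfy $k \in J$ and $k+1 \notin J$; such $k+1$ is then a $J$-upper endpoint lying in $I$, contradicting (a). Hence $I \subseteq J$. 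The main obstacle I anticipate is the carry bookkeeping in the second step, but the interval constraints $E_J \subseteq B^-_\lambda$ and $U_J \subseteq B^+_\lambda$ are exactly what keep carries confined to individual $J$-intervals, so the computation stays clean.
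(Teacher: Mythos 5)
The reduction to disjoint $p$-adic supports is a clean idea, and your auxiliary lemma is correct, but the step ``$\operatorname{supp}_p(\delta_I) = I$'' is false in general, and this breaks the reverse implication. The digit of $\delta_I$ at an interior position $i \in I \setminus E_I$ is $p-1-\alpha_i$, which vanishes whenever $\alpha_i = p-1$; the definition of $\hat A_\lambda$ only constrains the digits at the \emph{endpoints} $i_{2l-1}$ and $i_{2l}$, not at interior points (for example with $p=3$, $\alpha = 16 = (121)_3$ and $I = [0,2)$ one gets $\delta_I = 2$, whose support $\{0\}$ is a proper subset of $I$). Consequently, disjointness of the two supports is equivalent only to your condition (a) together with the weaker condition $(\mathrm b')\colon \operatorname{supp}_p(\delta_I)\cap B^-_\lambda \subseteq J$, not your (b). Your chain therefore establishes $I \subseteq J \Rightarrow \alpha + \delta_J + \delta_I \supseteq_p \delta_I$, but the converse is not proved as written, since you derive ``(a) and (b) $\Rightarrow I \subseteq J$'' but only ``(a) and $(\mathrm b')$'' is available from the $\supseteq_p$ hypothesis.

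The repair stays entirely inside your framework. You only need two facts: $E_I \subseteq \operatorname{supp}_p(\delta_I) \cap B^-_\lambda$ (the digit at $i_{2l-1}$ is $p - \alpha_{i_{2l-1}} \neq 0$), and $\operatorname{supp}_p(\delta_I) \cap U_J = I \cap U_J$ (because $U_J \subseteq B^+_\lambda$ forces $\alpha_j \neq p-1$, hence a nonzero digit, at any $j \in U_J \cap I$). From $(\mathrm b')$ one then gets $E_I \subseteq J$, so if $i_0 \in I \setminus J$ it follows \emph{directly} that $i_0 \notin E_I$ — you do not need the intermediate claim $i_0 \notin B^-_\lambda$ — and $i_{2l-1}\in E_I \subseteq J$, after which your walking argument goes through verbatim. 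This is in effect the same case analysis the paper carries out by choosing $i_0$ to be the \emph{minimal} element of $I\setminus J$ (minimality rules out the interior case with $\alpha_{i_0}=p-1$ because $i_0$ is then forced to lie in $U_J \subseteq B^+_\lambda$); your version, once corrected, avoids minimality at the cost of tracking the support of $\delta_I$ more carefully, and the disjoint-support lemma is a genuinely tidier way to handle the carries than the digit-by-digit bookkeeping in the paper's proof.
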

\begin{proof}

    If $I = \emptyset$ then this is clear. Now assume $I \neq \emptyset.$
    
  We first show that if $I\subseteq J$ then $\alpha + \delta_J + \delta_I \supseteq_p \delta_I.$ Recall that we can write the $p$-adic expansion \[\alpha = \sum_{u \geq 0} \alpha_u p^u,\] $B^{-}_\lambda := \{u \, | \, \alpha_u \neq 0\},$ and $B^+_{\lambda} := \{u \, | \, \alpha_u \neq p-1\}.$  
  
  As $I$ and $J$ are both non-empty, we can write $I = \bigcup_{k=1}^s [i_{2k-1}, i_{2k})$ for $i_{2k-1} \in B^-_\lambda$ and $i_{2k} \in B^+_\lambda,$ and we can write $J = \bigcup_{l=1}^t [j_{2l-1}, j_{2l}),$ for $j_{2l-1} \in B^-_\lambda$ and $j_{2l} \in B^+_\lambda.$ Then we have 
    \[\delta_I = \sum_{i \in I}(p-1-\alpha_i)p^i + \sum_{k=1}^s p^{i_{2k-1}}\] and 
    \[\delta_J = \sum_{j \in J}(p-1-\alpha_j)p^j + \sum_{l=1}^t p^{j_{2l-1}}.\] 
    
    We calculate $\alpha + \delta_J = \alpha + \sum_{j \in J}(p-1-\alpha_j)p^j + \sum_{l=1}^t p^{j_{2l-1}}.$ Consider the $p$-adic expansion of $\alpha + \sum_{j \in J}(p-1-\alpha_j)p^j.$ This has $p-1$ in position $j$ for $j \in J,$ and $\alpha_u$ in position $u$ for $u \not\in J.$ Then the $p$-adic expansion of $\alpha + \sum_{j \in J}(p-1-\alpha_j)p^j + \sum_{l=1}^t p^{j_{2l-1}}$ has $0$ in position $j,$ for $j \in J;$ $\alpha_{j_{2l}}+1$ for $1 \leq l \leq t;$ and has $\alpha_u$ in position $u$ for all other $u.$ Note that $\alpha_{j_{2l}} \neq p-1$ by definition, and so $\alpha_{j_{2l}}+1$ does not affect the $(j_{2l}+1)$st entry. Hence $\alpha + \delta_J + \delta_I \supseteq_p \delta_I.$

    Now to show that if $I \neq \emptyset$ and $\alpha + \delta_J + \delta_I \supseteq_p \delta_I$ then $I \subseteq J.$ We prove the contrapositive: assume $I \not\subseteq J.$ If $J = \emptyset,$ we want to show that $\alpha + \delta_I \not\supseteq_p \delta_I,$ but this follows as the $i_1$th entry in $\delta_I$ is $(p-\alpha_{i_1}) \neq 0$ as $\alpha_{i_1} \neq 0,$ and the $i_1$th entry in $\alpha + \delta_I$ is $0.$ 
    
    Now assume $J \neq \emptyset$ and let $i$ be the smallest non-negative integer such that $i \in I, i \not\in J.$ 
    
    If $i = i_{2k-1}$ for some $k,$ then the $i_{2k-1}$st entry of $\delta_I$ is $p-\alpha_{i_{2k-1}}.$ If also $i_{2k-1} = j_{2l}$ for some $l,$ then the $i_{2k-1}$st entry of $\alpha + \delta_J + \delta_I$ is $1,$ but $p-\alpha_{i_{2k-1}} \neq 1$ as $\alpha_{j_{2l}}\neq p-1.$ If $i_{2k-1} \neq j_{2l}$ for any $l,$ then the $i_{2k-1}$st entry of $\alpha + \delta_J + \delta_I$ is $0,$ but $p-\alpha_{i_{2k-1}} \neq 0$ as $\alpha_{i_{2k-1}} \neq 0.$ Hence, if $I \not\subseteq J,$ then $\alpha + \delta_J + \delta_I \not\supseteq_p \delta_I,$ as required.

    If $i \in I, i \not\in J, i \neq i_{2k-1},$ then the $i$th entry of $\delta_I$ is $p-1-\alpha_i.$ As $i$ is minimal, we must also have that $i = j_{2l}$ for some $l,$ then the $i$th entry of $\alpha + \delta_J + \delta_I$ is $0,$ but $\alpha_{j_{2l}} \neq p-1.$ Hence, if $I \not\subseteq J,$ then $\alpha + \delta_J + \delta_I \not\supseteq_p \delta_I,$ as required. 
    
\end{proof}

We can now reformulate the statement of \Cref{2_part_submod_structure} without referring to elements of $A_\lambda.$

\begin{corollary}\label{2_part_submod_structure_new_proof}
    Let $\lambda = (\lambda_1, \lambda_2) \vdash n.$ Let $S^\lambda_p$ be the Specht module corresponding to $\lambda$ in characteristic $p.$ Then 
    \begin{enumerate}
        \item $S^\lambda_p$ is multiplicity-free, and the set of composition factors is $\{D^{(\lambda_1 + d, \lambda_2 -d)}_p \,|\, 0 \leq d \leq \lambda_2, (\lambda_1+d,\lambda_2-d) \text{ is $p$-regular}, \alpha + 2d \supseteq_p d \}$.
        \item Write $M_d$ for the smallest submodule of $S_p^\lambda$ that has $D^{(\lambda_1+d, \lambda_2-d)}_p$ as a composition factor. Then $M_{d_1} \supseteq M_{d_2}$ if and only if $\alpha + d_1 + d_2 \supseteq_p d_1.$
    \end{enumerate}
\end{corollary}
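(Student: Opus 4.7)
The plan is to deduce this reformulation from Theorem \ref{2_part_submod_structure} and Corollary \ref{submod_structure_S(a_a)} by translating the parameterisation of composition factors and submodules from $I \in A_\lambda$ into the $p$-adic containment language involving $d$, using the two preceding unnamed results (the multiplicity lemma and the previous corollary).

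For part (1), I would combine Theorem \ref{2_part_submod_structure} with Corollary \ref{submod_structure_S(a_a)} to see that $S_p^\lambda$ is multiplicity-free for any $2$-part partition $\lambda$ (in the $p$-regular case directly, and in the only remaining $p$-singular case $p=2$, $\lambda = (a,a)$ via the corollary). The preceding multiplicity lemma then identifies precisely which composition factors appear: the multiplicity of $D_p^{(\lambda_1+d, \lambda_2-d)}$ is $1$ exactly when $(\lambda_1+d, \lambda_2-d)$ is $p$-regular and $\alpha+2d \supseteq_p d$, and $0$ otherwise. This gives the composition factor set in the stated form, and implicitly establishes that $I \mapsto \delta_I$ is a bijection between $A_\lambda$ (or $A_\lambda \setminus \{\emptyset\}$ in the $p=2$, $\lambda=(a,a)$ case) and the new indexing set $\{d : 0 \leq d \leq \lambda_2, (\lambda_1+d, \lambda_2-d) \text{ is $p$-regular}, \alpha+2d \supseteq_p d\}$.

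For part (2), under this bijection the submodule $M_d$ in the new notation is identified with $M_{\nu_I(\lambda)}$ from Theorem \ref{2_part_submod_structure} whenever $d = \delta_I$, since both are characterised as the smallest submodule of $S_p^\lambda$ containing $D_p^{(\lambda_1+d, \lambda_2-d)}$ as a composition factor. Part (2) of Theorem \ref{2_part_submod_structure} says $M_{\nu_J(\lambda)} \supseteq M_{\nu_I(\lambda)}$ if and only if $J \subseteq I$, while the preceding corollary says $J \subseteq I$ if and only if $\alpha + \delta_I + \delta_J \supseteq_p \delta_J$. Writing $d_1 = \delta_J$ and $d_2 = \delta_I$, this translates directly into $\alpha + d_1 + d_2 \supseteq_p d_1$, as required.

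The hard work is really done by the two preceding results, so the main obstacle here is bookkeeping: confirming that the $p=2$, $\lambda=(a,a)$ case is consistent with the uniform statement. In this case $(\lambda_1, \lambda_2) = (a,a)$ itself is the only $2$-singular partition reachable via $d=0$, so the $p$-regularity condition automatically excludes $d=0$ from the new indexing set, matching the exclusion of $\emptyset$ from $A_\lambda$ in Corollary \ref{submod_structure_S(a_a)}. Once this edge case is verified, the proof reduces to the formal translation sketched above.
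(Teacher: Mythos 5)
Your proposal is correct and follows exactly the route the paper intends: the paper states this corollary without proof, immediately after establishing the multiplicity lemma and the unnamed corollary translating $I \subseteq J$ into $\alpha + \delta_J + \delta_I \supseteq_p \delta_I$, so the reformulation is understood to be a direct bookkeeping consequence of those two results together with Theorem~\ref{2_part_submod_structure} and Corollary~\ref{submod_structure_S(a_a)}. Your handling of the $p=2$, $\lambda=(a,a)$ edge case (noting that $\alpha + 2 \cdot 0 \supseteq_p 0$ holds trivially, so it is the $p$-regularity condition on $(\lambda_1 + d, \lambda_2 - d)$ at $d=0$ that replaces the explicit exclusion of $\emptyset$ from $A_{(a,a)}$) is the one point that genuinely needed checking, and you check it correctly.
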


We may still use the construction of $A_\lambda$ as in \Cref{2_part_submod_structure} to prove further results if it is more convenient to do so. 

We will now prove some results about the submodule structure of $2$-part Specht modules. For the remainder of this section, we will assume that $p=2$ unless otherwise stated, though some results can be generalised to odd primes. We now give a result on when $2$-part Specht modules in characteristic $2$ are uniserial. First we give some preliminary results.

\begin{lemma}\label{one_subset}
    Let $\lambda = (\lambda_1, \lambda_2) \vdash n,$ let $\alpha = \lambda_1 - \lambda_2 + 1$ and let and $A_\lambda$ be the family of subsets as defined earlier in this section. 

    If a subset $[i_1, i_2) \cup [i_3, i_4)\cup\ldots\cup[i_{2r-1},i_{2r}) \in A_\lambda,$ then $[i_1, i_{2r}) \in A_\lambda.$  
\end{lemma}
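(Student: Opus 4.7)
The plan is to set $J := [i_1, i_{2r})$ and verify directly that $J \in A_\lambda$. Membership in $\hat{A}_\lambda$ is immediate: since $J$ is a single half-open interval, one only needs $i_1 \in B_\lambda^-$ and $i_{2r} \in B_\lambda^+$, both of which are supplied by the hypothesis that the given union belongs to $\hat{A}_\lambda$. The substantive content is the inequality $\delta_J \leq \lambda_2$, and the cleanest route is to establish the stronger comparison $\delta_J \leq \delta_I$; since $I \in A_\lambda$ already gives $\delta_I \leq \lambda_2$, the conclusion follows. The case $r = 1$ is trivial because then $I = J$, so I would assume $r \geq 2$.

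For the key comparison, my approach is a direct computation. Decomposing the interval $J$ as the disjoint union $I \cup \bigcup_{j=1}^{r-1}[i_{2j}, i_{2j+1})$ lets the $\sum_{i \in I}(p-1-\alpha_i)p^i$ terms in $\delta_I$ and $\delta_J$ cancel, while the single $p^{i_1}$ summand of $\delta_J$ cancels the $j=1$ contribution to $\sum_{j=1}^r p^{i_{2j-1}}$ in $\delta_I$. What remains is
$$\delta_I - \delta_J = \sum_{j=2}^r p^{i_{2j-1}} - \sum_{j=1}^{r-1}\sum_{i=i_{2j}}^{i_{2j+1}-1}(p-1-\alpha_i)p^i.$$
Bounding the inner sum crudely via $p-1-\alpha_i \leq p-1$ yields $\sum_{i=i_{2j}}^{i_{2j+1}-1}(p-1-\alpha_i)p^i \leq p^{i_{2j+1}} - p^{i_{2j}}$, so
$$\delta_I - \delta_J \;\geq\; \sum_{j=2}^r p^{i_{2j-1}} - \sum_{j=1}^{r-1}\left(p^{i_{2j+1}} - p^{i_{2j}}\right).$$
Reindexing $j \mapsto j-1$ in the sum $\sum_{j=1}^{r-1}p^{i_{2j+1}}$ shows that the two sums of odd-index powers telescope against each other, leaving $\delta_I - \delta_J \geq \sum_{j=1}^{r-1} p^{i_{2j}} > 0$.

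The argument is entirely elementary; the only real care needed is the bookkeeping of the cancellations and spotting the telescoping after the reindex, so I do not expect a genuine obstacle. Assembling the pieces yields $\delta_J < \delta_I \leq \lambda_2$, establishing $J \in A_\lambda$ and completing the proof. It is worth noting that no use is made of $p = 2$, so the lemma holds for arbitrary primes, which may be convenient in later sections that do specialise to characteristic $2$.
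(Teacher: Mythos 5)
Your proof is correct and follows essentially the same line as the paper: decompose $J = [i_1,i_{2r})$ against $I$, compare $\delta_I - \delta_J$, and bound the gap intervals $[i_{2j},i_{2j+1})$ crudely by $(p-1)\sum p^i = p^{i_{2j+1}}-p^{i_{2j}}$, which telescopes to give $\delta_J \le \delta_I - \sum_j p^{i_{2j}} < \delta_I \le \lambda_2$; the paper packages exactly this comparison into the quantities $j_k = 2^{i_{2k}}$ but states the key inequality without spelling it out, so you have supplied the missing bookkeeping. You also correctly flag the (easy but necessary) check that $J \in \hat{A}_\lambda$, i.e. $i_1 \in B_\lambda^-$ and $i_{2r} \in B_\lambda^+$, and you are right that nothing here uses $p=2$, so the argument is valid for any prime.
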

\begin{proof}
    It suffices to show that $\delta([i_1, i_{2r})) \leq \lambda_2.$ Write $I = [i_1, i_2) \cup [i_3, i_4)\cup\ldots\cup[i_{2r-1},i_{2r}),$ and for $1 \leq k \leq r-1,$ let \[j_k := 2^{i_{2k+1}} - \sum_{i = i_{2k}}^{i_{2k+1}-1}2^i > 0.\]
    
    Then the result follows as
    \begin{align*}
        \delta([i_1, i_{2r})) &\leq \delta(I) - \sum_{k=1}^{r-1} j_k \\
        & \leq \delta(I) \\
        & \leq \lambda_2.
    \end{align*}
\end{proof}

\begin{lemma}\label{subset_extend}
    Let $\lambda = (\lambda_1, \lambda_2) \vdash n,$ let $\alpha = \lambda_1 - \lambda_2 + 1$ and let $A_\lambda$ be the family of subsets as defined earlier in this section. 

    If a subset $[i, j) \in A_\lambda,$ then $[\nu_2(\alpha), j) \in A_\lambda.$ 
\end{lemma}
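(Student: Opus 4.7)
Write $m := \nu_2(\alpha)$, and observe that by the hypothesis $[i,j) \in A_\lambda$ we already know $i \in B_\lambda^-$ (so $\alpha_i \neq 0$, forcing $i \geq m$) and $j \in B_\lambda^+$. The plan is to verify that $[m,j)$ satisfies both requirements to lie in $A_\lambda$: the structural condition (endpoints in the correct $B_\lambda^\pm$ sets) and the size condition $\delta_{[m,j)} \leq \lambda_2$. The structural condition is automatic, since $m \in B_\lambda^-$ by the definition of the $2$-adic valuation ($\alpha_m$ is the first nonzero digit), and $j \in B_\lambda^+$ is inherited from the hypothesis. Everything therefore reduces to comparing $\delta_{[m,j)}$ with the known bound $\delta_{[i,j)} \leq \lambda_2$.

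For this comparison, I would write out both $\delta$-values explicitly using the formula in the setup:
\[
\delta_{[i,j)} = \sum_{k=i}^{j-1}(1-\alpha_k)2^k + 2^i, \qquad \delta_{[m,j)} = \sum_{k=m}^{j-1}(1-\alpha_k)2^k + 2^m,
\]
so that
\[
\delta_{[i,j)} - \delta_{[m,j)} = 2^i - 2^m - \sum_{k=m}^{i-1}(1-\alpha_k)2^k.
\]
Since $p=2$ the factor $(1-\alpha_k)$ is $1$ exactly when $\alpha_k = 0$, so the sum picks out the "zero-digit" positions of $\alpha$ in $[m,i)$. Using the identity $\sum_{k=m}^{i-1}2^k = 2^i - 2^m$ to split this range into zero-digit and one-digit positions, the right-hand side collapses to $\sum_{k \in [m,i),\ \alpha_k = 1} 2^k \geq 0$. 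Hence $\delta_{[m,j)} \leq \delta_{[i,j)} \leq \lambda_2$, which completes the argument.

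There is no serious obstacle here; the only care needed is the $2$-adic bookkeeping in the final display, in particular noting that the term $2^m$ in $\delta_{[m,j)}$ is the "extra" contribution from $i_{2k-1} = m$ and combines cleanly with the summed contribution from positions where $\alpha_k = 0$ between $m$ and $i$.
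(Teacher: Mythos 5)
Your proposal is correct and takes essentially the same approach as the paper: both proofs reduce to the inequality $\delta([\nu_2(\alpha), j)) \leq \delta([i,j))$ by expanding the definition of $\delta$ and noting that extending the interval leftward to $\nu_2(\alpha)$ cannot increase the value. Your version computes the exact difference $\delta([i,j)) - \delta([\nu_2(\alpha),j)) = \sum_{k\in[\nu_2(\alpha),i),\,\alpha_k=1}2^k \geq 0$ rather than only an upper bound, but this is a cosmetic refinement of the same calculation.
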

\begin{proof}
    If $i = \nu_2(\alpha)$ then there is nothing to prove. Else $\nu_2(\alpha) < i$ and 
    \begin{align*}
    \delta([\nu_2(\alpha), j)) &\leq \delta([i, j)) - 2^{i} + \sum_{k = \nu_2(\alpha)}^{i-1} 2^k
    \\& \leq \delta([i, j))
    \\& \leq \lambda_2,
    \end{align*}
    as required.
\end{proof}

\begin{theorem}\label{2_part_uniserial}
    Let $\lambda = (\lambda_1, \lambda_2) \vdash n.$ If $\alpha := \lambda_1 - \lambda_2 + 1$ has at least two non-zero digits in its binary expansion, define $a := \nu_2(\alpha), b := \nu_2(\alpha + 2^{\nu_2(\alpha)}),$ and $ c := \nu_2(\alpha - 2^{\nu_2(\alpha)}).$ That is, $a$ is the first $1$ in the binary expansion of $\alpha,$ $b$ is the first $0$ after $a,$ and $c$ is the first $1$ after $a.$
   
    Then $S^\lambda_2$ is uniserial if and only if 
    \begin{enumerate}
        \item $\alpha$ is a power of two or;
        \item $\alpha$ is not a power of two and $c > b$ and $2^c > \lambda_2$ or;
        \item $\alpha$ is not a power of two and $c < b$ and $2^c + 2^b > \lambda_2.$
    \end{enumerate}
\end{theorem}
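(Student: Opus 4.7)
My plan is to translate uniseriality of $S^\lambda_2$ into the statement that the poset $A_\lambda$, ordered by inclusion, is a chain. By \Cref{2_part_submod_structure} in the $2$-regular case $\lambda_1 > \lambda_2$ and \Cref{submod_structure_S(a_a)} in the $2$-singular case $\lambda_1 = \lambda_2$ (where $\alpha = 1 = 2^0$ falls under case~(1)), the submodule lattice of $S^\lambda_2$ is anti-isomorphic to $A_\lambda$; the omission of $\emptyset$ in the singular case is irrelevant because $\emptyset$ is comparable to everything. In case~(1), $\alpha = 2^a$ gives $B_\lambda^- = \{a\}$, so the strictly increasing odd-indexed endpoints $i_1 < i_3 < \cdots$ of an element of $\hat A_\lambda$ admit at most one value; every non-empty $I \in A_\lambda$ is thus a single interval $[a, j)$ with $j \in B_\lambda^+$, and these are totally ordered by $j$, giving uniseriality.

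For cases~(2) and~(3), assume $\alpha$ has at least two nonzero binary digits. I would exhibit an explicit incomparable pair in $A_\lambda$ when the stated inequality fails, and rule out all other potentially problematic elements when it holds. The benchmark is the single interval $[a, b)$ with $\delta_{[a, b)} = 2^a$. In case~(2), $\alpha_{a+1} = 0$ forces $b = a+1$, and the foil is $[c, b')$ with $b'$ the smallest $0$-position of $\alpha$ after $c$, which has $\delta = 2^c$; since $c > b = a+1$, the sets $[a, b) = \{a\}$ and $[c, b')$ are disjoint, so they are incomparable whenever both lie in $A_\lambda$, i.e.\ whenever $2^c \leq \lambda_2$. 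In case~(3), $c = a+1$ and $\alpha_a = \alpha_{a+1} = \cdots = \alpha_{b-1} = 1$, and the foil is $[c, b')$ with $b'$ the smallest $0$-position of $\alpha$ after $b$; the unique zero of $\alpha$ in $[c, b')$ is $b$, giving $\delta = 2^c + 2^b$, while $[a, b)$ and $[c, b')$ have symmetric difference containing $\{a\}$ and $\{b\}$ on opposite sides, hence they are incomparable precisely when $2^c + 2^b \leq \lambda_2$.

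The main technical obstacle is the converse. Assuming the relevant inequality holds, I would use uniform lower bounds on $\delta_I$ to rule out every element of $A_\lambda$ that is not either $\emptyset$, an interval $[a, j)$ in $A_\lambda$, or (in case~(3)) an interval $[i, b)$ with $i \in B_\lambda^- \cap (a, b)$. For single intervals $[i, j)$ with $i > a$ and $j > b$, the zero at position $b$ contributes $2^b$ to $\delta_I$, forcing $\delta_I \geq 2^c + 2^b$ in case~(3) or $\delta_I \geq 2^c$ in case~(2), which exceed the respective thresholds. For multi-intervals with $t \geq 2$, the bound $\delta_I \geq 2^{i_1} + 2^{i_3}$ combined with the constraints $i_2 \in B_\lambda^+$ and $i_3 \in B_\lambda^-$ pushes $\delta_I$ above the threshold; here \Cref{one_subset} and \Cref{subset_extend} provide the structural tools that reduce every multi-interval to an already-excluded single-interval situation. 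The remaining elements in each case form an obvious chain, completing the equivalence.
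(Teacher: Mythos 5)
Your proposal is correct and rests on the same foundation as the paper's proof: translate uniseriality into the poset $A_\lambda$ being a chain, settle the power-of-two case directly, and identify $[a,b)$ together with the ``foil'' $[c,d)$ (your $[c,b')$ with $d=b'$) as the critical incomparable pair whose membership in $A_\lambda$ governs everything. Your calculations of $\delta_{[a,b)} = 2^a$ and $\delta_{[c,d)} = 2^c$ or $2^c+2^b$ in cases (2) and (3) match the paper exactly, and your argument for the forward direction (when the inequality fails) is essentially identical. Where you diverge is the converse: you propose to enumerate all elements of $A_\lambda$ directly under the hypothesis and check by brute force that they form a chain, ruling out second intervals and multi-intervals by $\delta$-bounds. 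The paper instead proves the sharper one-shot claim that $A_\lambda$ fails to be a chain if and only if $[a,b)$ and $[c,d)$ are both in $A_\lambda$, by a reduction argument: any incomparable pair $I,J \in A_\lambda$ yields a pair of incomparable constituent intervals $I'=[i_1,i_2)$, $J'=[j_1,j_2)$, and from $a \le i_1 < j_1$ with $c \le j_1$, $b \le i_2 < j_2$ with $d \le j_2$, and the monotonicity $\delta_{[a,b)} \le \delta_{I'}$, $\delta_{[c,d)} \le \delta_{J'}$, one deduces that $[a,b)$ and $[c,d)$ are themselves an incomparable pair in $A_\lambda$. That reduction avoids the case split you sketch and handles both directions simultaneously. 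Your enumeration would work (the bounds $\delta_I \ge 2^{i_1}$ and, for multi-intervals, $\delta_I \ge 2^{i_1} + 2^{i_3}$ do push everything past the threshold once you track the forced positions of $i_2 \in B_\lambda^+$ and $i_3 \in B_\lambda^-$), but it is not quite fleshed out, and your appeal to \Cref{one_subset} and \Cref{subset_extend} is not how the paper uses those lemmas (they appear in the socle computation, not here); you would need to supply the case analysis rather than lean on them. Net effect: same theorem, same key combinatorial objects, with your converse a bit more pedestrian and the paper's a bit more economical.
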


\begin{proof}
    $S^\lambda_2$ is uniserial if and only if the poset $A_\lambda$ is in fact a totally ordered set under inclusion. 

    If $\alpha$ is a power of two, then $A_\lambda$ is a totally ordered set under inclusion, as the elements of $A_\lambda$ are either the empty set, or are of the form $[\nu_2(\alpha), i).$  

    Assume $\alpha$ is not a power of two, then it has at least two non-zero digits in its binary expansion, and we can define $a, b,$ and $c$ as in the statement of the theorem. Define $d$ minimal such that $d > \max\{b,c\}$ and $2^d \not\subseteq_2 \alpha;$ one can think of $d$ as the first $0$ after both $b$ and $c.$ 
    
    $A_\lambda$ is not a totally ordered set if and only if there are $I,J \in A_\lambda$ such that $I\not\subseteq J$ and $J \not\subseteq I.$ As $I$ and $J$ are both disjoint unions of intervals, we can find a constituent interval $I' := [i_1, i_2)$ of $I$ and a constituent interval $J' := [j_1, j_2)$ of $J$ which are incomparable. So $A_\lambda$ is not a totally ordered set if and only there are two intervals $I'$ and $J'$ in $A_\lambda$ which are incomparable. Without loss of generality, I can take $i_1 < j_1$ and $i_2 < j_2.$ We also have $a \leq i_1, b \leq i_2$ and $\delta(I') \geq \delta([a,b))$ so $[a,b)$ and $[j_1, j_2)$ are incomparable, and both are in $A_\lambda$ if and only if there are two incomparable intervals $I'$ and $J'$ both in $A_\lambda$. Similarly, $a < c \leq j_1, b < d \leq j_2$ and $\delta(J') \geq \delta([c,d))$ so $[a,b)$ and $[c,d)$ are incomparable, and both are in $A_\lambda$ if and only if $[a,b)$ and an interval $J'$ are incomparable and both in $A_\lambda$. So $A_\lambda$ is not a totally ordered set if and only if $[a,b)$ and $[c,d)$ are both in $A_\lambda.$ We have $\delta([c,d)) > \delta([a,b)),$ so both intervals are in $A_\lambda$ if and only if $\delta([c,d)) \leq \lambda_2.$ 

    In the case $c > b, [c,d)$ is not in $A_\lambda$ if and only if $\delta([c,d)) = 2^c > \lambda_2.$ 

    In the case $c < b, [c,d)$ is not in $A_\lambda$ if and only if $\delta([c,d)) = 2^c + 2^b > \lambda_2.$
\end{proof}

\begin{corollary}
    Let $\lambda = (\lambda_1, \lambda_2) \vdash n.$ If $\lambda_1 \leq 7$ or $\lambda_2 \leq 3$ then $S^\lambda_2$ is uniserial.
\end{corollary}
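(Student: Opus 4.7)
The plan is to reduce directly to \Cref{2_part_uniserial} and verify, under either hypothesis of the corollary, that one of its three listed conditions is satisfied. Set $\alpha := \lambda_1 - \lambda_2 + 1$ as in that theorem. The case where $\alpha$ is a power of two is disposed of immediately by condition (1), so throughout I can assume $\alpha$ has at least two non-zero binary digits and the integers $a,b,c$ of \Cref{2_part_uniserial} are defined.

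Consider first the hypothesis $\lambda_2 \leq 3$. Reading off the minimum possible values of $a,b,c$ from their defining descriptions (first $1$, first $0$ after $a$, first $1$ after $a$), in the subcase $c > b$ one has $c \geq b + 1 \geq a + 2 \geq 2$, so $2^c \geq 4 > \lambda_2$ and condition (2) applies; in the subcase $c < b$ one has $c \geq a + 1 \geq 1$ and $b \geq c + 1 \geq 2$, so $2^c + 2^b \geq 6 > \lambda_2$ and condition (3) applies.

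Consider next the hypothesis $\lambda_1 \leq 7$, which forces $\alpha \leq 8$. The powers of two $\alpha \in \{1,2,4,8\}$ are already covered. For each of the four remaining values $\alpha \in \{3,5,6,7\}$ I would compute $a,b,c$ directly from the binary expansion of $\alpha$ and use the identity $\lambda_2 = \lambda_1 - \alpha + 1 \leq 8 - \alpha$ to bound $\lambda_2$; a short check shows that this bound is comfortably smaller than the relevant $2^c$ or $2^c + 2^b$, so one of condition (2) or (3) applies in each case.

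There is essentially no obstacle once \Cref{2_part_uniserial} is in hand: the corollary reduces to a finite case check together with some elementary bookkeeping in base two. The only point requiring care is correctly identifying the minimal values of $b$ and $c$ in each subcase, which follows immediately from their definitions as the successive zero and one after the leading one in the binary expansion of $\alpha$.
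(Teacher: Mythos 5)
Your proof is correct and follows the same route as the paper: the paper simply states that the corollary "follows immediately from \Cref{2_part_uniserial}", leaving the finite case check implicit, whereas you spell it out. Your case analysis is sound (in particular, correctly observing that $c>b$ forces $b=a+1$ and $c\geq a+2$, while $c<b$ forces $c=a+1$ and $b\geq a+2$, so that the lower bounds $2^c\geq 4$ and $2^c+2^b\geq 6$ dispatch $\lambda_2\leq 3$, and the four non-power-of-two values $\alpha\in\{3,5,6,7\}$ dispatch $\lambda_1\leq 7$).
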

\begin{proof}
    This follows immediately from \Cref{2_part_uniserial}.
\end{proof}

 We give a result about the socle of $2$-part Specht modules. We will use the following notation regularly, which we give for general primes $p,$ but we will frequently use when $p=2$:

\begin{definition}\cite{murphy1982submodule}
    For a non-negative integer $r$ and a prime $p,$ define $L_p(r)$ to be the smallest non-negative integer such that $r < p^{L_p(r)}.$ If $p=2$ we simply denote this as $L(r).$ 
\end{definition}
One can think of $L_p(r)$ as the number of significant digits when $r$ is written in its $p$-adic expansion.

\begin{definition}
    Let $\alpha$ and $i$ be non-negative integers. We write $\overline{\alpha}_i$ for the remainder when $\alpha$ is divided by $2^i.$ 
\end{definition}

Murphy proved that a $2$-part Specht module has a unique minimal submodule in any characteristic \cite[Theorem 4.6]{murphy1982submodule}. We give a new proof of this when $p=2$ and identify this minimal submodule explicitly.

\begin{theorem}\label{2-part_socle_pre}
    Let $\lambda = (\lambda_1, \lambda_2) \vdash n, \alpha = \lambda_1 - \lambda_2 + 1,$ and let $L = L(\lambda_2).$ Then the socle of $S^\lambda$ in characteristic $2$ is $D^{(\lambda_1+d, \lambda_2 - d)},$ where

    \[
    d =
    \begin{cases}
       0 & \text{ if } \overline{\alpha}_L  = 0 \\  
       2^L - \overline{\alpha}_L & \text{ if } \overline{\alpha}_L \geq 2^L - \lambda_2,  \\
       2^{L-1} - \overline{\alpha}_{L-1} & \text{ if } \overline{\alpha}_L < 2^L - \lambda_2, \overline{\alpha}_L \neq 0.
    \end{cases}
    \]
\end{theorem}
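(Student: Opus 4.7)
The plan is to translate the problem into a combinatorial maximisation over $A_\lambda$. By \Cref{2_part_submod_structure} in the 2-regular case (and \Cref{submod_structure_S(a_a)} in the 2-singular case), submodules of $S^\lambda_2$ correspond to elements of $A_\lambda$ (respectively $A_\lambda \setminus \{\emptyset\}$), with submodule containment reversing subset containment, and the socle corresponds to the unique maximal element under inclusion. By \Cref{one_subset} followed by \Cref{subset_extend}, every element of $A_\lambda$ is contained in an interval $[\nu_2(\alpha), j) \in A_\lambda$ for some $j \in B^+_\lambda$. Since all such intervals share the left endpoint $\nu_2(\alpha)$, they are totally ordered by inclusion, which both reproves the uniqueness of the minimal submodule and identifies the socle with the unique maximal such interval. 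So the problem reduces to finding the largest $j^* \in B^+_\lambda$ with $\delta([\nu_2(\alpha), j^*)) \leq \lambda_2$, and setting $d = \delta([\nu_2(\alpha), j^*))$.

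The workhorse is the identity $\delta([\nu_2(\alpha), j)) = 2^j - \overline{\alpha}_j$, from which one sees that $\delta$ is non-decreasing in $j$ and strictly increases by $2^j$ exactly when $\alpha_j = 0$. Writing $L = L(\lambda_2)$ so that $2^{L-1} \leq \lambda_2 < 2^L$, the three cases in the statement correspond to where the optimum $j^*$ sits relative to position $L$. In Case 1 $(\overline{\alpha}_L = 0)$, every bit of $\alpha$ below $L$ vanishes so $\nu_2(\alpha) \geq L$, and even the smallest non-empty interval has $\delta \geq 2^L > \lambda_2$, giving $d = 0$. In Case 2 $(\overline{\alpha}_L \geq 2^L - \lambda_2)$, I would take $j^*$ to be the least $j \geq L$ with $\alpha_j = 0$; telescoping through the intervening run of 1-bits yields $\delta = 2^L - \overline{\alpha}_L \leq \lambda_2$. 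In Case 3 $(0 < \overline{\alpha}_L < 2^L - \lambda_2)$, the bound $2^L - \lambda_2 \leq 2^{L-1}$ forces $\alpha_{L-1} = 0$ and $\overline{\alpha}_{L-1} = \overline{\alpha}_L$, making $[\nu_2(\alpha), L-1)$ a valid interval with $\delta = 2^{L-1} - \overline{\alpha}_{L-1}$.

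The main obstacle is verifying that the candidate $j^*$ is genuinely the largest valid $j$ rather than merely a valid one. In Case 2, if one tries to extend past $j^* = L+k$ (the first 0-bit at or above $L$), the next 0-bit of $\alpha$ contributes another $2^L$ (or more) to $\delta$, and the bound $\lambda_2 + \overline{\alpha}_L < 2 \cdot 2^L$, immediate from $\lambda_2, \overline{\alpha}_L < 2^L$, ensures this overshoots $\lambda_2$. Case 3 maximality is easier: monotonicity and the case hypothesis give $\delta([\nu_2(\alpha), j)) \geq 2^L - \overline{\alpha}_L > \lambda_2$ for all $j \geq L$. Finally, matching the computed $d = 2^{j^*} - \overline{\alpha}_{j^*}$ to the piecewise formula in the statement uses $\overline{\alpha}_{L+k} = 2^{L+k} - 2^L + \overline{\alpha}_L$ in Case 2 and $\overline{\alpha}_{L-1} = \overline{\alpha}_L$ in Case 3 to close the identification.
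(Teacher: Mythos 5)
Your proof is correct and follows essentially the same route as the paper: reduce the socle to the maximal interval $[\nu_2(\alpha), j) \in A_\lambda$ via \Cref{one_subset,subset_extend}, apply the identity $\delta([\nu_2(\alpha), j)) = 2^j - \overline{\alpha}_j$, and split into the same three cases. One small improvement worth noting: your Case~2 maximality bound (via $\lambda_2 + \overline{\alpha}_L < 2^{L+1}$) is cleaner than the paper's claimed inequality $\delta([\nu_2(\alpha),k)) \geq 2^k$, which as literally stated is too strong (one actually has $\delta([\nu_2(\alpha),k)) = 2^k - \overline{\alpha}_k < 2^k$), even though the intended conclusion $\delta > \lambda_2$ holds.
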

\begin{proof}
    First we show that the socle of $S_2^\lambda$ is simple; that is, there is a unique maximal element in $A_\lambda$ with respect to inclusion (recall that the elements of $A_\lambda$ are subsets of the natural numbers). If $A_\lambda$ contains only the empty set, then this is clear. Otherwise, let $I \in A_\lambda$ be maximal, then by \Cref{one_subset,subset_extend}, it must be of the form $[\nu_2(\alpha), i)$ for some $i.$ Any other maximal element must also be of this form, say $[\nu_2(\alpha), j)$ for some $j,$ but then these two elements can be compared, so there is a unique maximal element in $A_\lambda$.

    In the first case, $\nu_2(\alpha) \geq L > \lambda_2,$ and so the only subset in $A_\lambda$ is the empty set. $\delta(\emptyset) = 0$ and so $S_2^\lambda = D_2^\lambda$ and the socle is $D_2^\lambda$.

    Now assume that $\nu_2(\alpha) < L.$ 

    Assume $\lambda_2 \geq 2^L - \overline{\alpha}_L.$ Let $j \geq L$ be minimal such that $2^j \not\subseteq_2 \alpha.$ Then $\delta([\nu_2(\alpha), j)) = 2^j - \overline{\alpha}_j = 2^L - \overline{\alpha}_L \leq \lambda_2.$ If $k > j$ and $2^k \not\subseteq_2 \alpha,$ then $\delta([\nu_2(\alpha), k)]) \geq 2^k > 2^L > \lambda_2,$ hence $[\nu_2(\alpha), j)$ is the subset of $A_\lambda$ corresponding to the socle of $S_2^\lambda.$

    Now assume $\nu_2(\alpha) < L$ and $\lambda_2 < 2^L - \overline{\alpha}_L.$ Then $2^L -1 \geq 2^L - \overline{\alpha}_L > \lambda_2 \geq 2^{L-1},$ so $2^{L-1} \not\subseteq_2 \alpha.$ Then $[\nu_2(\alpha), L-1)$ is the subset of $A_\lambda$ corresponding to the socle of $S_2^\lambda,$ and $\delta([\nu_2(\alpha), L-1)) = 2^{L-1}- \overline{\alpha}_{L-1}.$
\end{proof}

\begin{corollary}\label{2-part_socle}
    Let $\lambda = (\lambda_1, \lambda_2) \vdash n, \alpha = \lambda_1 - \lambda_2 + 1, L = L(\lambda_2),$ and let $0 \leq d < 2^L$ be such that $\alpha + d \equiv 0 \mod 2^L.$ Then the socle of $S_2^\lambda$ is 
    \begin{itemize}
        \item $D_2^{(\lambda_1 + d, \lambda_2 - d)}$ if $0 \leq d \leq \lambda_2,$ or
        \item $D_2^{(\lambda_1 + d - 2^{L-1}, \lambda_2 - d + 2^{L-1})}$ if $\lambda_2 < d < 2^L$.
    \end{itemize}
\end{corollary}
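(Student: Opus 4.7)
The strategy is to deduce this corollary as a direct reformulation of \Cref{2-part_socle_pre}, translating its three-case description of the socle into the single arithmetic condition given here. The key preliminary observation is that the integer $d$ characterised in the hypothesis is, by uniqueness of the additive inverse of $\alpha$ modulo $2^L$ in the range $[0, 2^L)$, equal to $2^L - \overline{\alpha}_L$ when $\overline{\alpha}_L \neq 0$ and equal to $0$ when $\overline{\alpha}_L = 0$. Everything below is then a matter of matching this single formula for $d$ against the three branches of the theorem.

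I would next dispatch the first bullet ($0 \leq d \leq \lambda_2$), which absorbs two of the theorem's cases. If $\overline{\alpha}_L = 0$ then $d = 0$, and the theorem gives socle $D_2^\lambda = D_2^{(\lambda_1 + d, \lambda_2 - d)}$; if instead $\overline{\alpha}_L \neq 0$ and $d \leq \lambda_2$, the inequality rearranges to $\overline{\alpha}_L \geq 2^L - \lambda_2$, which is exactly the second case of the theorem and produces socle $D_2^{(\lambda_1 + (2^L - \overline{\alpha}_L), \lambda_2 - (2^L - \overline{\alpha}_L))} = D_2^{(\lambda_1 + d, \lambda_2 - d)}$, as claimed.

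The substantive work, and where I expect the only real obstacle to lie, is the second bullet $\lambda_2 < d < 2^L$. This forces $\overline{\alpha}_L \neq 0$ and $\overline{\alpha}_L < 2^L - \lambda_2$, placing us in the third case of \Cref{2-part_socle_pre}, which yields socle $D_2^{(\lambda_1 + e, \lambda_2 - e)}$ with $e = 2^{L-1} - \overline{\alpha}_{L-1}$. To reconcile this with the claimed $D_2^{(\lambda_1 + d - 2^{L-1}, \lambda_2 - d + 2^{L-1})}$, the remaining task is to verify $\overline{\alpha}_{L-1} = \overline{\alpha}_L$, equivalently that the $2^{L-1}$-digit of $\alpha$ vanishes. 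I would establish this by combining the case hypothesis $\overline{\alpha}_L < 2^L - \lambda_2$ with the bound $\lambda_2 \geq 2^{L-1}$ (which holds whenever $L \geq 1$, by definition of $L = L(\lambda_2)$) to conclude $\overline{\alpha}_L < 2^{L-1}$, forcing the $(L-1)$-th binary digit of $\alpha$ to be zero. Then $d - 2^{L-1} = 2^{L-1} - \overline{\alpha}_L = 2^{L-1} - \overline{\alpha}_{L-1} = e$, and the translation is complete. The degenerate situation $L = 0$ collapses to $\lambda_2 = 0$ and $d = 0$, making the second bullet vacuous and the first bullet trivially correct.
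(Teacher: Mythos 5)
Your argument is correct and is exactly the translation the paper leaves implicit: the paper states \Cref{2-part_socle} without proof, as a direct consequence of \Cref{2-part_socle_pre}, and your three-step matching (identify $d$ as $2^L - \overline{\alpha}_L$ when $\overline{\alpha}_L \neq 0$ else $0$; absorb the first two cases of the theorem into the bullet $d \le \lambda_2$; and in the remaining bullet use $\lambda_2 \ge 2^{L-1}$ to force the $(L-1)$-th bit of $\alpha$ to vanish so that $\overline{\alpha}_{L-1} = \overline{\alpha}_L$) is the intended derivation. The observation that $\overline{\alpha}_L < 2^L - \lambda_2 \le 2^{L-1}$ is the one genuinely nontrivial step, and you handle it correctly, including the degenerate $L=0$ case.
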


\begin{corollary}\label{2_part_trivial_socle}
    Let $\lambda = (\lambda_1,\lambda_2) \vdash n.$ Then the socle of $S^\lambda_2$ is trivial if and only if $\lambda_1 \equiv -1 \mod 2^{L(\lambda_2)}.$
\end{corollary}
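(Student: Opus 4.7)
The plan is to apply Corollary \ref{2-part_socle} directly, noting that the trivial $\mathbb{F}_2 S_n$-module is $D_2^{(n)}$, so the socle of $S_2^\lambda$ is trivial precisely when the two-part partition labelling the socle has an empty second row.

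First I would deal with the degenerate case $\lambda_2 = 0$ separately (where the Specht module is already the trivial module and the congruence $\lambda_1 \equiv -1 \pmod{2^{L(0)}}$ holds vacuously since $2^{L(0)} = 1$). From now on assume $\lambda_2 \geq 1$, so that $L := L(\lambda_2)$ satisfies $2^{L-1} \leq \lambda_2 < 2^L$. Let $d$ be the unique integer with $0 \le d < 2^L$ and $\alpha + d \equiv 0 \pmod{2^L}$, where $\alpha = \lambda_1 - \lambda_2 + 1$.

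Next I would split according to the two cases of Corollary \ref{2-part_socle}. In the first case ($0 \le d \le \lambda_2$), the socle is $D_2^{(\lambda_1+d,\lambda_2-d)}$, which equals $D_2^{(n)}$ if and only if $d = \lambda_2$; substituting this into $\alpha + d \equiv 0 \pmod{2^L}$ gives $\lambda_1 + 1 \equiv 0 \pmod{2^L}$, i.e.\ $\lambda_1 \equiv -1 \pmod{2^{L(\lambda_2)}}$. Conversely, if $\lambda_1 \equiv -1 \pmod{2^L}$, then $\alpha \equiv -\lambda_2 \pmod{2^L}$, and since $0 \le \lambda_2 < 2^L$ the unique $d$ in range is $d = \lambda_2$, which lies in the first case and produces trivial socle.

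Finally, I would rule out the second case. Here the socle is $D_2^{(\lambda_1+d-2^{L-1},\lambda_2-d+2^{L-1})}$, and triviality would require $d = \lambda_2 + 2^{L-1}$. But $\lambda_2 \ge 2^{L-1}$ forces $d \ge 2^L$, contradicting $d < 2^L$. So the second case never contributes a trivial socle, and the equivalence is established. The argument is essentially bookkeeping on the output of Corollary \ref{2-part_socle}, and no step looks like a real obstacle — the only thing to watch is the boundary behaviour of $L(\lambda_2)$ when $\lambda_2$ is itself a power of two, which just requires noting the strict inequality $\lambda_2 < 2^L$ in the definition of $L$.
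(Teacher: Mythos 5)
Your proposal is correct and matches the paper's intended approach: the corollary is stated as an immediate consequence of \Cref{2-part_socle}, and your argument is exactly the case analysis one performs to extract it, including the degenerate case $\lambda_2 = 0$ and the observation that the second branch of \Cref{2-part_socle} cannot yield $D_2^{(n)}$ because $\lambda_2 \geq 2^{L-1}$ forces $d \geq 2^L$.
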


\begin{remark}
    For general $\lambda \vdash n$ and positive characteristic, it is known when $S_p^\lambda$ has a submodule isomorphic to the trivial module $S_p^{(n)}$ \cite[Theorem 24.4]{James1978}. In particular, if the characteristic is $2,$ this happens if and only if for all $i, \lambda_i \equiv -1 \mod 2^{L(\lambda_{i+1})}.$ Combining this with the result that states that $S_2^\lambda$ has a unique minimal submodule when $\lambda$ is a $2$-part partition \cite[Theorem 4.6]{murphy1982submodule}, we also get the above corollary without needing \Cref{2-part_socle}.
\end{remark}

One can observe that the condition that $I \in A_\lambda$ only if $\delta_I \leq \lambda_2$ means that the submodule structure of $S_p^\lambda$ is dependent only on the first $L_p(\lambda_2)$ digits of the $p$-adic expansion of $\alpha.$ We make this idea more precise in the following lemma.

\begin{lemma}\label{period_2_part}
    Let $\lambda = (\lambda_1, \lambda_2) \vdash n, \mu = (\mu_1, \mu_2) \vdash m$ be $p$-regular partitions, and write $L = L_p(\lambda_2)$. If $\lambda_2 = \mu_2$ and $\lambda_1 \equiv \mu_1 \mod p^L$ then the submodule lattices of $S_p^\lambda$ and $S_p^\mu$ are isomorphic.
\end{lemma}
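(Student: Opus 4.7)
The plan is to apply the reformulation given in \Cref{2_part_submod_structure_new_proof}, which describes the composition factors of $S_p^\lambda$ by indices $d$ with $0 \leq d \leq \lambda_2$, $(\lambda_1+d, \lambda_2-d)$ $p$-regular, and $\alpha + 2d \supseteq_p d$ (where $\alpha = \lambda_1 - \lambda_2 + 1$), and describes submodule containments by $M_{d_1} \supseteq M_{d_2} \Leftrightarrow \alpha + d_1 + d_2 \supseteq_p d_1$. Writing $\beta := \mu_1 - \mu_2 + 1$, the goal is to show that the map $d \mapsto d$ gives a bijection between the indexing sets for the composition factors of $S_p^\lambda$ and $S_p^\mu$ that preserves the containment relation among the $M_d$'s, which forces an isomorphism of the full submodule lattices (since both Specht modules are multiplicity-free and the lattices are determined by the $M_d$'s and their containments).

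The first step is to note that since $\lambda_2 = \mu_2$ we have $L_p(\mu_2) = L$, and every admissible index satisfies $d \leq \lambda_2 < p^L$. Hence the $p$-adic expansion of any such $d$ (and of $d_1$) is supported entirely on positions $0, 1, \ldots, L-1$, so the relations $\alpha + 2d \supseteq_p d$ and $\alpha + d_1 + d_2 \supseteq_p d_1$ only compare digits at positions strictly less than $L$.

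The key technical step is the standard carry-propagation fact: the $p$-adic digit at position $i$ of a sum depends only on the digits of the summands at positions $\leq i$. Applied to $\alpha + 2d$ and $\alpha + d_1 + d_2$ (noting $2d, d_1 + d_2 < 2p^L$), this shows that the digits of these sums at positions $< L$ are determined entirely by the first $L$ digits of $\alpha$. The hypothesis $\lambda_1 \equiv \mu_1 \pmod{p^L}$ together with $\lambda_2 = \mu_2$ forces the first $L$ digits of $\alpha$ and $\beta$ to agree, and so both $\supseteq_p$ conditions (for composition factors and for containment) translate identically between $\lambda$ and $\mu$.

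A small leftover check is that $(\lambda_1+d, \lambda_2-d)$ is $p$-regular if and only if $(\mu_1+d, \mu_2-d)$ is: for odd $p$ both are automatic, while for $p = 2$ both amount to $\alpha + 2d \geq 2$ (equivalently $\beta + 2d \geq 2$), which follows from the $p$-regularity hypotheses on $\lambda$ and $\mu$ via the agreement of low-order digits established above. The principal obstacle I anticipate is simply articulating the carry-propagation argument precisely enough to be certain that no digit at a position $\geq L$ ever leaks into a position $< L$ of the sums under consideration; once this is nailed down, the conclusion is immediate from \Cref{2_part_submod_structure_new_proof}.
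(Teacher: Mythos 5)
Your proposal is correct and takes essentially the same route as the paper's proof: both invoke \Cref{2_part_submod_structure_new_proof} and argue that since $d, d_1 < p^L$, the $\supseteq_p$ conditions on $\alpha + 2d$ (respectively $\alpha + d_1 + d_2$) depend only on the first $L$ $p$-adic digits, which the congruence $\alpha_\lambda \equiv \alpha_\mu \pmod{p^L}$ forces to agree. The paper does not bother to verify the $p$-regularity of $(\lambda_1 + d, \lambda_2 - d)$ explicitly; you do, which is fine, though the clean reason is simply that $p$-regularity of $\lambda$ gives $\alpha_\lambda \geq 2$ when $p=2$ (and is vacuous for odd $p$), making the condition automatic for all $0 \le d \le \lambda_2$ without any appeal to the digit comparison.
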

\begin{proof}
    We prove the lemma by finding a bijection between the set of composition factors of $S_p^\lambda$ and $S_p^\mu$ which respects submodule containment.
    
    Define $\alpha_\lambda := \lambda_1 - \lambda_2 + 1,$ and $\alpha_\mu := \mu_1 - \mu_2 + 1.$ Write $D_\lambda := \{0 \leq d \leq \lambda_2 \,|\, (\lambda_1 + d, \lambda_2 - d) \text{ is $p$-regular and } \alpha_\lambda + 2d \supseteq_p d\}$ and $D_\mu := \{0 \leq d \leq \mu_2 \,|\, (\mu_1 + d, \mu_2 - d) \text{ is $p$-regular and } \alpha_\mu + 2d \supseteq_p d\}.$

    Clearly $\alpha_\lambda \equiv \alpha_\mu \mod p^L,$ and so $\alpha_\lambda + 2d$ and $\alpha_\mu + 2d$ agree on the first $L$ digits. Hence $\alpha_\lambda + 2d \supseteq_p d$ if and only if $\alpha_\mu + 2d \supseteq_p d,$ and $D_\lambda = D_\mu.$

    Similarly, if $d_1, d_2 \in D_\lambda$ then $\alpha_\lambda + d_1 + d_2$ and $\alpha_\mu + d_1 + d_2$ agree on the first $L$ digits, hence $\alpha_\lambda + d_1 + d_2 \supseteq_p d_1$ if and only if $\alpha_\mu + d_1 + d_2 \supseteq_p d_1.$ 

\end{proof}

\begin{remark}
    Note that this bijection maps the irreducible subquotient in $S_p^\lambda$ labelled by $D_p^{(\lambda_1 + d, \lambda_2 - d)}$ to the irreducible subquotient in $S_p^\mu$ labelled by $D_p^{(\mu_1 + d, \mu_2 - d)}.$
\end{remark}

\begin{example}\label{2_part_isolatic}
We illustrate \Cref{period_2_part} by comparing the submodule lattices of $S^{(9,5)}_2, S^{(17,5)}_2$ and $S^{(23,5)}_2.$ In the diagrams below, the vertices represent submodules of $S^\lambda_2,$ and the nodes are labelled by the dimension of the submodule. The directed edge from a vertex $u$ to a vertex $v$ indicates that the submodule associated to $u$ is a maximal submodule of the submodule associated to $v,$ and the edges are labelled by their irreducible subquotients. 

\scalebox{0.8}{
\begin{tabular}{|c|c|c|}
        \hline
        \begin{tikzpicture}[main/.style = {draw, circle}, scale = 3]
            \node at (0,-1/2) {The submodule lattice for $S^{( 9, 5 )}_2$};
            
            \node[main] (0) at (0,0) {$0$};
            \node[main] (64) at (0,1) {$64$};
            \node[main] (65) at (0,2) {$65$};
            \node[main] (77) at (-1/2,3) {$77$};
            \node[main] (429) at (1/2,3) {$429$};
            \node[main] (441) at (0,4) {$441$};
            \node[main] (1001) at (0,5) {$1001$};
            
            \draw[->] (0) edge ["$D_2^{(12,2)}$"', pos = 0.5] (64);
            
            \draw[->] (64) edge ["$D_2^{(14)}$"', pos = 0.5] (65);
            
            \draw[->] (65) edge ["$D_2^{(13,1)}$", pos = 0.5] (77);
            \draw[->] (65) edge ["$D_2^{(10,4)}$"', pos = 0.5] (429);
            
            \draw[->] (77) edge ["$D_2^{(10,4)}$", pos = 0.5] (441);
            
            \draw[->] (429) edge ["$D_2^{(13,1)}$"', pos = 0.5] (441);
            
            \draw[->] (441) edge ["$D_2^{(9,5)}$"', pos = 0.5] (1001);
            \addvmargin{1mm}
        \end{tikzpicture}
         &  
         \begin{tikzpicture}[main/.style = {draw, circle}, scale = 3]
            \node at (0,-1/2) {The submodule lattice for $S^{( 17, 5 )}_2$};
            
            \node[main] (0) at (0,0) {$0$};
            \node[main] (188) at (0,1) {$188$};
            \node[main] (189) at (0,2) {$189$};
            \node[main] (209) at (-1/2,3) {$209$};
            \node[main] (4655) at (1/2,3) {$4655$};
            \node[main] (4675) at (0,4) {$4675$};
            \node[main] (14344) at (0,5) {$14344$};
            
            \draw[->] (0) edge ["$D_2^{(20,2)}$"', pos = 0.5] (188);
            
            \draw[->] (188) edge ["$D_2^{(22)}$"', pos = 0.5] (189);
            
            \draw[->] (189) edge ["$D_2^{(21,1)}$", pos = 0.5] (209);
            \draw[->] (189) edge ["$D_2^{(18,4)}$"', pos = 0.5] (4655);
            
            \draw[->] (209) edge ["$D_2^{(18,4)}$", pos = 0.5] (4675);
            
            \draw[->] (4655) edge ["$D_2^{(21,1)}$"', pos = 0.5] (4675);
            
            \draw[->] (4675) edge ["$D_2^{(17,5)}$"', pos = 0.5] (14344);
            \addvmargin{1mm}
        \end{tikzpicture}
        &
        \begin{tikzpicture}[main/.style = {draw, circle}, scale = 3]
            \node at (0,-1/2) {The submodule lattice for $S^{( 25, 5 )}_2$};
            
            \node[main] (0) at (0,0) {$0$};
            \node[main] (376) at (0,1) {$376$};
            \node[main] (377) at (0,2) {$377$};
            \node[main] (405) at (-1/2,3) {$405$};
            \node[main] (20097) at (1/2,3) {$20097$};
            \node[main] (20125) at (0,4) {$20125$};
            \node[main] (115101) at (0,5) {$115101$};
            
            \draw[->] (0) edge ["$D_2^{(28,2)}$"', pos = 0.5] (376);
            
            \draw[->] (376) edge ["$D_2^{(30)}$"', pos = 0.5] (377);
            
            \draw[->] (377) edge ["$D_2^{(29,1)}$", pos = 0.5] (405);
            \draw[->] (377) edge ["$D_2^{(26,4)}$"', pos = 0.5] (20097);
            
            \draw[->] (405) edge ["$D_2^{(26,4)}$", pos = 0.5] (20125);
            
            \draw[->] (20097) edge ["$D_2^{(29,1)}$"', pos = 0.5] (20125);
            
            \draw[->] (20125) edge ["$D_2^{(25,5)}$"', pos = 0.5] (115101);
            \addvmargin{1mm}
        \end{tikzpicture}
        \\
         \hline
    \end{tabular}
    }

\end{example}

\newpage
\section{An Exact Sequence of 2-Part Specht Modules}\label{An Exact Sequence of 2-Part Specht Modules}

Later in this paper, we will need some results relating to homomorphisms between Specht modules labelled by two part partitions. We state and prove some relevant results here.

\begin{theorem}[Kernel Intersection Theorem]\cite[Corollary 17.18]{James1978}\label{Kernel Intersection Theorem}
    Let $\mu = (\mu_1, \mu_2, \ldots, \mu_r) \vdash n,$ and for $1 \leq d \leq r-1, 1 \leq u \leq \mu_{d+1},$ let $\nu = (\mu_1, \mu_2, \ldots, \mu_{d-1}, \mu_d + u, \mu_{d+1}-u, \mu_{d+2}, \ldots, \mu_r)$ be a composition of $n.$ Let $\psi_{d,u} : M^{\mu} \to M^{\nu}$ be the homomorphism that maps a tabloid $\{s\}$ to the sum of all tabloids $\{t\}$ such that $\{t\}$ agrees with $\{s\}$ in every row except the $d$ and $(d+1)$st rows, and the $(d+1)$st row of $\{t\}$ is a subset of size $(\mu_{d+1} - u)$ of the $(d+1)$st row of $\{s\}.$ Then \[S^\mu = \bigcap_{d=1}^{r-1} \bigcap_{u=1}^{\mu_{d+1}} \Ker(\psi_{d,u}).\]    
\end{theorem}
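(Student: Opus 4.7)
The plan is to prove the two inclusions $S^\mu \subseteq \bigcap_{d,u}\Ker(\psi_{d,u})$ and $\bigcap_{d,u}\Ker(\psi_{d,u}) \subseteq S^\mu$ separately. For the easier direction, it suffices to verify $\psi_{d,u}(e_t) = 0$ for every $\mu$-tableau $t$. I would argue by pairing terms in the expansion $e_t = \sum_{\sigma \in C_t}\mathrm{sgn}(\sigma)\{\sigma t\}$: the column stabiliser $C_t$ contains many transpositions $(a,b)$ with $a$ in row $d$ and $b$ in row $d+1$ of a common column, and combining these sign-carrying symmetries with the internal summation over $u$-subsets inherent in $\psi_{d,u}$ yields matched cancellations, a pairing that works over any field (including characteristic $2$). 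Equivalently, and more conceptually, one can recognise $\psi_{d,u}(e_t)$ as the image of $\{t\}$ under a Garnir-type element on the rectangle formed by the first $\mu_{d+1}$ columns in rows $d$ and $d+1$, which vanishes for cardinality reasons.

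For the reverse inclusion, the main tool is the Young filtration of the permutation module $M^\mu$: there is a filtration $0 = V_0 \subset V_1 \subset \cdots \subset V_N = M^\mu$ whose sections are Specht modules $S^\lambda$ with $\lambda \trianglerighteq \mu$, in which $S^\mu$ appears as the unique bottom factor $V_1$. Given $v \in \bigcap_{d,u}\Ker(\psi_{d,u})$, I would show inductively, working from the top of the filtration downwards, that $v$ cannot have a nontrivial component in any section $S^\lambda$ with $\lambda \triangleright \mu$. For each such $\lambda$, choose the smallest $d$ with $\lambda_1 + \cdots + \lambda_d > \mu_1 + \cdots + \mu_d$ and set $u = (\lambda_1 + \cdots + \lambda_d) - (\mu_1 + \cdots + \mu_d)$; for this pair $(d,u)$ the composite of the inclusion $S^\lambda \hookrightarrow M^\mu$ (modulo lower terms of the filtration) with $\psi_{d,u}$ induces a nonzero semistandard-type homomorphism into $M^\nu$, contradicting $v \in \Ker(\psi_{d,u})$ unless the $S^\lambda$-component of $v$ vanishes.

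The main obstacle lies in the second inclusion. The delicate point is that although each $\psi_{d,u}$ is $\mathbb{F}S_n$-linear, it need not respect the Young filtration of $M^\mu$, so one must post-compose with suitable projections and carefully compute the induced maps on sections. The combinatorial core is then to show that the family $\{\psi_{d,u}\}$ is rich enough, among all $\mathbb{F}S_n$-homomorphisms $M^\mu \to M^\nu$ arising from composition changes of $\mu$, to separate $S^\mu$ from every strictly higher Specht factor; equivalently, that at least one semistandard homomorphism labelled by a semistandard $\lambda$-tableau of type $\mu$ (with $\lambda \triangleright \mu$) factors nontrivially through some $\psi_{d,u}$. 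This reduces to a counting argument on semistandard tableaux together with careful bookkeeping of how the $\psi_{d,u}$ interact with the filtration, which constitutes the technical heart of the result.
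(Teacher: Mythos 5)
This theorem is not proved in the paper at all: it is quoted directly from James's book (Corollary~17.18), so there is no ``paper's proof'' to compare against. What I can do is assess the sketch on its own terms.

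The forward inclusion $S^\mu \subseteq \bigcap \Ker(\psi_{d,u})$ is genuinely the easy direction and your description of the mechanism is roughly right, though your ``recognise $\psi_{d,u}(e_t)$ as the image of $\{t\}$ under a Garnir-type element'' is not literally meaningful: $\psi_{d,u}(e_t)$ lives in $M^\nu$, whereas a Garnir element acting on $\{t\}$ produces an element of $M^\mu$. The correct and short argument is simply that $\psi_{d,u}$ is $\F S_n$-linear, so $\psi_{d,u}(e_t) = \kappa_t\,\psi_{d,u}(\{t\})$, and each column of $t$ meeting both rows $d$ and $d+1$ contributes a factor of the form $(1 - (a,b))$ to $\kappa_t$; summing over the $u$-element subsets in $\psi_{d,u}(\{t\})$, every resulting $\nu$-tabloid is fixed by at least one such transposition, and these paired cancellations kill everything. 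This is close to what you wrote but should be stated as a column-stabiliser argument rather than a Garnir one.

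The reverse inclusion is where there is a genuine gap, and your specific choice of $(d,u)$ does not stay within the allowed range. You propose: for a factor $S^\lambda$ with $\lambda \triangleright \mu$, take the smallest $d$ with $\lambda_1 + \cdots + \lambda_d > \mu_1 + \cdots + \mu_d$ and set $u = (\lambda_1 + \cdots + \lambda_d) - (\mu_1 + \cdots + \mu_d) = \lambda_d - \mu_d$. But the theorem only intersects over $1 \le u \le \mu_{d+1}$, and nothing forces $\lambda_d - \mu_d \le \mu_{d+1}$. Concretely, take $\mu = (3,2,2)$ and $\lambda = (6,1)$: the Kostka number $K_{(6,1),(3,2,2)} = 2 \ne 0$, so $S^{(6,1)}$ does occur in a Specht filtration of $M^{(3,2,2)}$, yet your recipe gives $d = 1$, $u = 3 > \mu_2 = 2$, so the map $\psi_{1,3}$ you want to invoke is not one of the $\psi_{d,u}$ in the intersection. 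Fixing this requires either iterating the $\psi_{d,u}$ (showing that a chain of in-range kernels still separates $S^\lambda$) or working with a different combinatorial invariant (for example the partial order on tabloids that James actually uses), and is precisely the non-trivial part of the result; your acknowledgement that this is ``the technical heart'' is accurate, but as written the step fails and would need a different argument, not just bookkeeping.
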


Note that the maps we call $\psi_{d,u}$ are called $\psi_{d,\mu_{d+1}-u}$ in \cite[Definition 17.10]{James1978}.

\begin{definition} \cite[Definitions 13.1, 13.6]{James1978}
    Let $\lambda \vdash n$ and let $\mu$ be a composition of $n.$ A $\lambda$-tableau of type $\mu$ is a mapping $T$ from the Young diagram of $\lambda$ to $\mathbb{N}$ such that for all $i,$ the number of nodes that map to $i$ is exactly $\mu_i.$ We denote the set of $\lambda$-tableaux of type $\mu$ by $\mathcal{T}(\lambda,\mu).$

    If $T$ is a $\lambda$-tableau of type $\mu,$ we say that $T$ is \emph{semistandard} if the numbers are weakly increasing along the rows of $T$ (from left to right) and strictly increasing down the columns of $T$ (from top to bottom). We denote the set of semistandard $\lambda$-tableaux of type $\mu$ by $\mathcal{T}_0(\lambda, \mu).$ 
\end{definition}
    Note that what we previously called a $\lambda$-tableau is a $\lambda$-tableau of type $(1^n)$.

\begin{theorem}\label{dim_num_semistand_homs}\cite[Corollary 13.14]{James1978}
     Unless $p = 2$ and $\lambda$ is $2$-singular, $\dim \Hom_{\mathbb{F}S_n}(S_p^\lambda, M_p^\mu) = |\mathcal{T}_0(\lambda, \mu)|.$
\end{theorem}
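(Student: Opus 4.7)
The proof follows James's classical approach to the semistandard basis theorem. First, for each $T \in \mathcal{T}(\lambda,\mu)$, I would construct a homomorphism $\theta_T \colon M^\lambda \to M^\mu$ that depends only on the row-equivalence class of $T$. By Frobenius reciprocity, $\Hom_{\mathbb{F}S_n}(M^\lambda, M^\mu) \cong (M^\mu)^{R_{t^\lambda}}$, and a basis for the invariants is indexed by $R_{t^\lambda}$-orbits on $\mu$-tabloids, which are in bijection with row-equivalence classes of $\lambda$-tableaux of type $\mu$. Concretely, $\theta_T$ sends $\{t^\lambda\}$ to the orbit sum of $\mu$-tabloids prescribed by $T$ and extends $S_n$-linearly. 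Restricting each such $\theta_T$ with $T$ semistandard to $S^\lambda$ produces the candidate family $\{\hat\theta_T : T \in \mathcal{T}_0(\lambda,\mu)\} \subseteq \Hom(S^\lambda, M^\mu)$.

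Second, I would prove linear independence of the $\hat\theta_T$ by equipping the set of $\mu$-tabloids with a total order refining the dominance order and expanding $\hat\theta_T(e_{t^\lambda})$ for an initial standard $\lambda$-tableau $t^\lambda$. For semistandard $T$, the maximal tabloid appearing has $\pm 1$ coefficient and encodes $T$ uniquely; hence distinct semistandard tableaux produce distinct leading tabloids, forcing linear independence.

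Third, for spanning, I would argue that every $f \in \Hom(S^\lambda, M^\mu)$ lifts to some $\tilde f \in \Hom(M^\lambda, M^\mu)$. Once lifted, step~1 writes $\tilde f = \sum_T c_T \theta_T$ over all row-equivalence classes, and applying Garnir relations (valid on $S^\lambda$) to straighten each nonsemistandard $\theta_T|_{S^\lambda}$ into a combination of $\hat\theta_{T'}$ for semistandard $T'$ yields $f = \sum_{T \in \mathcal{T}_0(\lambda,\mu)} c'_T \hat\theta_T$, completing the proof.

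The main obstacle is the lifting step. Under the hypothesis that $p \neq 2$ or $\lambda$ is $2$-regular, one shows that the obstruction in $M^\lambda / S^\lambda$ vanishes and every $f$ extends; this can be done by verifying that the column and Garnir relations defining $S^\lambda$ impose no further constraint on the possible extensions than the ones already reflected in the semistandard tableaux. The exceptional case $p=2$ with $\lambda$ $2$-singular is precisely where this breaks: when $\lambda$ has two equal adjacent parts, a column transposition $\sigma$ swapping those rows gives the relation $(1+\sigma)e_t = 0$, which degenerates in characteristic $2$ (since $1 + 1 = 0$) and permits homomorphisms from $S^\lambda$ that do not lift to $M^\lambda$; in that case the inequality $\dim \Hom(S^\lambda, M^\mu) \geq |\mathcal{T}_0(\lambda,\mu)|$ can be strict, which is why the theorem excludes this situation.
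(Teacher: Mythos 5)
This theorem is not proved in the paper; it is cited verbatim as \cite[Corollary 13.14]{James1978}, so there is no ``paper's own proof'' to compare against. Your outline, however, does correctly reconstruct the shape of James's argument in \emph{The Representation Theory of the Symmetric Groups}: construct the maps $\theta_T$ indexed by row-equivalence classes of $\lambda$-tableaux of type $\mu$ (these form a basis of $\Hom_{\mathbb{F}S_n}(M^\lambda,M^\mu)$ by the Frobenius-reciprocity/orbit-sum argument you give), prove linear independence of the $\hat\theta_T$ for semistandard $T$ by a leading-tabloid argument (James's Theorem 13.11, valid over any field), and then obtain spanning by lifting a homomorphism on $S^\lambda$ to one on $M^\lambda$ (James's Theorem 13.13) and straightening.

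The gap is that the lifting step is the entire content of the theorem and you do not actually prove it. Saying ``one shows that the obstruction in $M^\lambda/S^\lambda$ vanishes'' and that ``the column and Garnir relations impose no further constraint'' merely restates the desired conclusion; it is not an argument. James's proof of the extension result is a genuinely delicate combinatorial argument involving $S^{\lambda\perp}$ and the bilinear form, and the hypothesis ``char $\neq 2$ or $\lambda$ $2$-regular'' enters at a specific technical point in that argument which your sketch does not identify. Relatedly, your proposed explanation of why $p=2$ with $\lambda$ $2$-singular is excluded is not correct as stated: for a column transposition $\sigma$ one always has $(1+\operatorname{sgn}(\sigma)\sigma)e_t = (1-\sigma)e_t=0$, and in characteristic $2$ this is literally the same relation $(1+\sigma)e_t=0$; nothing ``degenerates,'' the relation does not become trivial, and it holds for all $\lambda$ and all columns of length at least two, not just for $\lambda$ with two equal adjacent parts. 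The actual mechanism of failure is subtler, and you would need to reproduce (or cite) James's Theorem 13.13 rather than gesture at it to have a complete proof.
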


\begin{lemma}\label{homs_2_parts}
    Suppose $n=2k,$ and assume $0 \leq i < k.$ Then there is a unique, up to scaling, non-zero homomorphism $\hat{\Theta}_i : S^{(n-i,i)}_2 \to S^{(n-i-1,i+1)}_2.$
\end{lemma}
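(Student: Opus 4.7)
The plan is to combine a dimension count via semistandard tableaux with the Kernel Intersection Theorem, the key non-trivial input being the scalar action of the central sum-of-transpositions on Specht modules.

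First I would apply \Cref{dim_num_semistand_homs}, which is valid because the hypothesis $i<k$ makes $\lambda := (n-i,i)$ a $2$-regular partition, to compute $\dim\Hom_{\F_2 S_n}(S^\lambda_2, M^\mu_2) = |\mathcal{T}_0(\lambda,\mu)|$ for $\mu := (n-i-1, i+1)$. A direct enumeration shows there is exactly one semistandard $\lambda$-tableau of type $\mu$: column-strictness forces each of the $i$ boxes of row 2 to contain a $2$, which then forces row 1 to be $(1, 1, \dots, 1, 2)$ (with $n-i-1$ ones). Hence this Hom space is $1$-dimensional, which immediately gives the uniqueness clause via the embedding $\Hom(S^\lambda_2, S^\mu_2) \hookrightarrow \Hom(S^\lambda_2, M^\mu_2)$.

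For existence, let $\theta : M^\lambda_2 \to M^\mu_2$ be the semistandard homomorphism attached to this tableau; explicitly $\theta(\{t\}) = \sum_{a}\{t^a\}$, summed over $a$ in row 1 of $t$, where $\{t^a\}$ is $\{t\}$ with the entry $a$ transferred from row 1 to row 2. The restriction $\theta|_{S^\lambda_2}$ is nonzero, so it suffices to show $\theta(S^\lambda_2) \subseteq S^\mu_2$. By \Cref{Kernel Intersection Theorem} this amounts to verifying $\psi_{1,u} \circ \theta|_{S^\lambda_2} = 0$ for $u = 1, \ldots, i+1$. For $u \ge 2$ the target is $M^{(n-i-1+u, i+1-u)}_2$, and the same semistandard count shows no semistandard $\lambda$-tableaux of that type exist (row 2 of $\lambda$ alone already demands $i$ copies of $2$, but the type supplies only $i+1-u < i$), so these cases are automatic.

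The hard part is $u=1$, where the target is $M^\lambda_2$ itself. Expanding directly from the definitions yields
\[
\psi_{1,1}(\theta(\{t\})) = (n-i)\{t\} + \sum_{a \in \mathrm{row\,1}(t),\, b \in \mathrm{row\,2}(t)} (a\,b)\{t\},
\]
and the decisive step is to recognise the double sum as coming from a central element. Writing $T := \sum_{1 \le j < k \le n}(j\,k)$ and splitting $T\{t\}$ according to whether each swap-index lies in row 1 or row 2 of $t$ gives $\sum_{a,b}(a\,b)\{t\} = T\{t\} - \bigl[\binom{n-i}{2} + \binom{i}{2}\bigr]\{t\}$. Thus $\psi_{1,1}\circ\theta$ acts on $M^\lambda_2$ as the central element $T + (n-i) - \binom{n-i}{2} - \binom{i}{2}$. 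Since $T$ acts on any Specht module $S^\nu$ by the integer scalar $c_\nu = \sum_{(r,c) \in [\nu]}(c-r)$ (the content sum, valid in every characteristic by reducing the Jucys--Murphy identity from $\mathbb{Z}$), and a direct computation gives $c_\lambda = \binom{n-i}{2} + \binom{i}{2} - i$, the net scalar on $S^\lambda_2$ collapses to $n - 2i = 2(k-i) \equiv 0 \pmod 2$. The parity hypothesis $n=2k$ is used precisely here to force the scalar to vanish in characteristic $2$. Setting $\hat{\Theta}_i := \theta|_{S^\lambda_2}$ then yields the desired unique-up-to-scaling nonzero homomorphism.
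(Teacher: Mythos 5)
Your proof is correct, and for the decisive $u=1$ verification it takes a genuinely different route from the paper. The paper invokes the Dodge--Fayers combinatorial identities relating $\psi_{1,1}\circ\Theta_S$ to the homomorphisms $\Theta_T$ and $\Theta_{T'}$ indexed by row-standard tableaux, obtaining $\psi_{1,1}\circ\hat\Theta_S = n\hat\Theta_T$, so that the evenness of $n$ is used to kill the coefficient in front of $\hat\Theta_T$. You instead recognise the double sum of transpositions in $\psi_{1,1}\circ\theta$ as (a shift of) the central element $T = L_1 + \cdots + L_n$, use the Jucys--Murphy fact that $T$ acts on $S^\lambda$ by the content sum $c_\lambda = \binom{n-i}{2} + \binom{i}{2} - i$ in every characteristic, and find that the net scalar is $n-2i$, which vanishes in characteristic $2$ precisely when $n$ is even. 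Both arguments isolate the same use of the parity of $n$, but yours is more self-contained: it avoids citing the auxiliary lemmas on products and compositions of semistandard homomorphisms and replaces them with a computation that is conceptually transparent (the obstruction to $\theta$ landing in $S^\mu$ is literally a central scalar). The paper's route has the advantage of slotting directly into the machinery (Dodge--Fayers) that is reused in the proof of \Cref{2_part_exact_sequence} to show $\hat\Theta_{i+1}\circ\hat\Theta_i = 0$, so the two lemmas share a toolbox. Your $u\ge 2$ and uniqueness arguments via the semistandard-tableau count match the paper exactly.
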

\begin{proof}
    To construct a homomorphism from $S^{(n-i,i)}_2 \to S^{(n-i-1,i+1)}_2,$ we can use \cite[Definition 13.3]{James1978} to construct, given a semistandard $(n-i,i)$-tableau $S$ of type $(n-i-1,i+1),$ a homomorphism $\Theta_S : M^{(n-i,i)}_2\to M^{(n-i-1,i+1)}_2,$ and then restrict to the function $\hat{\Theta}_S : S^{(n-i,i)}_2 \to M^{(n-i-1,i+1)}_2.$ By the Kernel Intersection Theorem (\Cref{Kernel Intersection Theorem}), we then just need to check that for $d=1$ and $1 \leq u \leq i+1,$ we have $\psi_{d,u}\circ \hat{\Theta}_S = 0.$

For partitions $\lambda, \mu \vdash n,$ let $\mathcal{T}_0(\lambda, \mu)$ be the set of semistandard $\lambda$-tableaux of type $\mu.$ By \Cref{dim_num_semistand_homs} we have, unless the characteristic of the field is $2$ and $\lambda$ is $2$-singular, that $\Dim(\Hom(S_p^{\lambda}, M_p^{\mu})) = |\mathcal{T}_0(\lambda, \mu)|.$ In the case that $0 \leq i < k, \lambda = (n-i,i)$ and $\mu = (n-i-1,i+1),$ there is exactly one semistandard $(n-i,i)$-tableau $S$ of type $(n-i-1,i+1).$ More explicitly, $S$ is the $(n-i,i)$-tableau which has all $1$s in the first row except the last entry, and all $2$s in the second row. Hence there is, up to scaling, one homomorphism $\hat{\Theta}_S : S^{(n-i,i)}_2 \to M^{(n-i-1,i+1)}_2.$ 

The composition of maps $\psi_{1,u}\circ\hat\Theta_S$ is a homomorphism from $S^{(n-i,i)}_2$ to $M^{(n-i+u-1,i-u+1)}_2$. If $1 < u \leq i+1,$ then there are no semistandard $(n-i,i)$-tableaux of type $(n-i+u-1,i-u+1),$ and therefore $\Hom(S^{(n-i,i)}_2,M^{(n-i+u-1,i-u+1)}_2)=0,$ and $\psi_{1,u}\circ\hat\Theta_S$ is the zero map. In the case $u=1,$ let $T$ be the semistandard $(n-i,i)$-tableau of type $(n-i,i),$ and let $T'$ be the row standard $(n-i,i)$-tableau of type $(n-i,i)$ which has one $2$ in the first row and one $1$ in the second row. Then, by \cite[Lemma 4.3]{dodge2012some} \[\psi_{1,1} \circ \Theta_S = (n-i)\Theta_T + \Theta_{T'}.\] By \cite[Lemma 4.4]{dodge2012some} we also get \[\hat{\Theta}_{T'} = i\hat{\Theta}_T,\] and so \[\psi_{1,1} \circ \hat{\Theta}_S = n \hat{\Theta}_T = 0\] as $n$ is even. Hence by the Kernel Intersection Theorem, $\Im(\hat{\Theta}_S) \subseteq S^{(n-i-1,i+1)}_2.$
\end{proof}

We call this map $\hat{\Theta}_i.$

We now state the main result of this section.

\begin{proposition}\label{2_part_exact_sequence}
    Suppose $n=2k,$ then there is an exact sequence \[0 \xrightarrow{\hat{\Theta}_{-1}} S^{(n)}_2 \xrightarrow{\hat{\Theta}_0} S^{(n-1,1)}_2 \xrightarrow{\hat{\Theta}_1} \ldots \xrightarrow{\hat{\Theta}_{k-2}} S^{(k+1,k-1)}_2 \xrightarrow{\hat{\Theta}_{k-1}} S^{(k,k)}_2 \xrightarrow{\hat{\Theta}_k} 0.\]
\end{proposition}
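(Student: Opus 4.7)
The plan is to first identify the maps $\hat{\Theta}_i$ explicitly, verify the sequence is a complex, and then establish exactness by comparing with an exact Koszul complex on the ambient permutation modules.

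For the first step, define $\phi_i : M^{(n-i,i)}_2 \to M^{(n-i-1,i+1)}_2$ on tabloids by $\phi_i(\{t\}) = \sum_{x\notin T_t}\{t_x\}$, where $T_t$ is the second row of $\{t\}$ and $\{t_x\}$ is the tabloid whose second row is $T_t \cup \{x\}$; this is evidently an $\mathbb{F}S_n$-homomorphism. By the uniqueness in \cref{homs_2_parts}, to identify $\phi_i|_{S^{(n-i,i)}_2}$ with a non-zero scalar multiple of $\hat{\Theta}_i$ it suffices to check that $\phi_i$ restricts to a non-zero map from $S^{(n-i,i)}_2$ into $S^{(n-i-1,i+1)}_2$, which is readily verified by evaluating on a convenient standard polytabloid. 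Rescaling, we may take $\hat{\Theta}_i = \phi_i|_{S^{(n-i,i)}_2}$.

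With this description the sequence is a complex by a direct computation:
\[ \phi_{i+1}(\phi_i(\{t\})) = \sum_{\substack{x,y \notin T_t \\ x \ne y}}\{t_{x,y}\} = 2\sum_{\{x,y\}\subset T_t^c}\{t_{x,y}\} = 0 \]
in characteristic $2$, so $\hat{\Theta}_{i+1}\circ\hat{\Theta}_i = 0$. Injectivity of $\hat{\Theta}_0$ is immediate since $S^{(n)}_2$ is one-dimensional and $\hat{\Theta}_0\ne 0$.

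For exactness at the intermediate terms and surjectivity of $\hat{\Theta}_{k-1}$, I would embed our sequence in the full complex
\[ 0 \to M^{(n)}_2 \to M^{(n-1,1)}_2 \to \cdots \to M^{(0,n)}_2 \to 0 \]
with the same differentials $\phi_\bullet$. Identifying $M^{(n-i,i)}_2 \cong \bigwedge^i \mathbb{F}^n$ as $\mathbb{F}S_n$-modules (which is possible in characteristic $2$, since the signs arising from the exterior algebra action vanish), the differential $\phi_i$ becomes left wedge multiplication by $\xi := e_1 + \cdots + e_n$. Since $\xi \ne 0$, picking a vector-space complement $W$ of $\mathbb{F}\xi$ in $\mathbb{F}^n$ yields a decomposition $\bigwedge^i \mathbb{F}^n = \bigwedge^i W \oplus \xi\wedge\bigwedge^{i-1} W$ on which $\phi$ sends the first summand isomorphically onto $\xi\wedge\bigwedge^i W$ and annihilates the second, so the full complex is exact.

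The main obstacle is descending this exactness to the Specht subcomplex. Given $v \in S^{(n-i,i)}_2 \cap \ker\phi_i$, exactness of the full complex produces $w \in M^{(n-i+1,i-1)}_2$ with $\phi_{i-1}(w) = v$, and one must adjust $w$ by a suitable element of $\ker\phi_{i-1}$ so that $w$ lies in the Specht submodule $S^{(n-i+1,i-1)}_2$. I would handle this by induction up the Specht filtration of $M^{(n-i+1,i-1)}_2$ (whose graded pieces are $S^{(n-j,j)}_2$ for $0 \le j \le i-1$), using the explicit composition-factor descriptions from \cref{2_part_submod_structure} and \cref{submod_structure_S(a_a)} to control the correction at each level. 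As a consistency check, the Euler characteristic $\sum_{i=0}^k (-1)^i \dim S^{(n-i,i)}_2$ of the sequence equals zero, equivalent to the Catalan-type identity $\binom{2k-1}{k} - \binom{2k-1}{k+1} = \dim S^{(k,k)}_2$, so the dimension bookkeeping implied by exactness is internally consistent throughout and forces surjectivity of $\hat{\Theta}_{k-1}$ at the end.
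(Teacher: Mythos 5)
Your identification of the maps $\hat\Theta_i$ with wedge multiplication by $\xi = e_1 + \cdots + e_n$ on $\bigwedge^i\mathbb{F}_2^n$ is correct and elegant, and the Koszul argument does show the ambient complex $0 \to M^{(n)}_2 \to M^{(n-1,1)}_2 \to \cdots \to M^{(0,n)}_2 \to 0$ is exact; this is a genuinely different starting point from the paper, which instead composes semistandard homomorphisms and then restricts to $S_{n-1}$, exploiting that for $n$ even the restriction of each $S^{(n-i,i)}_2$ splits as a direct sum across two distinct blocks.

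However, the step you yourself flag as ``the main obstacle'' is a real gap, not a detail. Exactness of an ambient complex $M^\bullet$ does not descend to a subcomplex $S^\bullet$: the long exact sequence in homology for $0 \to S^\bullet \to M^\bullet \to M^\bullet/S^\bullet \to 0$ gives $H_{j-1}(S^\bullet) \cong H_j(M^\bullet/S^\bullet)$, so you would have to establish exactness of the quotient complex, which is not easier than the original problem. Concretely, given $v \in S^{(n-i,i)}_2$ with $\xi\wedge v = 0$, Koszul exactness supplies $w \in M^{(n-i+1,i-1)}_2$ with $\xi\wedge w = v$, and the permissible adjustments to $w$ are precisely $\ker\phi_{i-1} = \xi\wedge M^{(n-i+2,i-2)}_2$; showing that the coset $w + \xi\wedge M^{(n-i+2,i-2)}_2$ meets $S^{(n-i+1,i-1)}_2$ is equivalent to the identity $\xi\wedge S^{(n-i+1,i-1)}_2 = \bigl(\xi\wedge M^{(n-i+1,i-1)}_2\bigr)\cap S^{(n-i,i)}_2$, which is exactly the exactness statement you are trying to prove, so the ``adjustment'' step is circular unless a new ingredient is supplied. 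Your appeal to ``induction up the Specht filtration of $M^{(n-i+1,i-1)}_2$'' does not yet identify that ingredient, and the Euler characteristic check you close with is only a sanity check: for a complex it gives $\sum(-1)^i\dim H_i = 0$, not $H_i = 0$. Note also that no $S_n$-equivariant Koszul contraction exists here, since the only invariant functional on $\mathbb{F}_2^n$ sends $\xi \mapsto n = 0$ when $n$ is even, so the obstruction is intrinsic to the even case and cannot be waved away. The paper's restriction-to-$S_{n-1}$ argument is precisely the representation-theoretic input that replaces your missing descent step; your sketch would need an argument of comparable substance there to be complete.
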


\begin{proof}
    By \Cref{homs_2_parts}, $\hat{\Theta}_i = \hat{\Theta}_{S} : S^{(n-i,i)}_2 \to S^{(n-i-1,i+1)}_2,$ where $S$ is the semistandard $(n-i,i)$-tableau of type $(n-i-1,i+1).$ That is, $S$ is the semistandard $(n-i,i)$-tableau which has all $1$s in the first row except the last entry which is $2,$ and all $2$s in the second row.
    
    First, we show that for $-1 \leq i \leq k-1, \hat{\Theta}_{i+1} \circ \hat{\Theta}_i = 0.$ If $i = -1$ or $i = k-1,$ this is clear, so assume $0 \leq i \leq k-2.$ Let $T$ be the semistandard $(n-i-1,i+1)$-tableau of type $(n-i-2,i+2).$ That is, $T$ is the semistandard $(n-i-1,i+1)$-tableau which has all $1$s in the first row except the last entry, and all $2$s in the second row. Then $\hat{\Theta}_{i+1} = \hat{\Theta}_T.$ Let $U$ be the semistandard $(n-i,i)$-tableau of type $(n-i-2,i+2),$ that is, $U$ is the semistandard $(n-i,i)$-tableau which has all $1$s in the first row except two $2$s, and all $2$s in the second row. By \cite[Proposition 4.7]{dodge2012some}, we have \[\Theta_T \circ \Theta_S = \binom{n-i-2+0}{n-i-2}\binom{1+1}{1}\binom{0+0}{0}\binom{0+i}{0}\Theta_U = 0.\] Hence $\hat{\Theta}_{i+1}\circ \hat{\Theta}_i = 0,$ and $\Im(\hat{\Theta}_i) \subseteq \Ker(\hat{\Theta}_{i+1}).$ 

    We need to show that $\Im(\hat{\Theta}_i) \supseteq \Ker(\hat{\Theta}_{i+1}).$ If $i = -1,$ this follow as the map $S^{(n)}_2 \xrightarrow{\hat{\Theta}_0} S^{(n-1,1)}_2$ is a non-zero map from a simple module, hence is an injection, so $\Im(\hat{\Theta}_i) = 0 = \Ker(\hat{\Theta}_{i+1}).$

    To show that $\Im(\hat{\Theta}_i) \supseteq \Ker(\hat{\Theta}_{i+1})$ when $0 \leq i \leq k-2,$ we consider the restriction of the exact sequence to $S_{n-1},$ noting that restriction is an exact functor and so the sequence is exact over $KS_n$ if and only if it is exact over $KS_{n-1}.$ We will denote the restriction of $S_2^\lambda$ to $S_{n-1}$ by $\Res(S_2^\lambda),$ and the induced restriction of $\hat{\Theta}_i$ by $\hat{\Theta}'_i.$

    If $i=0$ then $\Res(S^{(n)}_2) \quad \isom S^{(n-1)}_2,$ and if $i=k$ then $\Res(S^{(k,k)}_2) \quad \isom S^{(k,k-1)}_2.$ If $0<i<k$ then $\Res(S^{(n-i,i)}_2) $ has a filtration (from bottom to top) via $S^{(n-i-1,i)}_2$ and $S^{(n-i,i-1)}_2$ \cite[Theorem 9.3]{James1978}. As $n$ is even, these lie in different blocks \cite[Theorem 21.11, Lemma 21.12]{James1978}, and so we actually have that $\Res(S^{(n-i,i)}_2) \isom S^{(n-i-1,i)}_2 \oplus S^{(n-i,i-1)}_2.$ We denote $S^{(n-i-1,i)+}_2$ for the copy of $S^{(n-i-1,i)}_2$ inside of $\Res(S^{(n-i-1,i+1)}_2)$ and $S^{(n-i-1,i)-}_2$ for the copy of $S^{(n-i-1,i)}_2$ inside of $\Res(S^{(n-i,i)}_2).$ 

    Consider each $\hat{\Theta}'_i |_{S^{(n-i-1,i)-}_2}$ whose codomain is $\Res(S^{(n-i-1,i+1)}_2) \quad \isom S^{(n-i-1,i)+}_2 \oplus S^{(n-i-2,i+1)-}_2.$ $S^{(n-i-1,i)}_2$ and $S^{(n-i-2,i+1)}_2$ lie in different blocks, and hence $\Im(\hat{\Theta}'_i) \cap S^{(n-i-2,i+1)-}_2 = 0,$ so $\Im(\hat{\Theta}'_i|_{S^{(n-i-1,i)-}_2}) \subseteq S^{(n-i-1,i)+}_2.$ 
    
    If $i=0, \hat{\Theta}_0$ is a non-zero map from $S^{(n)}_2$ to $S^{(n-1,1)}_2,$ so $\hat{\Theta}'_0$ is a non-zero map from $S^{(n-1)-}_2$ to $S^{(n-1)+}_2,$ and $S^{(n-1)}_2$ is irreducible, hence $\hat{\Theta}'_0$ is an isomorphism. Now assume for induction that $\hat{\Theta}'_{i-1}$ restricts to an isomorphism from $S^{(n-i,i-1)-}_2$ to $S^{(n-i,i-1)+}_2$. We want to show that $\hat{\Theta}'_i$ restricts to an isomorphism from $S^{(n-i-1,i)-}_2$ to $S^{(n-i-1,i)+}_2$. Indeed, we have that $\Im(\hat{\Theta}'_{i-1}) \supseteq S^{(n-i,i-1)+}_2$ by assumption and $\hat{\Theta}_i \circ \hat{\Theta}_{i-1} = 0,$ so if $\hat{\Theta}'_i$ restricts to zero on $S^{(n-i-1,i)-}_2$ then $\hat{\Theta}_i$ is the zero map which is a contradiction, so $\hat{\Theta}'_i$ is non-zero on $S^{(n-i-1,i)-}_2.$ Furthermore, as $S^{(n-i-1,i)}_2$ has simple head, namely $D^{(n-i-1,i)}_2,$ which only appears once as a composition factor and $\hat{\Theta}'_i|_{S^{(n-i-1,i)-}_2}$ is a non-zero map from a copy of $S^{(n-i-1,i)}_2$ to itself, we must have that $\hat{\Theta}'_i|_{S^{(n-i-1,i)-}_2}$ is an isomorphism, otherwise $S^{(n-i-1,i)}_2$ would have $D^{(n-i-1,i)}_2$ as the head of a proper submodule and $S^{(n-i-1,i)}_2$ would no longer be multiplicity-free. 
    
    Hence $\Ker(\hat{\Theta}'_i) \subseteq S^{(n-i,i-1)+}_2$ for all $1 \leq i \leq k$ and $\Im(\hat{\Theta}'_i) \supseteq S^{(n-i-1,i)+}_2$ for all $0 \leq i \leq k-1.$ As $\hat{\Theta}_i$ and $\hat{\Theta}'_i$ are identical as linear maps, we have that when $0 \leq i < k-1, \Im(\hat{\Theta}_i) \supseteq \Ker(\hat{\Theta}_{i+1})$ as required, and the sequence is exact.

\end{proof}

\begin{remark}\label{2_part_explicit_hom}
    We explicitly calculate $\hat{\Theta}_i(e_s)$ on a polytabloid $e_s$ in $S_2^{(n-i,i)}.$ For $0 \leq i \leq k-1,$ write $n-2i = 2l$ and let $s$ be an $(n-i,i)$-tableau, with entries $a_1, \ldots, a_i, c_1, \ldots, c_{2l}$ in the first row (from left to right), and $b_1, \ldots, b_i$ in the second row (from left to right). Then \[\hat{\Theta}_i(e_s) = \prod_{k=1}^i(1+(a_k,b_k))\Theta_i(\{s\}).\] 

Where $\Theta_i(\{s\})$ is a linear combination of tabloids of the following types:
\begin{itemize}
    \item the second row contains $b_j$ for every $j,$ and exactly one $a_k$ for some $k$ between $1$ and $i,$ or
    \item the second row contains $b_j$ for every $j,$ and exactly one $c_k$ for some $k$ between $1$ and $2l.$
\end{itemize}

But any tabloid of the first type is annihilated by $(1+(a_k, b_k)),$ so $\hat{\Theta}_i(e_s)$ is the sum of all tabloids which have in the second row exactly one of $a_j$ or $b_j$ for every $j,$ and exactly one $c_k$ for some $k$. We can write this as a sum of polytabloids in the following way:

Let $1 \leq j \leq l,$ and write $t_j$ for the $(n-i-1,i+1)$-tableau which has $a_1, \ldots, a_i, c_{2j-1},$ in the first row (from left to right), $b_1, \ldots, b_i, c_{2j}$ in the second row (from left to right), and $c_1, \ldots, \hat{c}_{2j-1}, \hat{c}_{2j}, \ldots, c_{2l}$ in the first row (from left to right) after $c_{2j-1}$, where $\hat{c}_{2j-1}$ and $\hat{c}_{2j}$ means we skip over $c_{2j-1}$ and $c_{2j}$. 
    
    Then it is easy to see that \[\hat{\Theta}_i (e_s) := \sum_{1 \leq j \leq l} e_{t_j}.\] 
\end{remark}

\begin{remark}
Note that as $n$ is even, \Cref{2_part_submod_structure_new_proof} states that for $i \geq 1, \lambda := (n-i,i), \alpha := \lambda_1 - \lambda_2 + 1 = n-2i+1$ is odd, so $\alpha + 2 \supseteq_2 1$ and there is a submodule $M_1 \subseteq S_2^{(n-i,i)}$ with simple head isomorphic to $D_2^{(n-i+1,i-1)}.$ Additionally, for any other odd $d$ such that $d \leq i$ and $ \alpha + 2d \supseteq_2 d,$ we must have that $\alpha + 1 + d \supseteq_2 1,$ and so $M_1$ contains all of the composition factors $D_2^{(n-k,k)}$ of $S_2^{(n-i,i)}$ where $k$ and $i$ have different parity. $S_2^{(n-i,i)}$ is multiplicity-free, so there is a unique submodule with simple head isomorphic to $D_2^{(n-i+1,i-1)},$ so we must have that $M_1 = \Im(\hat{\Theta}_{i-1}).$ \Cref{2_part_exact_sequence} also shows that the restriction of $M_1$ to $S_{n-1}$ is $S_2^{(n-i,i-1)-}$ and so these modules have the same submodule structure.
\end{remark}

We give this submodule a special name:
\begin{definition}\label{S*(n-r_r)}
    For $n$ even and $i\geq 1, S_2^{*(n-i,i)}$ is the image of $\hat{\Theta}_{i-1},$ a submodule of $S_2^{(n-i,i)}$ isomorphic to a quotient of $S_2^{(n-i+1,i-1)},$ also known as $M_1.$
\end{definition}

We illustrate the exact sequence above and the role of $S_2^{*(n-i,i)}$ when $n=14.$ In the diagrams below, the vertices represent submodules of $S_2^\lambda,$ and the nodes are labelled by the dimension of the submodule. The directed edge from a vertex $u$ to a vertex $v$ indicates that the submodule associated to $u$ is a maximal submodule of the submodule associated to $v,$ and the edges are labelled by their irreducible subquotients.

We show the submodule lattices for $S_2^{(14)}, S_2^{(13,1)}, \ldots, S_2^{(7,7)}.$ In the submodule lattice for $S_2^{(n-i,i)}$ we highlight in blue the submodule $S_2^{*(n-i,i)}$ and in red the quotient $S_2^{(n-i,i)}/S_2^{*(n-i,i)} \isom S_2^{*(n-i-1,i+1)}.$

\resizebox{0.9\textwidth}{!}{
\begin{tabular}{|c|c|c|c|}
     \hline
\begin{tikzpicture}[main/.style = {draw, circle}, scale = 3]
\node at (0,-1/2) {The submodule lattice for $S^{( 14 )}_2$};

\node[main, blue] (0) at (0,0) {$0$};
\node[main, red] (1) at (0,1) {$1$};

\draw[->] (0) edge [red, "$D_2^{(14)}$"', pos = 0.5] (1);

\end{tikzpicture}
     &
\begin{tikzpicture}[main/.style = {draw, circle}, scale = 3]
\node at (0,-1/2) {The submodule lattice for $S^{( 13, 1 )}_2$};

\node[main, blue] (0) at (0,0) {$0$};
\node[main, blue] (1) at (0,1) {$1$};
\node[main, red] (13) at (0,2) {$13$};

\draw[->] (0) edge [blue, "$D_2^{(14)}$"', pos = 0.5] (1);

\draw[->] (1) edge [red, "$D_2^{(13,1)}$"', pos = 0.5] (13);

\end{tikzpicture}
     &
\begin{tikzpicture}[main/.style = {draw, circle}, scale = 3]
\node at (0,-1/2) {The submodule lattice for $S^{( 12, 2 )}_2$};

\node[main, blue] (0) at (0,0) {$0$};
\node[main, blue] (12) at (0,1) {$12$};
\node[main, red] (13) at (0,2) {$13$};
\node[main, red] (77) at (0,3) {$77$};

\draw[->] (0) edge [blue, "$D_2^{(13,1)}$"', pos = 0.5] (12);

\draw[->] (12) edge [red, "$D_2^{(14)}$"', pos = 0.5] (13);

\draw[->] (13) edge [red, "$D_2^{(12,2)}$"', pos = 0.5] (77);

\end{tikzpicture}
     &
\begin{tikzpicture}[main/.style = {draw, circle}, scale = 3]
\node at (0,-1/2) {The submodule lattice for $S^{( 11, 3 )}_2$};

\node[main, blue] (0) at (0,0) {$0$};
\node[main, blue] (1) at (0,1) {$1$};
\node[main, blue] (65) at (0,2) {$65$};
\node[main, red] (273) at (0,3) {$273$};

\draw[->] (0) edge [blue, "$D_2^{(14)}$"', pos = 0.5] (1);

\draw[->] (1) edge [blue, "$D_2^{(12,2)}$"', pos = 0.5] (65);

\draw[->] (65) edge [red, "$D_2^{(11,3)}$"', pos = 0.5] (273);

\end{tikzpicture}
     \\ 
     \hline
\begin{tikzpicture}[main/.style = {draw, circle}, scale = 3]
\node at (0,-1/2) {The submodule lattice for $S^{( 10, 4 )}_2$};

\node[main, blue] (0) at (0,0) {$0$};
\node[main, blue] (208) at (0,1) {$208$};
\node[main, red] (272) at (0,2) {$272$};
\node[main, red] (273) at (0,3) {$273$};
\node[main, red] (637) at (0,4) {$637$};

\draw[->] (0) edge [blue, "$D_2^{(11,3)}$"', pos = 0.5] (208);

\draw[->] (208) edge [red, "$D_2^{(12,2)}$"', pos = 0.5] (272);

\draw[->] (272) edge [red, "$D_2^{(14)}$"', pos = 0.5] (273);

\draw[->] (273) edge [red, "$D_2^{(10,4)}$"', pos = 0.5] (637);

\end{tikzpicture}
     &
\begin{tikzpicture}[main/.style = {draw, circle}, scale = 3]
\node at (0,-1/2) {The submodule lattice for $S^{( 9, 5 )}_2$};

\node[main, blue] (0) at (0,0) {$0$};
\node[main, blue] (64) at (0,1) {$64$};
\node[main, blue] (65) at (0,2) {$65$};
\node[main] (77) at (-1/2,3) {$77$};
\node[main, blue] (429) at (1/2,3) {$429$};
\node[main, red] (441) at (0,4) {$441$};
\node[main, red] (1001) at (0,5) {$1001$};

\draw[->] (0) edge [blue, "$D_2^{(12,2)}$"', pos = 0.5] (64);

\draw[->] (64) edge [blue, "$D_2^{(14)}$"', pos = 0.5] (65);

\draw[->] (65) edge ["$D_2^{(13,1)}$", pos = 0.5] (77);
\draw[->] (65) edge [blue, "$D_2^{(10,4)}$"', pos = 0.51] (429);

\draw[->] (77) edge ["$D_2^{(10,4)}$", pos = 0.5] (441);

\draw[->] (429) edge [red, "$D_2^{(13,1)}$"', pos = 0.5] (441);

\draw[->] (441) edge [red, "$D_2^{(9,5)}$"', pos = 0.5] (1001);

\end{tikzpicture}
     &
\begin{tikzpicture}[main/.style = {draw, circle}, scale = 3]
\node at (0,-1/2) {The submodule lattice for $S^{( 8, 6 )}_2$};

\node[main, blue] (0) at (0,0) {$0$};
\node[main, blue] (12) at (0,1) {$12$};
\node[main] (13) at (-1/2,2) {$13$};
\node[main, blue] (572) at (1/2,2) {$572$};
\node[main, red] (573) at (0,3) {$573$};
\node[main, red] (937) at (0,4) {$937$};
\node[main, red] (1001) at (0,5) {$1001$};

\draw[->] (0) edge [blue, "$D_2^{(13,1)}$"', pos = 0.5] (12);

\draw[->] (12) edge ["$D_2^{(14)}$", pos = 0.5] (13);
\draw[->] (12) edge [blue, "$D_2^{(9,5)}$"', pos = 0.5] (572);

\draw[->] (13) edge ["$D_2^{(9,5)}$", pos = 0.5] (573);

\draw[->] (572) edge [red, "$D_2^{(14)}$"', pos = 0.5] (573);

\draw[->] (573) edge [red, "$D_2^{(10,4)}$"', pos = 0.5] (937);

\draw[->] (937) edge [red, "$D_2^{(8,6)}$"', pos = 0.5] (1001);

\end{tikzpicture}
     &
\begin{tikzpicture}[main/.style = {draw, circle}, scale = 3]
\node at (0,-1/2) {The submodule lattice for $S^{( 7, 7 )}_2$};

\node[main, blue] (0) at (0,0) {$0$};
\node[main, blue] (1) at (0,1) {$1$};
\node[main, blue] (365) at (0,2) {$365$};
\node[main, blue] (429) at (0,3) {$429$};

\draw[->] (0) edge [blue, "$D_2^{(14)}$"', pos = 0.5] (1);

\draw[->] (1) edge [blue, "$D_2^{(10,4)}$"', pos = 0.5] (365);

\draw[->] (365) edge [blue, "$D_2^{(8,6)}$"', pos = 0.5] (429);

\end{tikzpicture}
     \\
     \hline
\end{tabular}
}

\newpage
\section{A Filtration of Hook Specht Modules by 2-Part Specht Modules}\label{A Filtration of Hook Specht Modules by 2-Part Specht Modules}

Peel proved that if $n-r \geq r,$ then there is a submodule of $S_2^{(n-r,1^r)}$ isomorphic to $S_2^{(n-r,r)}$ \cite[Theorem 4]{peel1971hook}. Sutton extended this result and proved, using Khovanov-Lauda-Rouqier (KLR) algebras, that $S_2^{(n-r,1^r)}$ has a \emph{filtration} via $2$-part Specht modules \cite[Theorems 3.7, 3.10]{sutton2017graded}. We provide an elementary proof of the filtration below, that is, we construct the filtration explicitly.

\begin{theorem}\label{FPL}
    Let $\lambda = (n-r,1^r) \vdash n$ be a hook partition of $n,$ with $0 \leq r \leq n-r.$ Then $S^\lambda_2$ has a filtration via Specht modules labelled by the $2$-row partitions (from bottom to top) $S_2^{(n-r,r)}, S_2^{(n-r+2, r-2)}, \ldots, S_2^{(n-r+2k, r-2k)}, \ldots.$
\end{theorem}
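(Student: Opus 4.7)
I would proceed by induction on $r$ (with $n$ subject to $n - r \geq r$). The base cases $r = 0$ and $r = 1$ are immediate, since $(n-r, 1^r)$ is itself a $2$-row partition and the filtration has a single factor. For the inductive step $r \geq 2$, the goal is to construct an explicit short exact sequence
\[
0 \longrightarrow S_2^{(n-r, r)} \xrightarrow{\iota_r} S_2^{(n-r, 1^r)} \xrightarrow{\phi_r} S_2^{(n-r+2, 1^{r-2})} \longrightarrow 0,
\]
and then apply the inductive hypothesis to the quotient $S_2^{(n-r+2, 1^{r-2})}$, whose arm-length inequality $n-r+2 \geq r-2$ is implied by $n-r \geq r$. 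The resulting filtration of the quotient, with factors $S_2^{(n-r+2, r-2)}, S_2^{(n-r+4, r-4)}, \ldots$, can then be pulled back through $\phi_r$ and prepended with $\iota_r(S_2^{(n-r, r)})$ at the bottom to yield the required filtration.

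For the injection $\iota_r$, I would take Peel's embedding \cite[Theorem 4]{peel1971hook} and write it out explicitly on polytabloids: given an $(n-r, r)$-tableau $s$ with column pairs $(a_j, b_j)$, the image of $e_s$ is a prescribed sum of hook polytabloids whose first column records the $b_j$'s in order together with an appropriate top entry, in the spirit of the formula for $\hat{\Theta}_i$ in \Cref{homs_2_parts} and \Cref{2_part_explicit_hom}. For the quotient map, I would construct $\phi_r$ first at the level of permutation modules as $M_2^{(n-r, 1^r)} \to M_2^{(n-r+2, 1^{r-2})}$, sending a tabloid to the sum over all tabloids obtained by moving two of the singletons in rows $2, \ldots, r+1$ up into the first row; this is a specialised variant of the James operators $\psi_{d,u}$ from the Kernel Intersection Theorem, and using that theorem one checks that $\phi_r(S_2^{(n-r, 1^r)}) \subseteq S_2^{(n-r+2, 1^{r-2})}$.

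With both maps in hand, $\phi_r \circ \iota_r = 0$ follows from a direct computation on standard polytabloids, and exactness of the sequence is then forced by the dimension identity
\[
\binom{n-1}{r} = \left(\binom{n}{r} - \binom{n}{r-1}\right) + \binom{n-1}{r-2},
\]
which is two applications of Pascal's rule: once $\iota_r$ is known to be injective and its image lies inside $\ker(\phi_r)$, comparing dimensions forces equality on both sides.

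The main obstacle will be the combinatorial verification that $\iota_r$ and $\phi_r$ are genuinely homomorphisms of Specht modules: $\iota_r$ must respect both the column and two-row Garnir relations of $S_2^{(n-r, r)}$, while $\phi_r$ must vanish on the defining relations of $S_2^{(n-r, 1^r)}$ in its permutation module and have image inside the target Specht submodule. These are delicate tabloid-level manipulations in which characteristic $2$ is essential: coefficients that would obstruct the construction in odd characteristic vanish modulo $2$, in the same spirit as the parity argument ($n$ even) used in the proof of \Cref{homs_2_parts}.
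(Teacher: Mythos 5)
Your approach is genuinely different from the one in the paper. The paper constructs the entire chain $0 = M_{-1} \leq M_0 \leq \cdots \leq M_{\lfloor r/2\rfloor} = S_2^{(n-r,1^r)}$ at once, giving explicit spanning sets for each $M_k$ as sums of hook polytabloids indexed by tuples, then building maps $\theta_k\colon S_2^{(n-r+2k,r-2k)} \to M_k/M_{k-1}$ that are surjective by construction (their images visibly contain the given spanning sets of $M_k/M_{k-1}$), and finally invoking the dimension identity $\dim S_2^{(n-r,1^r)} = \sum_k \dim S_2^{(n-r+2k,r-2k)}$ from \cite[Corollary~2.7]{murphy1982submodule} to force each $\theta_k$ to be an isomorphism. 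You instead propose an induction on $r$ through a single short exact sequence
\[
0 \longrightarrow S_2^{(n-r,r)} \xrightarrow{\ \iota_r\ } S_2^{(n-r,1^r)} \xrightarrow{\ \phi_r\ } S_2^{(n-r+2,1^{r-2})} \longrightarrow 0,
\]
which, if true, is strictly stronger than \Cref{FPL}: it identifies the quotient $S_2^{(n-r,1^r)}/\iota_r\bigl(S_2^{(n-r,r)}\bigr)$ as a hook Specht module, something the paper never asserts.

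The gap is in your exactness argument. From injectivity of $\iota_r$ and $\phi_r\circ\iota_r=0$ you learn only $\operatorname{im}(\iota_r)\subseteq\ker(\phi_r)$, hence $\dim\ker(\phi_r)\geq\dim S_2^{(n-r,r)}$ and $\dim\operatorname{im}(\phi_r)\leq\dim S_2^{(n-r+2,1^{r-2})}$. But this last inequality is vacuous, since $\operatorname{im}(\phi_r)$ is a submodule of $S_2^{(n-r+2,1^{r-2})}$ regardless; the Pascal identity does not upgrade either containment to an equality. It remains entirely possible, on the strength of what you have checked, that $\ker(\phi_r)$ strictly contains $\operatorname{im}(\iota_r)$ and that $\phi_r$ is not onto, in which case the inductive step collapses. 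You need a separate proof that $\phi_r$ surjects onto $S_2^{(n-r+2,1^{r-2})}$ --- for instance by showing that $\phi_r(e_s)$ generates the (cyclic) target module for some generating polytabloid $e_s$. Contrast this with the paper, whose $\theta_k$ are surjective tautologically. The remaining work you flag (that $\iota_r$ respects the Garnir and column relations of $S_2^{(n-r,r)}$, and that $\phi_r$ restricted to $S_2^{(n-r,1^r)}$ lands in the Specht submodule of the target permutation module) is of the same combinatorial flavour as the verifications the paper actually carries out; it is not a gap in principle, but it is where the genuine effort lies and cannot be elided.
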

\begin{proof}

    The main part of the proof is to build the submodules $0 = M_{-1} \leq M_0 \leq \ldots \leq M_{\floor{r/2}} = S_2^{(n-r,1^r)},$ where each quotient $M_k/M_{k-1}$ is isomorphic to $S_2^{(n-r+2k,r-2k)}.$

    We first define $M_k$ as a vector subspace of $S^{(n-r,1^r)}$ in the following way.

    If $r > 2k,$ let $\bar{X}_k$ be the set of ordered $(2r-k)$-tuples \[X = (a_1, b_1, \ldots, a_{r-2k}, b_{r-2k}, c_1, d_1, e_1, \ldots, c_{k}, d_k, e_k),\] consisting of different elements of $\{1, \ldots, n\}.$ Then for each $X \in \Bar{X_k},$ let $T_X$ be the set of column-standard $(n-r,1^r)$-tableaux $t$ such that the first column of $t$ contains
    \begin{itemize}
        \item $a_1$ and $b_1,$
        \item exactly one of $a_i$ or $b_i$ for each $i = 2, \ldots, r-2k,$
        \item exactly two of $c_j, d_j, e_j,$ for each $j = 1, \ldots, k.$
    \end{itemize}
    
    Then $M_k$ is the vector subspace of $S_2^{(n-r,1^r)}$ generated by $\{\sum_{t \in T_X} e_t \, | \, X \in \Bar{X}_k\}.$

    We need to show that $M_{k-1} \subseteq M_k.$ First, fix an \[X = (a_1, b_1, \ldots, a_{r-2k+2}, b_{r-2k+2}, c_1, d_1, e_1, \ldots, c_{k-1}, d_{k-1}, e_{k-1}) \in \Bar{X}_{k-1},\] we want to show that $\sum_{t \in T_X} e_t \in M_k.$ Let
    
    \begin{itemize}
        \item $Y = (a_1, b_1, \ldots, a_{r-2k}, b_{r-2k}, c_1, d_1, e_1, \ldots, c_{k-1}, d_{k-1}, e_{k-1}, a_{r-2k+1}, b_{r-2k+1}, a_{r-2k+2})$
        \item $Y' = (a_1, b_1, \ldots, a_{r-2k}, b_{r-2k}, c_1, d_1, e_1, \ldots, c_{k-1}, d_{k-1}, e_{k-1}, a_{r-2k+1}, b_{r-2k+1}, b_{r-2k+2}),$
    \end{itemize}
    
     then it is a straightforward calculation to show that \[\sum_{t \in T_X}e_t = \sum_{t \in T_Y}e_t + \sum_{t\in T_{Y'}}e_t,\] and hence $M_{k-1}$ is a vector subspace of $M_k.$

    If $r = 2k,$ we define $M_k$ in a way similar to above. Let $\Bar{X}_k$ be the set of ordered $(3k+1)$-tuples \[X = (a_1, c_1, d_1, e_1, \dots, c_k, d_k, e_k),\] consisting of different elements of $\{1, \ldots, n\}.$ Then for each $X \in \Bar{X}_k,$ let $T_X$ be the set of column-standard $(n-r,1^r)$-tableaux $t$ such that the first column of $t$ contains 
    \begin{itemize}
        \item $a_1,$
        \item exactly two of $c_j, d_j, e_j$ for each $j = 1, \ldots, k.$
    \end{itemize}

    Then $M_k$ is the vector subspace of $S^{(n-r,1^r)}$ generated by $\{\sum_{t \in T_X} e_t | X \in \Bar{X}_k\}.$ As before, we need to show that $M_{k-1} \subseteq M_k.$ If \[X = (a_1, b_1, a_2, b_2, c_1, d_1, e_1, \ldots, c_{k-1}, d_{k-1}, e_{k-1}) \in \Bar{X}_k,\] we want to show that $\sum_{t \in T_X} e_t \in M_k.$ Let
    \begin{itemize}
        \item $Y = (a_2, c_1, d_1, e_1, \ldots, c_{k-1}, d_{k-1}, e_{k-1}, a_1, b_1, b_2),$
        \item $Z = (b_2, c_1, d_1, e_1, \ldots, c_{k-1}, d_{k-1}, e_{k-1}, a_1, b_1, a_2),$
    \end{itemize}

and then it is a straightforward calculation to show that \[\sum_{t \in T_X} e_t = \sum_{t \in T_{Y}}e_t + \sum_{t \in T_{Z}}e_t\,\] and hence $M_{k-1} \subseteq M_k.$

It is clear that each $M_k$ is actually a submodule of $S_2^{(n-r,1^r)},$ as $\Bar{X}_k$ is fixed under the natural action of $S_n$ on tuples of elements $\{1, \ldots, n\},$ and for all $\pi \in S_n,$ \[\pi \sum_{t \in T_X} e_t = \sum_{t \in T_{\pi X}}e_t,\]  so the given spanning set for $M_k$ is closed under the action of $S_n,$ and hence so is $M_k.$
    
Now we construct homomorphisms $\theta_k, 0 \leq k \leq r/2,$ where for each $k,$ \[\theta_k: S_2^{(n-r+2k, r-2k)}\to \frac{M_k}{M_{k-1}},\] and we set $M_{-1} =0.$

Then, as long as $M_{\floor{r/2}} = S_2^{(n-r,1^r)},$ we'll have a filtration of $S_2^{(n-r,1^r)}$ in which the factors are quotients of $\{S_2^{(n-r+2k,r-2k)} \,|\, 0 \leq k \leq 2r \}.$ As the dimensions add up \cite[Corollary 2.7]{murphy1982submodule}, each $\theta_k$ must be surjective and hence we get the desired result.

We define $\theta_k$ in the following way. For an $(n-r+2k, r-2k)$-tableau $s,$ let $a_i$ be the entry in the $i$th column first row, and $b_i$ be the entry in the $i$th column second row. Then if $r > 2k$ set $X = (a_1, b_1, \ldots, a_{r-2k}, b_{r-2k}, c_1, d_1, e_1, \ldots, c_k, d_k, e_k)$ for some choice of $c_j, d_j, e_j$ not equal to $a_1, b_1, \ldots, a_{r-2k}, b_{r-2k}.$ If $r = 2k,$ set $X = (c_1, d_1, e_1, \ldots, c_k, d_k, e_k)$ for some choice of $c_j, d_j, e_j.$ Then define the maps \[\theta_k(e_s) := \sum_{t \in T_X} e_t + M_{k-1},\] and extend linearly.

First we will show that this is independent of the choice of $c_j, d_j, e_j,$ then we will show that $\theta_k$ is a $KS_n$-module homomorphism, and that $M_{\floor{r/2}} = S_2^{(n-r,1^r)}.$

To show that $\theta_k$ is independent of the choice of $c_j, d_j, e_j,$ we fix the $(n-r+2k,r-2k)$-tableau $s,$ and fix $X = (a_1, b_1, \ldots, a_{r-2k}, b_{r-2k}, c_1, d_1, e_1, \ldots, c_k, d_k, e_k)$ for some choice of $c_j, d_j, e_j$ not equal to $a_1, b_1, \ldots, a_{r-2k}, b_{r-2k}.$ It is clear that relabelling $c_j, d_j$ and $e_j$ for some fixed $j$ has no impact on the image of $\theta_k,$ and neither does relabelling some $\{c_j, d_j, e_j\}$ with $\{c_{j'}, d_{j'}, e_{j'}\}$

It suffices to show that a different choice of say $e_1$ to be some $l\leq n, l \not \in X$ is well-defined (noting that such an $l$ exists as $k > 0$ and $2r-k <n$).  

For $r < 2k,$ we do the following:

Let $\theta_k^1(e_s)$ be the sum as defined above, and let $\theta_k^2(e_s)$ be the same sum but with $l$ in place of $e_1.$ Now define $s'$ to be the $(n-r+2(k-1), r-2(k-1))$-tableau which has $a_i, b_i$ in the first $r-2k$ columns, $c_1, d_1$ in the $r-2k+1$st column, and $e_1, l$ in the $r-2k+2$nd column. Then $\theta_k^1(e_s) + \theta_k^2(e_s) = \theta_{k-1}(e_{s'}).$

When $r =2k,$ the calculation is slightly different:

Let $\theta_k^1(e_s)$ be the sum as defined above, and let $\theta_k^2(e_s)$ be the same sum but with $l$ in place of $e_1.$ Now define $s'$ to be the $(n-r+2(k-1), r-2(k-1))$-tableau which keeps $a_1$ and has $b_1 = c_1, a_2 = e_1, b_2 = l,$ and let $s''$ be the $(n-r+2(k-1), r-2(k-1))$-tableau which keeps $a_1$ and has $b_1 = d_1, a_2 = e_1, b_2 = l.$ Then $\theta_k^1(e_s) + \theta_k^2(e_s) = \theta_{k-1}(e_{s'}) + \theta_{k-1}(e_{s''}).$ 

Hence each $\theta_k(e_s)$ is well-defined, and we extend $\theta_k$ linearly. Next, we shall show that $\theta_k$ is well-defined as a $KS_n$-homomorphism. First we do this in the case that $r \neq 2k.$ Clearly if $\sigma$ is a column stabiliser of $s,$ then $(1 + \sigma)\theta_k(e_s) = 0.$ Now, as $S_2^\lambda$ is cyclic, we can choose $c_1 = a_{r-2k + 1},$ and fix $\Tilde{t}$ to be the column-standard tableau that has in the first column $b_1, a_i$ for $i = 1, \ldots, r-2k,$ and $c_j, d_j$ for $j= 1, \ldots, k.$ For $i = 2, \ldots, r-2k$ define $\hat{h}_i = (1 + (a_i, b_i))$ and for $j = 1, \ldots, k$ define $\check{h}_j = (1 + (c_j, d_j, e_j) + (c_j, e_j, d_j)).$ Then,

\begin{align*}
    \theta_k(e_s) &= \sum_{t\in T_X}e_t + M_{k-1}\\
    &= \prod_{i=2}^{r-2k}\hat{h}_i\prod_{j=1}^k\check{h}_j e_{\Tilde{t}} + M_{k-1}.
\end{align*}

Note that the collection of $\hat{h}_i$ and $\check{h}_j$ all pairwise commute. 

There are $6$ types of Garnir relations:
\begin{enumerate}
    \item The Garnir relation on the set $\{a_1, b_1, a_2\}$ which has the Garnir element $g_{1,a} := 1 + (b_1, a_2) + (a_1, b_1, a_2).$
    \item The Garnir relation on the set $\{b_1, a_2, b_2\}$ which has the Garnir element $g_{1,b} := 1 + (b_1, b_2) + (b_1, a_2, b_2).$
    \item For $2 \leq i \leq r-2k-1,$ the Garnir relation on the set $\{a_i, b_i, a_{i+1}\}$ which has the Garnir element $g_{i, a} := 1 + (b_i, a_{i+1}) + (a_i, b_i, a_{i+1}).$
    \item For $2 \leq i \leq r-2k-1,$ the Garnir relation on the set $\{b_i, a_{i+1}, b_{i+1}\}$ which has the Garnir element $g_{i,b} := 1 + (b_i, b_{i+1}) + (b_i, a_{i+1}, b_{i+1}).$ 
    \item If $k\neq 0$ or $n \neq 2r,$ the Garnir relation on the set $\{a_{r-2k}, b_{r-2k}, a_{r-2k+1}\}$ which has the Garnir element $g_{r-2k, a} := 1 + (b_{r-2k}, a_{r-2k+1}) + (a_{r-2k}, b_{r-2k}, a_{r-2k+1}).$
    \item For $i > r-2k,$ the Garnir relation on the set $\{a_i,a_{i+1}\}$ which has the Garnir element $1 + (a_i, a_{i+1}).$
\end{enumerate}

We will show that each of these Garnir elements also eliminates $\theta_k(e_s)$ and hence the homomorphism is well defined.

\begin{enumerate}
    \item Note that $g_{1,a}$ commutes with every $\hat{h}_i$ and $\check{h}_j$ except for $\hat{h}_2 = (1 + (a_2, b_2)),$ and that $g_{1,a}\hat{h}_2$ commutes with every other $\hat{h}_i$ and $\check{h}_j.$ Then

    \begin{align*}
        g_{1,a}\hat{h}_2 &= (1 + (b_1, a_2) + (a_1,b_1,a_2))(1 + (a_2,b_2))\\
        &= (1 + (b_1, a_2) + (a_1, b_1, a_2) + (a_2, b_2) + (b_1, a_2, b_2) + (a_1, b_1, a_2, b_2)),
    \end{align*}

    Let $e_{t/x}$ be the polytabloid for the tableau which has in the first column \[\{a_1, b_1, b_2, a_2, \ldots, a_{r-2k}, c_1, d_1, \ldots, c_k, d_k\}\setminus\{x\}.\]

    Then \[g_{1,a}\hat{h}_2 e_{\Tilde{t}} = \sum_{x \not\in \{a_1, b_1, b_2, a_2\}} e_{t/x},\]

    and for each $x,$ if $x = a_i,$ then $\hat{h}_i g_{1,a}\hat{h}_2 e_{t/x}  = 0,$ and if $x = c_j$ or $d_j,$ then $\check{h}_j g_{1,a}\hat{h}_2 e_{t/x}  = 0.$

    \item Similar to above.

    \item Note that for $2 \leq i \leq r-2k,$ $g_{i,a}$ commutes with every $\hat{h}_j$ except for $\hat{h}_i$ and $\hat{h}_{i+1}.$ Then

    \begin{align*}
        g_{i, a}\hat{h}_i\hat{h}_{i+1} &=\\
        &= (1 + (b_i, a_{i+1}) + (a_i, b_i, a_{i+1}))(1 + (a_i, b_i))(1 + (a_{i+1}, b_{i+1}))\\
        &= 1 + (b_i, a_{i+1}) + (a_i, b_i, a_{i+1}) + (a_i, b_i) + (a_i, a_{i+1}, b_i) + (a_i, a_{i+1}) \\
        &+ (a_{i+1}, b_{i+1}) + (b_i, a_{i+1}, b_{i+1}) + (a_i, b_i, a_{i+1}, b_{i+1}) + (a_i, b_i)(a_{i+1}, b_{i+1}) \\
        &+ (a_i, a_{i+1}, b_{i+1}, b_i) + (a_i, a_{i+1}, b_{i+1}),
    \end{align*}    
    and
    \begin{align*}
        g_{i, a}\hat{h}_i\hat{h}_{i+1}e_{\Tilde{t}} = 0.
    \end{align*}
    
    \item Similar to above.

    \item Note that $g_{r-2k,a}$ commutes with every $\hat{h}_{i}$ except for $\hat{h}_{r-2k}$ and with every $\check{h}_j$ except for $\check{h}_1,$ as $a_{r-2k+1} = c_1.$ Then

    \begin{align*}
        &g_{r-2k,a}\hat{h}_{r-2k}\check{h}_1 = \\
        &= (1 + (b_{r-2k}, c_1) + (a_{r-2k}, b_{r-2k}, c_1))(1 + (a_{r-2k}, b_{r-2k})(1 + (c_1, d_1, e_1) + (c_1, e_1, d_1)))\\
        &= 1 + (b_{r-2k}, c_1) + (a_{r-2k, b_{r-2k}, c_1}) + (a_{r-2k}, b_{r-2k}) + (a_{r-2k}, c_1, b{r-2k}) + (a_{r-2k}, c_1) \\&+ (c_1, d_1, e_1) + (b_{r-2k}, c_1, e_1, d_1) + (a_{r-2k}, b_{r-2k}, c_1, d_1, e_1) + (a_{r-2k}, b_{r-2k})(c_1, d_1, e_1) \\& + (a_{r-2k}, c_1, d_1, e_1, b_{r-2k}) + (a_{r-2k}, c_1, d_1, e_1) + (c_1, e_1, d_1) + (b_{r-2k}, c_1, e_1, d_1) \\&+ (a_{r-2k}, b_{r-2k}, c_1, e_1, d_1) + (a_{r-2k}, b_{r-2k})(c_1, e_1, d_1) + (a_{r-2k}, c_1, e_1, d_1, b_{r-2k}) \\&+ (a_{r-2k}, c_1, e_1, d_1).
    \end{align*}
    and
    \begin{align*}
        g_{r-2k, a}\hat{h}_{r-2k}\check{h}_1 e_t = 0.
    \end{align*}

    \item The cases where $\{a_i, a_{i+1}\} \cap \{c_j, d_j, e_j\}_{j=1}^k = 1$ or $2$ have already been dealt with. If $(a_i, a_{i+1})$ permutes two entries which are not in $X,$ then $(1+(a_i, a_{i+1}))$ commutes with every $\hat{h}_i$ and every $\check{h}_j,$ and $(1+(a_i, a_{i+1}))e_{\tilde{t}} = 0$ so $(1+(a_i, a_{i+1}))\theta_k(e_s) = 0$ as required.
    
\end{enumerate}

In the case $r = 2k,$ it is sufficient to check the Garnir relation $1 +(a_1, c_1)$ as we have already shown that $(1 + (e_1, l))\theta_k(e_s) \in M_{k-1}.$ One can verify that $(1 + (a_1, c_1)) \theta_k(e_s) \in M_{k-1}.$

Hence $\theta_k$ is a $KS_n$ homomorphism, because the construction is invariant under the action of $S_n.$ That is, for all $\pi \in S_n, \theta_k(\pi e_s) = \theta_k(e_{\pi s}) = \pi \theta_k(e_s).$

All that remains is to show that $M_{\floor{r/2}} = S_2^{(n-r, 1^r)}.$ We split this into the case that $r = 2k$ and $r = 2k+1$.

Fix $k$ such that $r = 2k+1.$ We want to show that $M_k = S_2^{(n-r, 1^r)}.$ It suffices to show that $e_t \in M_k,$ for some $(n-r, 1^r)$-tableau $t$.

Fix a subset $X = \{a_1, b_1, c_1, d_1, e_1, \ldots, c_k, d_k, e_k\}.$ Then $\sum_{t \in T_X} e_t \in M_k$ by definition. Fix $\tilde{t}$ to be a tableau which has in its first column $\{a_1, b_1, c_1, d_1, \ldots, c_k, d_k\},$ then \[\sum_{t\in T_X} e_t = \prod_{j=1}^k \check{h}_j e_{\Bar{t}}.\]

Now write $\Tilde{h}_j = (1 + (b_1, c_j) + (b_1, d_j))$ and consider the expression \[\Tilde{h}_k \cdots \Tilde{h}_1 \prod_{j=1}^k \check{h}_j e_{\Bar{t}}.\] Before simplifying the expression, we shall observe that $\Tilde{h}_j$ commutes with all $\check{h}_{j'}$ except $j' = j,$ and the fact that \[\Tilde{h}_j\check{h}_j e_{\Bar{t}} = e_{\Bar{t}}.\]

Hence \begin{align*}
    &\Tilde{h}_k \cdots \Tilde{h}_1 \prod_{j=1}^k \check{h}_j e_{\Bar{t}} \\
    =& \Tilde{h}_k \cdots \Tilde{h}_2 (\Tilde{h}_1\check{h}_1) \prod_{j=2}^k \check{h}_j e_{\Bar{t}}\\
    =& \Tilde{h}_k \cdots \Tilde{h}_2 \prod_{j=2}^k \check{h}_j (\Tilde{h}_1\check{h}_1)e_{\Bar{t}}\\
    =& \Tilde{h}_k \cdots \Tilde{h}_2 \prod_{j=2}^k \check{h}_j e_{\Bar{t}}\\
    &\cdots\\
    =& e_{\Bar{t}}.
\end{align*}

So if $r = 2k+1,$ then $M_k = S_2^{(n-r,1^r)}.$

Now fix $k$ such that $r = 2k.$ As before, we want to show that $M_k = S_2^{(n-r, 1^r)}.$ It suffices to show that $e_t \in M_k,$ for some $t$ an $(n-r, 1^r)$-tableau. 

Fix $X = \{a, c_1, d_1, e_1, \ldots, c_k, d_k, e_k\}.$ Let $Y = Y_{k-1} \cup \{c_1, d_1, e_1, \ldots, c_{k-1}, d_{k-1}, e_{k-1}\}$ (so we consider $a_1 = a, b_1 = e_k, a_2 = c_k, b_2 = d_k$). Write $\Bar{t}$ for the tableau which has in the first column $a, c_1, d_1, \ldots, c_{k-1}, d_{k-1}, c_k, e_k.$ Then \[\sum_{t \in T_X} e_t + \sum_{t \in T_Y}e_t = ((c_k, d_k) + (c_k,d_k,e_k) + (c_k,e_k,d_k)) \prod_{j = 1}^{k-1} e_{\Bar{t}}.\]

Noting that $((c_k, d_k) + (c_k,d_k,e_k) + (c_k,e_k,d_k))$ commutes with $\check{h}_j$ for all $j$ from $1$ to $k-1,$ and writing $t^*$ for the tableau which has in its first column $\{a, c_j, d_j\}$ for $j = 1, \ldots, k,$ one sees that \[((c_k, d_k) + (c_k,d_k,e_k) + (c_k,e_k,d_k)) \prod_{j = 1}^{k-1} e_{\Bar{t}} = \prod_{j = 1}^{k-1} e_{t^*}.\]

Writing $\Tilde{h}_j = 1 + (d_k, c_j) + (d_k, d_j)$ for $j = 1, \ldots, k-1,$ and simplifying in a way similar to the previous case, we get \[\Tilde{h}_{k-1} \cdots \Tilde{h}_{1} \prod_{j = 1}^{k-1} e_{t^*} = e_{t^*}.\]

So if $r = 2k,$ then $M_k = S_2^{(n-r,1^r)}.$

So we have that \[\dim(S_2^{(n-r,1^r)}) = \sum_{k=0}^{r/2}\dim(M_k / M_{k-1}) \leq \sum_{k = 0}^{r/2} \dim(S_2^{(n-r+2k, r-2k)}) = \dim(S_2^{(n-r,1^r)}),\] where the first equality comes from the filtration, the inequality comes from the homomorphisms, and the final equality comes from \cite[Corollary 2.7]{murphy1982submodule}.

Hence $S_2^{(n-r, 1^r)}$ has a filtration via hook Specht modules labeled by 2-part partitions.
\end{proof}

\begin{example}\label{example_of_filtration}
    $S_2^{(6,1^4)}$ has a filtration, from bottom to top, via $S_2^{(6,4)}, S_2^{(8,2)}$ and $S_2^{(8)}.$ More explicitly, we have the submodules $0 = M_{-1} \leq M_0 \leq M_1 \leq M_2 = S_2^{(6,1^4)},$ where 
    \begin{enumerate}
        \item $M_0$ is generated by \[x = (1+(34))(1+(56))(1+78)e\, \young(14689<10>,2,3,5,7),\]
        \item $M_1$ is generated by \[y = (1+(34))(1+(567)+(576))e\, \young(14789<10>,2,3,5,6),\]
        \item $M_2$ is generated by \[z = (1+(234)+(243))(1+(567)+(576))e\, \young(14789<10>,2,3,5,6)\]
    \end{enumerate}

    and we have the following homomorphisms:
    \begin{enumerate}
        \item $\theta_0 : S_2^{(6,4)} \to M_0,$ \[\theta_0(e\, \young(13579<10>,2468)) = x,\]
        \item $\theta_1 : S_2^{(8,2)} \to M_1/M_0,$ \[\theta_1(e\, \young(135789<10>,24)) = y + M_0,\]
        \item $\theta_2 : S_2^{(10)} \to M_2/M_1,$ \[\theta_2(e\, \young(123456789<10>)) = z + M_1.\]
    \end{enumerate}

    We also have the following submodule lattice. In the diagram below, the vertices represent submodules of $S_2^{(6,1^4)},$ and the nodes are labelled by the dimension of the submodule, with an additional subscript if there are multiple submodules of the same dimension. The directed edge from a vertex $u$ to a vertex $v$ indicates that the submodule associated to $u$ is a maximal submodule of the submodule associated to $v.$ The submodules $M_{-1}, M_0, M_1$ and $M_2$ are all labelled, and the subquotients $M_0/M_{-1}, M_1/M_0$ and $M_2/M_1$ have nodes and edges coloured purple, blue, and red respectively. For these subquotients, we also label the edges by which irreducible representation they are isomorphic to.

    \scalebox{0.8}{
    \begin{tabular}{|c|}
        \hline
        \begin{tikzpicture}[main/.style = {draw, circle}, scale = 3]
        \node at (0,-1/2) {The submodule lattice for $S^{( 6, 1^4)}_2$};
        
        \node[main, purple] (0) at (0,0) {$0$};
        \node[main] (8) at (-1/2,1) {$8$};
        \node[main] (9) at (-1,2) {$9$};
        \node[main, purple] (48) at (1/2,1) {$48$};
        \node[main] (56) at (0,2) {$56$};
        \node[main] (57) at (-3/2,3) {$57$};
        \node[main, purple] (74) at (1,2) {$74$};
        \node[main] (75) at (-1/2,3) {$75$};
        \node[main] (82) at (1/2,3) {$82$};
        \node[main] (83_1) at (-2,4) {$83_1$};
        \node[main] (83_2) at (-1,4) {$83_2$};
        \node[main] (83_3) at (0,4) {$83_3$};
        \node[main] (84) at (-3/2,5) {$84$};
        \node[main, purple] (90) at (3/2,3) {$90$};
        \node[main] (91) at (1,4) {$91$};
        \node[main, blue] (98) at (2,4) {$98$};
        \node[main] (99_3) at (-1/2,5) {$99_3$};
        \node[main] (99_2) at (1/2,5) {$99_2$};
        \node[main, blue] (99_1) at (3/2,5) {$99_1$};
        \node[main] (100) at (-1/2,6) {$100$};
        \node[main, blue] (125) at (1/2,6) {$125$};
        \node[main, red] (126) at (0,7) {$126$};
        
        \draw[->] (0) edge (8);
        \draw[->] (0) edge [purple, "$D_2^{(7,3)}$"', pos = 0.5] (48);
        
        \draw[->] (8) edge (9);
        \draw[->] (8) edge (56);
        
        \draw[->] (9) edge (57);
        
        \draw[->] (48) edge (56);
        \draw[->] (48) edge [purple, "$D_2^{(8,2)}$"', pos = 0.5] (74);
        
        \draw[->] (56) edge (57);
        \draw[->] (56) edge (82);
        
        \draw[->] (57) edge (83_1);
        
        \draw[->] (74) edge (75);
        \draw[->] (74) edge (82);
        \draw[->] (74) edge [purple, "$D_2^{(6,4)}$"', pos = 0.5] (90);
        
        \draw[->] (75) edge (83_2);
        \draw[->] (75) edge (91);
        
        \draw[->] (82) edge (83_3);
        \draw[->] (82) edge (83_1);
        \draw[->] (82) edge (83_2);
        \draw[->] (82) edge (98);
        
        \draw[->] (83_1) edge (84);
        \draw[->] (83_1) edge (99_1);
        
        \draw[->] (83_2) edge (84);
        \draw[->] (83_2) edge (99_2);
        
        \draw[->] (83_3) edge (84);
        \draw[->] (83_3) edge (99_3);
        
        \draw[->] (84) edge (100);
        
        \draw[->] (90) edge (91);
        \draw[->] (90) edge [blue, "$D_2^{(9,1)}$"', pos = 0.5] (98);
        
        \draw[->] (91) edge (99_2);
        
        \draw[->] (98) edge (99_3);
        \draw[->] (98) edge [blue, "$D_2^{(10)}$"', pos = 0.5] (99_1);
        \draw[->] (98) edge (99_2);
        
        \draw[->] (99_1) edge (100);
        \draw[->] (99_1) edge [blue, "$D_2^{(8,2)}$"', pos = 0.5] (125);
        
        \draw[->] (99_2) edge (100);
        
        \draw[->] (99_3) edge (100);
        
        \draw[->] (100) edge (126);
        
        \draw[->] (125) edge [red, "$D_2^{(10)}$"', pos = 0.5] (126);

        \node at (0.3,7) {$M_2$};
        \node at (0.8,6) {$M_1$};
        \node at (1.8,3) {$M_0$};
        \node at (0.3, 0) {$M_{-1}$};
        \end{tikzpicture}\\
        \hline
    \end{tabular}
    }
    
\end{example}

The filtration above allows us to write out explicitly the decomposition numbers for hook Specht modules in characteristic $2.$

\begin{lemma}
    For $a, b$ integers, define 
    \[f(a,b) = 
    \begin{cases}
       1 & \text{ if } a \geq b \geq 0 \text{ and } a \supseteq_2 b \\  
       0 & \text{ otherwise. } 
    \end{cases}
    \]
    Then \[ [S_2^{(n-r, 1^r)} : D_2^{(n-j,j)}]  = \sum_{k \geq 0} f(n+1-2j, r-2k-j).\]
\end{lemma}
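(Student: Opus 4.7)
The plan is to combine the two main tools that the paper has just set up: the filtration of $S_2^{(n-r,1^r)}$ by two-part Specht modules from \Cref{FPL}, and the explicit description of composition factors of two-part Specht modules from \Cref{2_part_submod_structure_new_proof}. Since composition multiplicities are additive across short exact sequences, the filtration immediately yields
\[ [S_2^{(n-r, 1^r)} : D_2^{(n-j,j)}] = \sum_{k=0}^{\lfloor r/2 \rfloor} [S_2^{(n-r+2k, r-2k)} : D_2^{(n-j,j)}]. \]
Thus it suffices to evaluate each summand and then reindex into the form stated in the lemma.

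For each $k$, I would apply \Cref{2_part_submod_structure_new_proof} with $(\lambda_1, \lambda_2) = (n-r+2k,\, r-2k)$. The composition factor $D_2^{(n-j,j)}$ corresponds to the shift $d = r-2k-j$, and the decisive observation is the (short) computation
\[\alpha + 2d \;=\; \bigl((n-r+2k)-(r-2k)+1\bigr) + 2(r-2k-j) \;=\; n+1-2j,\]
which is independent of $k$. Consequently, assuming $(n-j,j)$ is $2$-regular (otherwise $D_2^{(n-j,j)}$ does not exist and both sides of the claim are interpreted as zero), the $k$-th summand equals $1$ precisely when $0 \leq r-2k-j \leq r-2k$ and $(n+1-2j) \supseteq_2 (r-2k-j)$, and equals $0$ otherwise.

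The last step is to reconcile these conditions with the definition of $f$. The requirement $r-2k-j \geq 0$ and the containment $(n+1-2j) \supseteq_2 (r-2k-j)$ already appear verbatim in $f(n+1-2j,\, r-2k-j)$; the remaining inequality $n+1-2j \geq r-2k-j$ needed by $f$ must then be shown to be automatic. This follows from the hook hypothesis $r \leq n-r$ of \Cref{FPL}: whenever $d = r-2k-j \geq 0$ we have $j \leq r-2k$, so
\[r-2k+j \;\leq\; 2(r-2k) \;\leq\; 2r \;\leq\; n \;<\; n+1.\]
Finally, for any $k > \lfloor r/2 \rfloor$ we have $r-2k < 0$ and hence $r-2k-j < 0$, which forces $f(n+1-2j, r-2k-j) = 0$; so the finite sum from the filtration agrees with the unrestricted sum $\sum_{k\geq 0} f(n+1-2j, r-2k-j)$ in the statement.

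The proof is essentially a direct assembly of existing results, so no step is truly an obstacle; the main content is the substitution $\alpha+2d = n+1-2j$, which is what causes the coefficient $f(n+1-2j, r-2k-j)$ to have its first argument independent of $k$. The only bookkeeping points are the $2$-regularity of $(n-j,j)$ and matching the range of summation to the definition of $f$, both of which are handled as above.
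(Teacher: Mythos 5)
Your proof is correct and follows essentially the same route as the paper: split the multiplicity over the filtration of \Cref{FPL} and compute each summand from the known two-part decomposition numbers. The paper cites James's Theorem 24.15 directly whereas you derive the same formula from \Cref{2_part_submod_structure_new_proof}, and you spell out the bookkeeping (the identity $\alpha+2d=n+1-2j$ and the automatic inequality $n+1-2j\geq r-2k-j$) that the paper leaves implicit; the only small slip is the parenthetical claim that when $(n-j,j)$ is $2$-singular ``both sides are interpreted as zero''---the right-hand side need not vanish in that case, so the correct reading is simply that the lemma is only asserted for $2$-regular $(n-j,j)$.
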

\begin{proof}
    This follows immediately from \Cref{FPL} and from \cite[Theorem 24.15]{James1978} (noting that the definition of $f$ in this paper differs slightly to that in the sited result) which states that $[S_2^{(n-r,r)} : D_2^{(n-j,j)}] = f(n+1-2j,r-j).$
\end{proof}

\begin{corollary}
    If $n$ is odd and $r-j$ is odd, then \[ [S_2^{(n-r, 1^r)} : D_2^{(n-j,j)}]  = 0.\]
\end{corollary}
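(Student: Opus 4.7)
The plan is to invoke the preceding lemma and then chase parities. By the lemma,
\[ [S_2^{(n-r, 1^r)} : D_2^{(n-j,j)}] = \sum_{k \geq 0} f(n+1-2j,\, r-2k-j), \]
so it suffices to prove that every term of this sum vanishes under the hypotheses that $n$ is odd and $r - j$ is odd.

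The key observation is about the $2$-adic ``containment'' relation $\supseteq_2$ of \Cref{contains}: if $a \supseteq_2 b$, then the $0$th binary digit of $b$ lies in $\{0, a_0\}$, so in particular $b$ odd forces $a$ odd. I would first note that the parity of $n+1-2j$ is determined solely by $n+1$, which is even since $n$ is odd. Next, the parity of $r - 2k - j$ equals the parity of $r - j$ (independent of $k$), which is odd by hypothesis. Therefore, in every summand, the first argument of $f$ is even while the second is odd, so $a \not\supseteq_2 b$ and the defining condition for $f(a,b) = 1$ fails.

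Hence $f(n+1-2j, r-2k-j) = 0$ for all $k \geq 0$, and the sum is zero. There is no real obstacle; the content of the corollary is almost entirely a binary-digit parity check once the lemma is in hand.
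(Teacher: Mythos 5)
Your proof is correct and takes essentially the same approach as the paper's: the parity mismatch — $n+1-2j$ even, $r-2k-j$ odd — forces the $2$-adic containment to fail (looking at the $0$th binary digit), so every term $f(n+1-2j, r-2k-j)$ vanishes. The paper also notes an alternative justification via $2$-cores, but the parity-of-digits argument you give is the one the paper leads with.
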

\begin{proof}
    This follows immediately from the above as $n+1-2j$ is even and $r-2k-j$ is odd for all $k.$ It also follows by the fact that $(n-r,1^r)$ and $(n-j,j)$ have different $2$-cores \cite[Theorem 24.11]{James1978}.
\end{proof}

\newpage
\section{Another Filtration of Hook Specht Modules}\label{Another Filtration of Hook Specht Modules}

In the previous section, we gave a filtration of hook Specht modules $S_2^\lambda$ when $\lambda = (n-r,1^r) \vdash n$ for $n-r \geq r.$ 

In the case $n$ is odd, we get the following nice result about the dual of a hook Specht module.

\begin{theorem}\label{odd_self_dual}
    Let $n$ be odd and $1 \leq r < n.$ Then $S_2^{(n-r,1^r)}$ is self-dual. That is, $S_2^{(n-r,1^r)} \isom S_2^{(r+1, 1^{n-r-1})}.$
\end{theorem}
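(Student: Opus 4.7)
The plan is to reduce the claim to a statement of self-duality via the classical conjugate-partition duality for Specht modules, and then establish the resulting self-duality using the filtration developed in the previous section together with a structural feature that is specific to $n$ being odd. First, I would invoke the well-known identification $(S^{\lambda})^{\ast} \cong S^{\lambda'} \otimes \operatorname{sgn}$ which holds over any field (see e.g.\ \cite[Section 8]{James1978}). In characteristic $2$, the sign representation coincides with the trivial module, so this collapses to $(S_2^\lambda)^{\ast} \cong S_2^{\lambda'}$. Applied to $\lambda = (n-r,1^r)$, whose conjugate partition is $(r+1, 1^{n-r-1})$, this immediately yields $(S_2^{(n-r,1^r)})^{\ast} \cong S_2^{(r+1, 1^{n-r-1})}$, so the theorem becomes equivalent to the self-duality assertion $S_2^{(n-r,1^r)} \cong (S_2^{(n-r,1^r)})^{\ast}$.

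To establish this self-duality when $n$ is odd, I would use the filtration of \Cref{FPL}. By the conjugation symmetry just noted, I may assume without loss of generality that $n-r \geq r$, so that \Cref{FPL} applies and provides a filtration of $S_2^{(n-r,1^r)}$ whose layers are the $2$-part Specht modules $S_2^{(n-r, r)}, S_2^{(n-r+2, r-2)}, \ldots$ The key feature distinguishing $n$ odd from $n$ even is that the two removable nodes of $(n-r,1^r)$, at positions $(1, n-r)$ and $(r+1, 1)$, share the same residue modulo $2$: both equal $r \bmod 2$, because $n-r-1 \equiv r \pmod 2$ when $n$ is odd. I would exploit this common-residue coincidence, together with the explicit polytabloid description of the filtration layers from the proof of \Cref{FPL}, to produce an $S_n$-invariant non-degenerate symmetric bilinear form on $S_2^{(n-r,1^r)}$ (or equivalently, an explicit module isomorphism to the dual).

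The main obstacle will be the explicit construction in the second step. Matching composition factor multisets is straightforward once $n$ is odd (this forces $r + (n-r-1)$ to be even, so the two conjugate partitions share a $2$-core and lie in the same block by \Cref{Nakayama's_Conjecture}, and the decomposition number formula of the previous section applied to whichever of the two partitions satisfies the hypothesis of \Cref{FPL} yields identical multiplicities), but matching composition factors alone does not imply isomorphism: one must also match the submodule lattice and the radical/socle series. An alternative route that bypasses an explicit construction is to invoke Murphy's detailed analysis of the endomorphism rings and decomposability of hook Specht modules in characteristic $2$ \cite{murphy1980decomposability, murphy1982submodule}, from which the required self-duality in the $n$ odd case can plausibly be extracted directly from existing structural results.
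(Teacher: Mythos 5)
Your opening reduction — that in characteristic $2$ the conjugate-partition duality $(S_2^{\lambda})^{\ast} \cong S_2^{\lambda'}$ makes the theorem \emph{equivalent} to the self-duality of $S_2^{(n-r,1^r)}$ — is correct, but notice it is essentially a restatement of the theorem, not a reduction: the statement already says ``$S_2^{(n-r,1^r)}$ is self-dual, that is, $S_2^{(n-r,1^r)}\cong S_2^{(r+1,1^{n-r-1})}$.'' So no work has been saved at that point.

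The genuine gap is exactly where you say it is: the ``main obstacle'' you identify — producing the $S_n$-invariant non-degenerate form, or equivalently the explicit isomorphism — is the entire content of the theorem, and your proposal never closes it. The observation that when $n$ is odd the two removable nodes of $(n-r,1^r)$ share a residue is a correct and suggestive structural fact (it is indeed what distinguishes the odd case from the even case, where \Cref{second_filtration} is needed instead), but you do not extract from it any map, pairing, or filtration matching. Matching composition-factor multisets, as you note yourself, proves nothing about module structure here; and the fallback appeal to Murphy's endomorphism-ring analysis is not backed by a specific result that asserts self-duality for $n$ odd (Murphy's cited theorems concern decomposability, endomorphism rings, and unique minimal submodules, not self-duality as such). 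By contrast, the paper's proof is an explicit construction: it fixes a generator polytabloid $e_s$ of $S_2^{(n-r,1^r)}$, defines $f(e_s) = h\,e_t$ for an explicit averaging element $h$ in the group algebra acting on the conjugate polytabloid $e_t$, verifies the column and Garnir relations by direct cancellation (using that $(r+2)(r+1)$ is even and $n-r-1 \equiv r \bmod 2$, which is precisely where the parity of $n$ enters), and then exhibits a preimage of $e_t$ to get surjectivity. To turn your sketch into a proof you would need to supply that construction, or locate and quote a concrete self-duality statement from the literature; as it stands, the argument stops exactly where the proof should begin.
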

\begin{proof}
    We will prove this theorem by explicitly providing an isomorphism between these two Specht modules.
    
    Let $s$ be the standard tableau of shape $(n-r,1^r)$ which has $1, \ldots, r+1$ in the first column and $1, r+2, \ldots, n$ in the first row. Let $t$ be the conjugate of $s,$ that is, the $(r+1, 1^{n-r-1})$-tableau which has $1, \ldots, r+1$ in the first row and $1, r+2, \ldots, n$ in the first column.

    Let $h_1$ be the identity of $S_n$, $h_2 = (12)$ and for $l \in \{3, \ldots, r+1\},$ let $ h_l = (1 l 2) \in S_n.$ Let $h = \sum_{l = 1}^{r+1}h_l.$

    Let $g_{r+2}$ be the identity of $S_n$, and for $k \in \{1, \ldots, r+1\}$ let $g_k = (k, k+1, \ldots, r+2).$ Let $g = \sum_{k=1}^{r+2}g_k.$

    Let $e_s$ be the polytabloid for the tableau $s$. Define a map $f : S_2^{(n-r,1^r)} \to S_2^{(r+1, 1^{n-r-1})}$ by $f(e_s) = he_t$ and extend linearly under the action of $S_n.$

    To show that this is a homomorphism, we need to show that the map respects column permutations and the Garnir relations of $e_s.$ The column relations follow clearly. To show that the Garnir relations are satisfied, we need to show that $ghe_t = 0,$ and for $i \geq r+2, (1+(i,i+1))he_t = 0.$    
    
    We have that $ghe_t$ is a sum of $(r+2)(r+1)$ terms, which is even. If $k \leq l$ then $(g_kh_l + g_{l+1}h_k)e_t = 0,$  and if $k > l$ then $(g_kh_l + g_lh_{k-1})e_t = 0,$ so the terms of $ghe_t$ cancel pairwise. Now for $i \geq r+2,$ consider $(1+(i,i+1))he_t.$ Clearly $(1+(i,i+1))$ commutes with $h,$ and $(i,i+1)$ is a column stabiliser for $e_t,$ so $(1+(i,i+1))he_t = 0$ as required. Hence the map $f$ is a homomorphism.

    To show that $f$ is an isomorphism, it suffices to show that $f$ is a surjection, as the Specht modules have equal dimension.

    We split this into the case where $r$ is even and where $r$ is odd.
    If $r$ is even, consider the sum of polytabloids \[ u = (1 + \sum_{x = 2}^{r+1} \sum_{y=r+2}^n (xy))e_s.\] We will show that $f(u) = e_t,$ and so $f$ is surjective as $e_t$ is a generator for $S_2^{(r+1, 1^{n-r-1})}$. 

    Write $z = \sum_{x = 2}^{r+1} \sum_{y=r+2}^n (xy)$ and $h = 1 + h'.$ To show that $f(u) = e_t,$ we will show that $(1+z)(1+h')e_t = e_t.$ 
        
    For $X,Y \subseteq \{1, \ldots, n\}$ such that $X \subseteq \{2, \ldots, r+1\} \cup Y$ and $|X| = |Y|,$ let $e_X^Y$ be the tabloid for the tableau which has in the first row but not first column the numbers $\{2, \ldots, r+1\}\cup Y \setminus X.$ Then it is clear that for $x \in \{2, \ldots, r+1\}, y \in \{r+2, \ldots, n\}, (xy)e_t = e_{\{x\}}^{\{y\}}$ and for $l \in \{2, \ldots, r+1\}, h_le_t = e_{\{l\}}^{\{1\}}.$ For each $l,$ we have $e_{\{l\}}^{\{1\}} = e_{t'}$ where $t'$ is the tableau which has $1$ in the second column, and $2, \ldots, l-1, l+1, \ldots, r+1$ in the first row from the third column onwards. Using the Garnir relation on the first two columns, we have that $e_{\{l\}}^{\{1\}} = \sum_{k = r+2}^n e_{\{l\}}^{\{k\}} + e_t.$

    So \[(z + h')e_t = \sum_{x = 2}^{r+1}\sum_{y=r+2}^n e_{\{x\}}^{\{y\}} + \sum_{l=2}^{r+1}\sum_{k=r+2}^n (e_{\{l\}}^{\{k\}} + e_t) = 0\] where the final equality comes from the cancellation of the two summations, and the fact that $r(n-r-1)$ is even.
                
    Now consider $zh'e_t = z(\sum_{l=2}^{r+1}e_{\{l\}}^{\{1\}}).$ If $x = l$ then $x$ and $y$ are both in the first column of $e_{\{l\}}^{\{1\}},$ hence the permutation $(xy)$ is a column stabiliser of $e_{\{l\}}^{\{1\}}$ and $(xy)e_{\{l\}}^{\{1\}} = e_{\{l\}}^{\{1\}}.$ If $x \neq l$ then $(xy)e_{\{l\}}^{\{1\}} = e_{\{l,x\}}^{\{1,y\}}.$ 

    So \[ zh'e_t = \sum_{\mathclap{\substack{l,x = 2,\\l\neq x\\}}}^{r+1}\sum_{y=r+2}^n e_{\{l,x\}}^{\{1,y\}} + \sum_{x=2}^{r+1}\sum_{y=r+2}^{n}e_{\{x\}}^{\{1\}}.\]

    Note that \[ \sum_{\mathclap{\substack{l,x = 2,\\l\neq x\\}}}^{r+1}\sum_{y=r+2}^n e_{\{l,x\}}^{\{1,y\}} = 0\] as $e_{\{l,x\}}^{\{1,y\}} = e_{\{x,l\}}^{\{1,y\}}$ and $x\neq l$ so these terms cancel out. 

    Similarly, \[ \sum_{x=2}^{r+1}\sum_{y=r+2}^{n}e_{\{x\}}^{\{1\}} = (n-r-1)\sum_{x=2}^{r+1}e_{\{x\}}^{\{1\}} = 0\] as $(n-r-1)$ is even.

    Hence if $r$ is even, we have $f(u) = (1+z)(1+h')e_t = e_t.$
        
    If $r$ is odd, consider the sum of polytabloids \[ v = (\sum_{x = 2}^{r+1} \sum_{y=r+2}^n (xy))e_s.\] Then by a similar calculation to the previous case, one can verify that $f(v) = e_t,$ and so $f$ is surjective.

    Hence when $n$ is odd and $1 \leq r < n,$ we have $S_2^{(n-r,1^r)} \isom S_2^{(r+1, 1^{n-r-1})}.$
\end{proof}

\begin{example}\label{Example_S_2^(7_1^4)}
Consider $S_2^{(7,1^4)}.$ We have $s = \young(16789<10><11>,2,3,4,5)$ and $t = \young(12345,6,7,8,9,<10>,<11>).$ Then \[f(e_s) = e\,\young(12345,6,7,8,9,<10>,<11>) + e\,\young(21345,6,7,8,9,<10>,<11>) + e\,\young(31245,6,7,8,9,<10>,<11>) + e\,\young(41325,6,7,8,9,<10>,<11>) + e\,\young(51342,6,7,8,9,<10>,<11>) \,.\] We provide the submodule lattice for $S_2^{(7,1^4)}$ below, and by \Cref{odd_self_dual} we know that this is the same as the submodule lattice for $S_2^{(5,1^6)}.$ In the diagram below, the vertices represent submodules of $S^\lambda_2,$ and the nodes are labelled by the dimension of the submodule. The directed edge from a vertex $u$ to a vertex $v$ indicates that the submodule associated to $u$ is a maximal submodule of the submodule associated to $v,$ and the edges are labelled by their irreducible subquotients.

    \scalebox{0.8}{
\begin{tabular}{|c|}
         \hline
        \begin{tikzpicture}[main/.style = {draw, circle}, scale = 3]
        \node at (0,-1/2) {The submodule lattice for $S^{( 7, 1^4 )}_2$};
        
        \node[main] (0) at (0,0) {$0$};
        \node[main] (1) at (-1/2,1) {$1$};
        \node[main] (44) at (1/2,1) {$44$};
        \node[main] (45) at (1/2,2) {$45$};
        \node[main] (165) at (-1/2,2) {$165$};
        \node[main] (166) at (-1/2,3) {$166$};
        \node[main] (209) at (1/2,3) {$209$};
        \node[main] (210) at (0,4) {$210$};
        
        \draw[->] (0) edge ["$D_2^{(11)}$", pos = 0.5] (1);
        \draw[->] (0) edge ["$D_2^{(9,2)}$"', pos = 0.5] (44);
        
        \draw[->] (1) edge ["$D_2^{(9,2)}$", pos = 0.5] (45);
        \draw[->] (1) edge ["$D_2^{(7,4)}$", pos = 0.5] (165);
        
        \draw[->] (44) edge ["$D_2^{(11)}$"', pos = 0.5] (45);
        
        \draw[->] (45) edge ["$D_2^{(7,4)}$"', pos = 0.5] (209);
        
        \draw[->] (165) edge ["$D_2^{(11)}$", pos = 0.5] (166);
        \draw[->] (165) edge ["$D_2^{(9,2)}$", pos = 0.5] (209);
        
        \draw[->] (166) edge ["$D_2^{(9,2)}$", pos = 0.5] (210);
        
        \draw[->] (209) edge ["$D_2^{(11)}$"', pos = 0.5] (210);
        
        \end{tikzpicture}
        \\
         \hline
    \end{tabular}
    }
\end{example}

\Cref{FPL,odd_self_dual} allow us to calculate the full submodule structure of hook Specht modules, when $n$ is odd and somewhat small.

\begin{example}
For $\lambda = (9, 1^4), S_2^\lambda$ has composition factors $D_2^{(9,4)}, D_2^{(11,2)}$ and $D_2^{(13)}$ with multiplicities $1, 2, 3$ respectively. By \cite[Theorem 5.5]{murphy1980decomposability}, $S_2^\lambda$ has a direct summand isomorphic to the trivial module, so $S_2^\lambda \isom D_2^{(13)} \oplus M$ for a submodule $M$ which is self dual, has a submodule isomorphic to $S_2^{(9,4)},$ has a quotient isomorphic to the dual of $S_2^{(9,4)},$ and has composition factors $D_2^{(9,4)}, D_2^{(11,2)}$ and $D_2^{(13)}$ with multiplicities $1, 2, 2$ respectively. $S_2^{(9,4)}$ itself is uniserial with composition factors, from bottom to top, $D_2^{(11,2)}, D_2^{(13)}, D_2^{(9,4)},$ and hence $M$ must be uniserial with composition factors, from bottom to top, $D_2^{(11,2)}, D_2^{(13)}, D_2^{(9,4)}, D_2^{(13)}, D_2^{(11,2)}$.
\end{example}

\Cref{odd_self_dual} does not work for $n$ even, but we can instead construct another filtration in the case that $n$ is even and $n-r \leq r,$ if we relax the condition that all of the quotients must be isomorphic to Specht modules.

Recalling \Cref{S*(n-r_r)}, we now state the main result of this section:

\begin{theorem}\label{second_filtration}
    Let $\lambda = (n-r,1^r) \vdash n$ be a hook partition of $n,$ with $n$ even and $n-r \leq r.$ Then $S_2^\lambda$ has a filtration (from bottom to top) by the modules $S_2^{*(r,n-r)}, S_2^{(r+2,n-r-2)}, \ldots, S_2^{(r+2l, n-r-2l)}, \ldots.$
\end{theorem}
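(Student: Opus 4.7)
The plan is to construct the filtration explicitly by adapting the proof of \Cref{FPL} to the regime $n-r \leq r$, with the key adjustment that the bottommost factor is $S_2^{*(r,n-r)}$ rather than a full 2-part Specht module. Define vector subspaces $M_{-1} = 0 \subseteq M_0 \subseteq M_1 \subseteq \ldots \subseteq M_L \subseteq S_2^{(n-r, 1^r)}$, with $L = \lfloor(n-r)/2\rfloor$, as spans of sums of polytabloids $\sum_{t \in T_X} e_t$ indexed by tuples analogous to those in \Cref{FPL}. For parameter $l$, the tuples should consist of $n-r-2l$ ``pair'' components $(a_i,b_i)$ together with $(r-n/2)+l$ ``triple'' components $(c_j,d_j,e_j)$, chosen to fill the first column of length $r+1$. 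The hypothesis ``$n$ even, $r \geq n/2$'' ensures that $(r-n/2)+l$ is a non-negative integer throughout; in particular, at $l = 0$ one already has $r - n/2 \geq 0$ triples, which is precisely the feature distinguishing this filtration from that of \Cref{FPL}. One first shows each $M_l$ is a submodule and that $M_{l-1} \subseteq M_l$ via the same Garnir-type identities as in \Cref{FPL}.

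For $l \geq 1$, I would construct $\mathbb{F}S_n$-homomorphisms $\theta_l : S_2^{(r+2l, n-r-2l)} \to M_l / M_{l-1}$ by specifying their values on generating polytabloids via the template from \Cref{FPL} and verifying column and Garnir relations. These verifications are essentially the same as in \Cref{FPL}, since the only difference is a re-indexing in the tuple parameter, and the Garnir computations do not care that triples are already present at $l=0$.

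The crucial step is $l = 0$. By \Cref{S*(n-r_r)}, $S_2^{*(r,n-r)}$ is the image of $\hat{\Theta}_{n-r-1} : S_2^{(r+1, n-r-1)} \to S_2^{(r, n-r)}$, equivalently a quotient of $S_2^{(r+1, n-r-1)}$. I would construct a homomorphism $\phi : S_2^{(r+1, n-r-1)} \to M_0$ analogous to the $\theta_l$, with the ``missing'' entry of the shorter second row of the $(r+1, n-r-1)$-tableau being absorbed into one of the triple components. Using the explicit form of $\hat{\Theta}_{n-r-1}$ from \Cref{2_part_explicit_hom}, one can then check that $\ker(\hat{\Theta}_{n-r-1}) \subseteq \ker(\phi)$, so $\phi$ descends to a surjection $S_2^{*(r,n-r)} \twoheadrightarrow M_0$.

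Finally, a dimension count closes the argument. Using the standard formula $\dim S_2^{(n-r, 1^r)} = \binom{n-1}{r}$, the known dimensions of 2-part Specht modules, and the dimension of $S_2^{*(r,n-r)}$ extracted from its composition factors (identified via the remark preceding \Cref{S*(n-r_r)}, which pins down exactly the parity-restricted composition factors of $S_2^{(r,n-r)}$), one verifies $\dim S_2^{(n-r, 1^r)} = \dim S_2^{*(r, n-r)} + \sum_{l \geq 1} \dim S_2^{(r+2l, n-r-2l)}$. This forces $M_L = S_2^{(n-r, 1^r)}$ and every surjection above to be an isomorphism. The main obstacle is the $l = 0$ step: matching $\ker(\hat{\Theta}_{n-r-1})$ to a combination of column and Garnir relations inside $S_2^{(n-r, 1^r)}$, so that $\phi$ factors through $S_2^{*(r,n-r)}$ and not through a strictly larger quotient.
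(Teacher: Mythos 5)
Your overall strategy---construct a chain of submodules $0 = M_{-1} \le M_0 \le \cdots \le M_{\lfloor(n-r)/2\rfloor}$ indexed by tuples, build surjections onto the successive quotients, and close with a dimension count---is exactly the paper's strategy, and your identification of the $l=0$ step as the crux (matching the kernel of $\phi_0$ to $\ker(\hat{\Theta}_{n-r-1})$) is also on target. However, the specific tuple construction you propose does not work, and it fails for an elementary reason.

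You suggest tuples with $n-r-2l$ pair components $(a_i,b_i)$ and $(r-n/2)+l$ triple components $(c_j,d_j,e_j)$, each triple contributing two entries to the first column. This gets the column length right: $2 + (n-r-2l-1) + 2\bigl((r-n/2)+l\bigr) = r+1$. But the number of \emph{distinct elements} in such a tuple is $2(n-r-2l) + 3\bigl((r-n/2)+l\bigr) = n/2 + r - l$, which exceeds $n$ whenever $l < r - n/2$. Since $r \ge n/2$ with strict inequality in all but the borderline case, the tuple at $l=0$ would require $n/2 + r > n$ distinct entries from $\{1,\ldots,n\}$, which is impossible; and for $r \ge 2n/3$ \emph{every} value of $l$ in the range $[0,\lfloor(n-r)/2\rfloor]$ is affected. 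The triple-based construction from \Cref{FPL} relies on having slack ($n - r \ge r$ keeps $2(r-2k)+3k = r + (r-k) \le n$) which is precisely what is absent here.

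The paper sidesteps this by replacing the $k$ independent triples with a single block of singletons $c_1,\ldots,c_m$ (with $m = 2r+2l+1-n$ for $0 < 2l < n-r$, and $m = 2r+2-n$ at $l=0$) subject to an ``all but one of the $c_j$ lie in the first column'' rule. This contributes $m-1$ column entries from $m$ tuple entries---an overhead of $1$ rather than the $3/2$ ratio of triples---which is exactly what makes the tuple sizes come out to $n$ at $l=0$ and $n-2l+1$ for larger $l$. The Garnir verifications are then run against the element $h = 1 + \sum_j (c_1,c_j)$ rather than against products of $\check{h}_j$'s; these are genuinely different calculations, not a cosmetic re-indexing. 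Finally, even granting a correct $M_0$, identifying $M_0 \cong S_2^{*(r,n-r)}$ requires both inclusions between $\ker(\phi_0)$ and $\ker(\hat{\Theta}_{n-r-1}) = \operatorname{Im}(\hat{\Theta}_{n-r-2})$: the paper shows $\phi_0 \circ \hat{\Theta}_{n-r-2} = 0$ directly, and gets the reverse inclusion only after the global dimension count forces $\dim\ker(\phi_0) \le \dim\operatorname{Im}(\hat{\Theta}_{n-r-2})$. Your proposal assumes $\ker(\hat{\Theta}_{n-r-1}) \subseteq \ker(\phi)$ can be checked directly and that the dimension count then finishes things, but without the forward inclusion there is nothing to rule out $\ker(\phi_0)$ being strictly larger, which would make $M_0$ a proper quotient of $S_2^{*(r,n-r)}$.
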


\begin{proof}
    As before, the main part of the proof is to build the submodules $0 = N_{-1} \leq N_0 \leq \ldots \leq N_{\floor{(n-r)/2}} = S_2^{(n-r,1^r)},$ where the quotients $N_l/N_{l-1}$ are isomorphic to $S_2^{(r+2l, n-r-2l)}$ for $l \geq 1$ and $N_0 \isom S_2^{*(r,n-r)}.$

    We first define $N_l$ as a vector subspace of $S_2^{(n-r,1^r)}$ in the following way.
    
    If $l=0,$ let $\Bar{X}_0$ be the set of ordered $n$-tuples \[X = (a_1, b_1, \ldots, a_{n-r-1},b_{n-r-1}, c_{1}, \ldots, c_{2r-n+2}),\] consisting of different elements of $\{1, \ldots, n\}.$ Then for each $X \in \Bar{X}_0,$ let $T_X$ be the set of column-standard $(n-r,1^r)$-tableaux $t$ such that the first column of $t$ contains
    \begin{itemize}
        \item $a_1$ and $b_1,$
        \item exactly one of $a_i$ or $b_i$ for each $i = 2, \ldots, n-r-1,$
        \item all but one of $c_1, \ldots, c_{2r+2-n}.$
    \end{itemize}

    If $0 < 2l < n-r,$ let $\Bar{X}_l$ be the set of ordered $(n-2l+1)$-tuples \[X = (a_1, b_1, \ldots, a_{n-r-2l},b_{n-r-2l}, c_1, \ldots, c_{2r+2l+1-n}),\]
    consisting of different elements of $\{1, \ldots, n\}.$ Then for each $X \in \Bar{X}_l,$ let $T_X$ be the set of column-standard $(n-r,1^r)$-tableaux $t$ such that the first column of $t$ contains
    \begin{itemize}
        \item $a_1$ and $b_1,$
        \item exactly one of $a_i$ or $b_i$ for each $i = 2, \ldots, n-r-2l,$
        \item all but one of $c_1, \ldots, c_{2r+2l+1-n}.$
    \end{itemize}

    If $2l = n-r,$ let $\Bar{X}_l$ be the set of ordered $(r+3)$-tuples \[X = (a, b_1, b_2, b_3, c_1, \ldots, c_{r-1}),\] consisting of different elements of $\{1, \ldots, n\}.$ Then for each $X \in \Bar{X}_l,$ let $T_X$ be the set of column-standard $(n-r,1^r)$-tableaux $t$ such that the first column of $t$ contains
    \begin{itemize}
        \item $a,$
        \item exactly $2$ of $b_1, b_2, b_3,$
        \item all but one of $c_1, \ldots, c_{r-1}.$
    \end{itemize}

    Then define $N_l$ to be the vector subspace of $S_2^{(n-r,1^r)}$ generated by $\{\sum_{t \in T_X} e_t \, | \, X \in \Bar{X}_l\}.$

    We need to show that $N_{l-1} \subseteq N_l.$ We do this in three cases:
    \begin{enumerate}
        \item To show that $N_0 \subseteq N_1,$ fix an \[X = (a_1, b_1, \ldots, a_{n-r-1},b_{n-r-1}, c_{1}, \ldots, c_{2r-n+2}) \in \Bar{X}_0.\] We want to show that $\sum_{t \in T_X} e_t \in N_1.$ Let \[Y_1 = (a_1, b_1, \ldots, a_{n-r-2},b_{n-r-2}, c_1, \ldots, c_{2r-n+2}, a_{n-r-1}) \in \Bar{X}_1\] and \[Y_2 = (a_1, b_1, \ldots, a_{n-r-2},b_{n-r-2}, c_1, \ldots, c_{2r-n+2}, b_{n-r-1}) \in \Bar{X}_1,\] then it is a straightforward calculation to show that \[\sum_{t \in T_X}e_t = \sum_{t \in T_{Y_1}}e_t + \sum_{t\in T_{Y_2}}e_t,\] and hence $N_0$ is a vector subspace of $N_1.$
        
        \item To show that $N_{l-1} \subseteq N_l$ when $0 < 2l < n-r,$ fix an \[X = (a_1, b_1, \ldots, a_{n-r-2l+2},b_{n-r-2l+2}, c_1, \ldots, c_{2r+2l-1-n}) \in \Bar{X}_{l-1}.\] We want to show that $\sum_{t \in T_X} e_t \in N_l.$ Let 
        \[Y_1 = (a_1, b_1, \ldots, a_{n-r-2l},b_{n-r-2l}, c_1, \ldots, c_{2r+2l-1-n}, a_{n-r-2l+1}, a_{n-r-2l+2})\in \Bar{X}_l,\]
        \[Y_2 = (a_1, b_1, \ldots, a_{n-r-2l},b_{n-r-2l}, c_1, \ldots, c_{2r+2l-1-n}, a_{n-r-2l+1}, b_{n-r-2l+2}) \in \Bar{X}_l,\] 
        \[Y_3 = (a_1, b_1, \ldots, a_{n-r-2l},b_{n-r-2l}, c_1, \ldots, c_{2r+2l-1-n}, b_{n-r-2l+1}, a_{n-r-2l+2}) \in \Bar{X}_l,\] 
        \[Y_4 = (a_1, b_1, \ldots, a_{n-r-2l},b_{n-r-2l}, c_1, \ldots, c_{2r+2l-1-n}, b_{n-r-2l+1}, b_{n-r-2l+2}) \in \Bar{X}_l,\] 
              
        then it is a straightforward calculation to show that \[\sum_{t \in T_X}e_t = \sum_{t \in T_{Y_1}}e_t + \sum_{t\in T_{Y_2}}e_t+ \sum_{t\in T_{Y_3}}e_t+ \sum_{t\in T_{Y_4}}e_t,\] and hence $N_{l-1}$ is a vector subspace of $N_l.$

        \item  To show that $N_{l-1} \subseteq N_l$ when $2l = n-r,$ fix an \[X = (a_1, b_1, a_{2},b_{2}, c_1, \ldots, c_{r-1}) \in \Bar{X}_{l-1}.\] We want to show that $\sum_{t \in T_X} e_t \in N_l.$ Let 
        \[Y_1 = (a_2, a_1, b_1, b_{2}, c_1, \ldots, c_{r-1}) \in \Bar{X}_{l},\]
        \[Y_2 = (b_2, a_1, b_1, a_{2}, c_1, \ldots, c_{r-1}) \in \Bar{X}_{l},\] then it is a straightforward calculation to show that \[\sum_{t \in T_X}e_t = \sum_{t \in T_{Y_1}}e_t + \sum_{t\in T_{Y_2}}e_t,\] and hence $N_{l-1}$ is a vector subspace of $N_l.$
    \end{enumerate}

    It is clear that each $N_l$ is actually a submodule of $S_2^{(n-r,1^r)},$ as $\Bar{X}_k$ is fixed under the natural action of $S_n$ on tuples of elements $\{1, \ldots, n\},$ and for all $\pi \in S_n,$ \[\pi \sum_{t \in T_X} e_t = \sum_{t \in T_{\pi X}}e_t,\]  so the given spanning set for $N_l$ is closed under the action of $S_n,$ and hence so is $N_l.$

    Now we construct homomorphisms $\phi_l, 0 \leq 2l \leq n-r,$ where $\phi_0 : S_2^{(r+1,n-r-1)} \to N_0/N_{-1}$ and for $0 < 2l \leq n-r ,$ \[\phi_l: S_2^{(r+2l, n-r-2l)}\to \frac{N_l}{N_{l-1}},\] and we set $N_{-1} \equiv 0.$

    We define $\phi_l$ in the following way. 
    \begin{enumerate}
        \item For $l=0$ and an $(r+1,n-r-1)$-tableau $s,$ let $a_i$ be the entry in the $i$th column first row, $b_i$ be the entry in the $i$th column second row, then for some choice of $c_i$ not equal to $a_1, b_1, \ldots, a_{n-r-1}, b_{n-r-1},$ set $X = (a_1, b_1, \ldots, a_{n-r-1}, b_{n-r-1}, c_1, \ldots, c_{2r-n+2}).$
        \item For $0 < 2l < n-r$ and an $(r+2l, n-r-2l)$-tableau $s,$ let $a_i$ be the entry in the $i$th column first row, $b_i$ be the entry in the $i$th column second row, then for some choice of $c_i$ not equal to $a_1, b_1, \ldots, a_{n-r-2l}, b_{n-r-2l},$ set $X = (a_1, b_1, \ldots, a_{n-r-2l}, b_{n-r-2l}, c_1, \ldots, c_{2r+2l+1-n}).$
        \item For $2l = n-r$ and an $(n)$-tableau $s,$ let $a, b_1, b_2, b_3, c_1, \ldots, c_{r-1}$ be distinct entries, and let $X = (a, b_1, b_2, b_3, c_1, \ldots, c_{r-1}).$
    \end{enumerate}

    Now define the maps \[\phi_l(e_s) := \sum_{t \in T_X}e_t + N_{l-1},\] and extend linearly.

    First we will show that this is independent of the choice of $c_i$'s, then we will show that $\phi_l$ is a $KS_n$-module homomorphism, and that $N_{\floor{(n-r)/2}} = S_2^{(n-r,1^r)}.$

    It is clear that reordering the $c_i$'s does not change $\phi_l$ for any $l,$ and when $l=0, X$ has exactly $n$ entries and so this is well-defined. For $0 < 2l < n-r,$ it suffices to also check that if some $c_1$ is relabelled by some $k\leq n, k \not\in X,$ that the sum is still well-defined.
    
    For $l=1,$ let $\phi^1_1(e_s)$ be the sum as defined above, and let $\phi^2_1(e_s)$ be the same sum but with $k$ in place of $c_1.$ Now define $s'$ to be a $(r+1, n-r-1)$-tableau which has $a_i, b_i$ in the first $n-r-2$ columns, and $c_1$ and $k$ in the $n-r-1$st column. Then $\phi^1_1(e_s) + \phi^2_1(e_s) = \phi_0(e_{s'}).$

    For $0 < 2l < n-r,$ let $\phi^1_l(e_s)$ be the sum as defined above, and let $\phi^2_l(e_s)$ be the same sum but with $k$ in place of $c_1.$ Now for $j$ from $1$ to $(r+l-n/2)$ (recalling that $n$ is even by assumption), write $s_j$ for the $(r+2l-2, n-r-2l+2)$-tableau which has $a_i, b_i$ in the first $n-r-2l$ columns, $k$ and $c_1$ in the $n-r-2l+1$st column, and $c_{2j}, c_{2j+1}$ in the $n-r-2l+2$nd column. Then $\phi^1(e_s) + \phi^2(e_s) = \sum_{j = 1}^{r + l - n/2} \phi_{l-1}(e_{s_j}).$

    For $2l = n-r,$ it is clear that relabelling $b_i$ with another $b_{i'}$ or relabelling $c_i$ with another $c_{i'}$ does not change the sum. It suffices to show that relabelling $a$ or $b_i$ or $c_j$ with some other $k \leq n, k \not\in X$ remains well-defined. Once this has been shown, it follows that permuting say $a$ and $b_i$ is well-defined, as this is the same as permuting $a$ and $k,$ and then $k$ with $b_i.$

    First, the case of permuting $a$ with some $k.$ Let $\phi^1_l(e_s)$ be the sum as defined above, and let $\phi^2_l(e_s)$ be the same sum but with $k$ in place of $a.$ Now define $s_1$ to be an $(n-2,2)$-tableau which has $a$ and $b_1$ in the first column, and $b_2$ and $b_3$ in the second column; define $s_2$ to be an $(n-2,2)$-tableau which has $b_1$ and $k$ in the first column, and $b_2$ and $b_3$ in the second column; and define $s_3$ to be an $(n-2,2)$-tableau which has $b_2$ and $b_3$ in the first column, and $a$ and $k$ in the second column. Then $\phi^1_l(e_s) + \phi^2_l(e_s) = \phi_{l-1}(e_{s_1}) + \phi_{l-1}(e_{s_2}) + \phi_{l-1}(e_{s_3}).$ 

    Second, the case of permuting say $b_1$ with some $k.$ Let $\phi^1_l(e_s)$ be the sum as defined above, and let $\phi^2_l(e_s)$ be the same sum but with $k$ in place of $b_1.$ Now define $s_1$ to be an $(n-2,2)$-tableau which has $a$ and $b_1$ in the first column, and $b_2$ and $b_3$ in the second column; and $s_2$ to be an $(n-2)$-tableau which has $a$ and $k$ in the first column, and $b_2$ and $b_3$ in the second column. Then $\phi^1_l(e_s) + \phi^2_l(e_s) = \phi_{l-1}(e_{s_1}) + \phi_{l-1}(e_{s_2}).$  

    Finally, the case of permuting say $c_1$ with some $k.$ Let $\phi^1_l(e_s)$ be the sum as defined above, and let $\phi^2_l(e_s)$ be the same sum but with $k$ in place of $c_1.$ Now for $j$ from $1$ to $(r+l-n/2)$ (recalling that $n$ is even by assumption), write $s_j$ for the $(n-2,2)$-tableau which has $a$ and $c_1$ in the first column, and $c_{2j}, c_{2j+1}$ in the second column; and write $s'_j$ for the $(n-2,2)$-tableau which has $a$ and $k$ in the first column, and $c_{2j}, c_{2j+1}$ in the second column. Then $\phi^1_l(e_s) + \phi^2_l(e_s) = \sum_{j = 1}^{r+l-n/2} e_{s_j} + e_{s'_j}.$

    Hence the maps are well-defined. Now we will show that each $\phi_l$ is a $KS_n$-module homomorphism. Clearly if $\sigma$ is a column stabiliser of $s,$ then $(1 + \sigma)\phi_l(e_s) = 0.$ We check the Garnir relations based on $l.$

    First assume $l=0.$ $S_2^{(r+1, n-r-1)}$ is cyclic, so we can choose $c_i = a_{n-r-1+i},$ and fix $\Tilde{t}$ to be the column-standard tableau that has in its first column $b_1, a_i$ for $i = 1, \ldots, n-r-1,$ and $c_j$ for $j = 2, \ldots, 2r+2-n.$ For $i = 2, \ldots, n-r-1$ define $\Hat{h}_i = (1 + (a_i, b_i))$ and define $h = 1 + \sum_{j = 2}^{2r+2-n} (c_1, c_j).$ Then, 
    \begin{align*}
    \phi_l(e_s) &= \sum_{t\in T_X}e_t + N_{-1}\\
    &= (\prod_{i=2}^{n-r-1}\hat{h}_i) h e_{\Tilde{t}} + N_{-1}.
    \end{align*}
    Note that the collection of $\hat{h}_i$ and $h$ all pairwise commute. 

    There are $6$ types of Garnir relations:
    \begin{enumerate}
    \item The Garnir relation on the set $\{a_1, b_1, a_2\}$ which has the Garnir element $g_{1,a} := 1 + (b_1, a_2) + (a_1, b_1, a_2).$
    \item The Garnir relation on the set $\{b_1, a_2, b_2\}$ which has the Garnir element $g_{1,b} := 1 + (b_1, b_2) + (b_1, a_2, b_2).$
    \item For $2 \leq i \leq n-r-2,$ the Garnir relation on the set $\{a_i, b_i, a_{i+1}\}$ which has the Garnir element $g_{i, a} := 1 + (b_i, a_{i+1}) + (a_i, b_i, a_{i+1}).$
    \item For $2 \leq i \leq n-r-2,$ the Garnir relation on the set $\{b_i, a_{i+1}, b_{i+1}\}$ which has the Garnir element $g_{i,b} := 1 + (b_i, b_{i+1}) + (b_i, a_{i+1}, b_{i+1}).$ 
    \item The Garnir relation on the set $\{a_{n-r-1}, b_{n-r-1}, c_1\}$ which has the Garnir element $g_{n-r-1, a} := 1 + (b_{n-r-1}, c_1) + (a_{n-r-1}, b_{n-r-1}, c_1).$
    \item For $i > n-r-1,$ the Garnir relation on the set $\{a_i,a_{i+1}\}$ which has the Garnir element $1 + (a_i, a_{i+1}).$
    \end{enumerate}

    We will show that each of these Garnir elements also eliminates $\phi_0(e_s)$ and hence the homomorphism is well defined.

    \begin{enumerate}
        \item Note that $g_{1,a}$ commutes with every $\hat{h}_i$ and $h$ except for $\hat{h}_2 = (1 + (a_2, b_2)),$ and that $g_{1,a}\hat{h}_2$ commutes with every other $\hat{h}_i$ and $h.$ Then 
        \begin{align*}
            g_{1,a}\hat{h}_2 &= (1 + (b_1, a_2) + (a_1, b_1, a_2))(1 + (a_2, b_2))\\
            &= (1 + (b_1, a_2) + (a_1, b_1, a_2) + (a_2, b_2) + (b_1, a_2, b_2) + (a_1, b_1, a_2, b_2)),
        \end{align*}

        Let $X = \{a_3, \ldots, a_{n-r-1}, c_2, \ldots, c_{2r+2-n}\}$ and for $x \in X$ let $e_{t/x}$ be the polytabloid for the tableau which has in the first column \[\{a_1, b_1, b_2, a_2, \ldots, a_{n-r-1}, c_2, \ldots, c_{2r+2-n}\}\setminus\{x\}.\] Then \[ g_{1,a}\hat{h}_2 e_{\Tilde{t}} = \sum_{x \in X} e_{t/x}\] by the Garnir relations on $S^{(n-r,1^r)},$ and for each $x,$ if $x = a_i$ then $\hat{h}_ig_{1,a}\hat{h}_2e_{t/x} = 0$ and if $x = c_j,$ \[h (\sum_{j = 2}^{2r+2-n} e_{t/c_j}) = 0.\]

        \item Similar to above.

        \item Note that for $2 \leq i \leq n-r-2,$ $g_{i,a}$ commutes with $h$ and every $\hat{h}_j$ except for $\hat{h}_i$ and $\hat{h}_{i+1}.$ Then
        \begin{align*}
        &g_{i, a}\hat{h}_i\hat{h}_{i+1} \\
        &= (1 + (b_i, a_{i+1}) + (a_i, b_i, a_{i+1}))(1 + (a_i, b_i))(1 + (a_{i+1}, b_{i+1}))\\
        &= 1 + (b_i, a_{i+1}) + (a_i, b_i, a_{i+1}) + (a_i, b_i) + (a_i, a_{i+1}, b_i) + (a_i, a_{i+1}) \\
        &+ (a_{i+1}, b_{i+1}) + (b_i, a_{i+1}, b_{i+1}) + (a_i, b_i, a_{i+1}, b_{i+1}) + (a_i, b_i)(a_{i+1}, b_{i+1}) \\
        &+ (a_i, a_{i+1}, b_{i+1}, b_i) + (a_i, a_{i+1}, b_{i+1}),
        \end{align*}  
        and
        \begin{align*}
        g_{i, a}\hat{h}_i\hat{h}_{i+1}e_{\Tilde{t}} = 0.
        \end{align*}

        \item Similar to above.

        \item Note that $g_{n-r-1,a}$ commutes with every $\hat{h}_j$ except for $\hat{h}_{n-r-1},$ and does not commute with $h.$ Then
        \begin{align*}
        &g_{n-r-1, a}\hat{h}_{n-r-1}h \\
        &= (1 + (b_{n-r-1}, c_1) + (a_{n-r-1}, b_{n-r-1}, c_1))(1 + (a_{n-r-1}, b_{n-r-1}))(1 + \sum_{j=2}^{2r+2-n}(c_1, c_j))\\
        &= (1 + (b_{n-r-1}, c_1) + (a_{n-r-1}, b_{n-r-1}, c_1) + (a_{n-r-1}, b_{n-r-1}) + (a_{n-r-1}, c_1, b_{n-r-1}) \\ &+ (a_{n-r-1}, c_1))(1 + \sum_{j=2}^{2r+2-n}(c_1, c_j)),
        \end{align*}  
        and
        \begin{align*}
        g_{i, a}\hat{h}_i\hat{h}_{i+1}e_{\Tilde{t}} = 0.
        \end{align*}

        \item As $i > n-r-1, a_i$ and $a_{i+1}$ are just $c_{i'}$ and $c_{i'+1}$ where $i' = i-n+r+1,$ and the case of relabelling to $c_j$s has already been dealt with.
        
        Also, $1 + (c_{i'}, c_{i'+1})$ commutes with $\hat{h}_j$ for every $j,$ and $(1+ (c_{i'},c_{i'+1}))(c_1, c_j) e_{\tilde{t}} = 0$ unless $j=i'$ or $j=i'+1.$ But then $(1 + (c_{i'}, c_{i'+1}))((c_1,c_{i'}) + (c_1, c_{i'+1}))e_{\tilde{t}}=0$ and $(1 + (c_{1}, c_{2}))((1 + (c_1, c_{2}))e_{\tilde{t}}=0,$ so $(1 + (a_i, a_{i+1}))\phi_l(e_s) = 0$ as required.
    \end{enumerate}

Now assume $0 < 2l < n-r.$ $S^{(r+2l, n-r-2l)}$ is cyclic, so we can choose $c_i = a_{n-r-1+i},$ and fix $\Tilde{t}$ to be the column-standard tableau that has in its first column $b_1, a_i,$ for $i = 1, \ldots, n-r-2l,$ and $c_j$ for $j = 2, \ldots, 2r+2l+1-n.$ For $i = 2, \ldots, n-r-2l$ define $\hat{h}_i = (1 + (a_i, b_i))$ and define $h = 1+ \sum_{j=2}^{2r+2l+1-n} (c_1, c_j).$ Then,
\begin{align*}
    \phi_l(e_s) &= \sum_{t \in T_X} e_t + N_{l-1}\\
    &= (\prod_{i=2}^{n-r-2l}\hat{h}_i) h e_{\Tilde{t}} + N_{l-1}.
\end{align*}
Note that the collection of $\hat{h}_i$ and $h$ all pairwise commute.

There are $6$ types of Garnir relations:
    \begin{enumerate}
    \item The Garnir relation on the set $\{a_1, b_1, a_2\}$ which has the Garnir element $g_{1,a} := 1 + (b_1, a_2) + (a_1, b_1, a_2).$
    \item The Garnir relation on the set $\{b_1, a_2, b_2\}$ which has the Garnir element $g_{1,b} := 1 + (b_1, b_2) + (b_1, a_2, b_2).$
    \item For $2 \leq i \leq n-r-2l-1,$ the Garnir relation on the set $\{a_i, b_i, a_{i+1}\}$ which has the Garnir element $g_{i, a} := 1 + (b_i, a_{i+1}) + (a_i, b_i, a_{i+1}).$
    \item For $2 \leq i \leq n-r-2l-1,$ the Garnir relation on the set $\{b_i, a_{i+1}, b_{i+1}\}$ which has the Garnir element $g_{i,b} := 1 + (b_i, b_{i+1}) + (b_i, a_{i+1}, b_{i+1}).$ 
    \item The Garnir relation on the set $\{a_{n-r-2l}, b_{n-r-2l}, a_{n-r-2l+1}\}$ which has the Garnir element $g_{n-r-2l, a} := 1 + (b_{n-r-2l}, a_{n-r-2l+1}) + (a_{n-r-2l}, b_{n-r-2l}, a_{n-r-2l+1}).$
    \item For $i > n-r-2l,$ the Garnir relation on the set $\{a_i,a_{i+1}\}$ which has the Garnir element $1 + (a_i, a_{i+1}).$
    \end{enumerate}
    
    We will show that each of these Garnir elements also eliminates $\phi_l(e_s)$ and hence the homomorphism is well defined.

\begin{enumerate}
    \item Note that $g_{1,a}$ commutes with every $\hat{h}_i$ and $h$ except for $\hat{h}_2 = (1 + (a_2, b_2)),$ and that $g_{1,a}\hat{h}_2$ commutes with every other $\hat{h}_i$ and $h.$ Then 
        \begin{align*}
            g_{1,a}\hat{h}_2 &= (1 + (b_1, a_2) + (a_1, b_1, a_2))(1 + (a_2, b_2))\\
            &= (1 + (b_1, a_2) + (a_1, b_1, a_2) + (a_2, b_2) + (b_1, a_2, b_2) + (a_1, b_1, a_2, b_2)),
        \end{align*}
        Let $X = \{a_3, \ldots, a_{n-r-2l}, c_2, \ldots, c_{2r+2l+1-n}\}$ and for $x \in X$ let $e_{t/x}$ be the polytabloid for the tableau which has in the first column \[\{a_1, b_1, b_2, a_2, \ldots, a_{n-r-2l}, c_2, \ldots, c_{2r+2l+1-n}\}\setminus\{x\}.\] Then \[ g_{1,a}\hat{h}_2 e_{\Tilde{t}} = \sum_{x \in X} e_{t/x},\] and for each $x,$ if $x = a_i$ then $\hat{h}_ig_{1,a}\hat{h}_2e_{t/x} = 0$ and if $x = c_j,$ \[h (\sum_{j = 2}^{2r+2-n} e_{t/c_j}) = 0.\]

    \item Similar to above.

    \item Note that for $2 \leq i \leq n-r-2l,$ $g_{i,a}$ commutes with $h$ and every $\hat{h}_j$ except for $\hat{h}_i$ and $\hat{h}_{i+1}.$ Then
        \begin{align*}
        &g_{i, a}\hat{h}_i\hat{h}_{i+1} \\
        &= (1 + (b_i, a_{i+1}) + (a_i, b_i, a_{i+1}))(1 + (a_i, b_i))(1 + (a_{i+1}, b_{i+1}))\\
        &= 1 + (b_i, a_{i+1}) + (a_i, b_i, a_{i+1}) + (a_i, b_i) + (a_i, a_{i+1}, b_i) + (a_i, a_{i+1}) \\
        &+ (a_{i+1}, b_{i+1}) + (b_i, a_{i+1}, b_{i+1}) + (a_i, b_i, a_{i+1}, b_{i+1}) + (a_i, b_i)(a_{i+1}, b_{i+1}) \\
        &+ (a_i, a_{i+1}, b_{i+1}, b_i) + (a_i, a_{i+1}, b_{i+1}),
        \end{align*}  
        and
        \begin{align*}
        g_{i, a}\hat{h}_i\hat{h}_{i+1}e_{\Tilde{t}} = 0.
        \end{align*}

    \item Similar to above.

    \item Note that as $0 < 2l,$ $n-r-2l+1 < n-r$ so $a_{n-r-2l+1} \neq c_j$ for any $j,$ hence $g_{n-r-2l,a}$ commutes with every $\hat{h}_j$ except for $\hat{h}_{n-r-2l},$ and \emph{does} commute with $h.$ Then
        \begin{align*}
        &g_{n-r-2l, a}\hat{h}_{n-r-2l} \\
        &= (1 + (b_{n-r-2l}, a_{n-r-2l+1}) + (a_{n-r-2l}, b_{n-r-2l}, a_{n-r-2l+1}))(1 + (a_{n-r-2l}, b_{n-r-2l}))\\
        &= 1 + (b_{n-r-2l}, a_{n-r-2l+1}) + (a_{n-r-2l}, b_{n-r-2l}, a_{n-r-2l+1}) + (a_{n-r-2l}, b_{n-r-2l})\\ &+ (a_{n-r-2l}, a_{n-r-2l+1}, b_{n-r-2l}) + (a_{n-r-2l}, a_{n-r-2l+1})
        \end{align*}  
        and
        \begin{align*}
        g_{n-r-2l, a}\hat{h}_{n-r-2l}e_{\Tilde{t}} = 0.
        \end{align*}

    \item As $i > n-r-1$, $\{a_i, a_{i+1}\} \cap \{c_j\}_{j=1}^{2r+2l+1-n} = 2$ or $1.$ In the first case, $(1+(a_i, a_{i+1}))$ corresponds to relabelling two $c_j$s, and in the latter case this corresponds to relabelling $c_j$ with some other element not in $X.$ Both cases have been dealt with.
\end{enumerate}

Now assume $l = n-r.$ Any Garnir relation is of the form $1 + \sigma$ where $\sigma$ is a transposition on the set $\{a, b_1, b_2, b_3, c_1, \ldots, c_{r-1}, k\}$ where $k \leq n$ is distinct from the previous terms. However, we have already shown that $(1 + \sigma)\phi_{n-r}(e_s) \in N_{n-r-1}.$

Hence $\phi_l$ is a $KS_n$ homomorphism, because the construction is invariant under the action of $S_n.$ That is, for all $\pi \in S_n, \phi_l(\pi \theta_s) = \phi_l(e_{\pi s}) = \pi \phi_l(e_s).$

Now we show that $N_{\floor{(n-r)/2}} = S_2^{(n-r,1^r)}.$ We split this into the case that $2l= n-r-1$ and $2l = n-r.$

Fix $l$ such that $2l = n-r-1,$ and observe that $r$ must be odd as $n$ is even. We want to show that $N_l = S_2^{(n-r, 1^r)}.$ As $S_2^{(n-r,1^r)}$ is generated by $e_t$ for any tableau $t,$ it suffices to show that $e_t \in N_l,$ for some $(n-r,1^r)$-tableau $t.$

Fix a subset $X = \{a, b, c_1, \ldots, c_{r}\}.$ Then $\sum_{t \in T_X}e_t \in N_l$ by definition. Fix $\Tilde{t}$ to be a tableau which has in its first column $a, b, c_2, \ldots, c_r,$ and write $h = 1 + \sum_{j = 2}^{r} (c_1, c_j)$ then \[ \sum_{t \in T_X} e_t = h e_{\tilde{t}}.\] Now write $h' = 1 + \sum_{k = 2}^r (b, c_{k})$ and consider the expression \[ h' h e_{\tilde{t}}.\] Note that $h'h = h' + h'\sum_{j=2}^r(c_1, c_j),$ and $h'e_{\Tilde{t}} = e_{\Tilde{t}}$ as each transposition in $h'$ is a column stabiliser of $\tilde{t}$ and there are an odd number of terms in $h'.$ Now consider $h'\sum_{j=2}^r(c_1, c_j)e_{\tilde{t}} = (1 + \sum_{k = 2}^r (b, c_{k}))\sum_{j=2}^r(c_1, c_j)e_{\tilde{t}}.$ For each fixed $j,$ if $k\neq j,$ then $(b,c_k)(c_1,c_j)e_{\tilde{t}}$ is the polytabloid which has $\{a, c_1, \ldots, c_r\}\setminus\{c_j\}$ in the first column; this happens $r-1$ times, which is even and hence the terms all cancel out. If $k = j,$ then $(b, c_k)(c_1,c_k)e_{\Tilde{t}}$ is the polytabloid which has $\{a, c_1, \ldots, c_r\}$ in the first column; this happens $r-1$ times. Hence $h'he_{\tilde{t}} = e_{\Tilde{t}}$ and $N_{l} = S_2^{(n-r,1^r)}.$

Now fix $l$ such that $2l = n-r,$ and observe that $r$ must be even as $n$ is even. We want to show that $N_l = S_2^{(n-r,1^r)}.$ As $S_2^{(n-r,1^r)}$ is generated by $e_t$ for any tableau $t,$ it suffices to show that $e_t \in N_l,$ for some $(n-r,1^r)$-tableau $t.$

Fix a subset $X = \{a, b_1, b_2, b_3, c_1, \ldots, c_{r-1}\}.$ Then $\sum_{t \in T_X}e_t \in N_l$ by definition. Fix $\Tilde{t}$ to be a tableau which has in its first column $a, b_1, b_2, c_2, \ldots, c_r,$ and write $h_1 = (1 + (b_1, b_3) + (b_2, b_3))$ and $h_2 = 1 + \sum_{j = 2}^{r-1}(c_1, c_j)$ then \[ \sum_{t \in T_X} e_t = h_1 h_2 e_{\tilde{t}}.\] Now write $h'_1 = (a,b_1) + (a,b_2) + (a,b_3)$ and $h'_2 = 1 + \sum_{k=2}^{r-1}(b,c_k)$ and consider the expression \[h'_2 h'_1 h_1 h_2 e_{\Tilde{t}}.\] Note that $h_2$ commutes with both $h'_1$ and $h_1,$ so we can rewrite the expression as \[h'_2 h_2 h'_1 h_1 e_{\Tilde{t}}.\] One can quickly verify that $h'_1 h_1 e_{\Tilde{t}} = e_{\Tilde{t'}}$ where $e_{\Tilde{t'}}$ is the polytabloid that has in the first column $b_1, b_2, b_3, c_2, \ldots, c_{r-1}.$ Similarly, one can verify that $h'_2 h_2 e_{\Tilde{t'}} = e_{\Tilde{t'}}$ by a calculation similar to the case $2l = n-r-1.$

All that remains is to show that the submodule $S^{*(r,n-r)}_2$ of $S^{(n-r,1^r)}_2$ which has been described as a quotient of $S^{(r+1,n-r-1)}_2$ (that is, as $\Im(\phi_0)$) is isomorphic to a submodule of $S^{(r,n-r)}_2,$ and that for $0 \leq 2l \leq n-r,$ the maps $\phi_l$ are isomorphisms.

Recall from \Cref{2_part_exact_sequence} that, after writing $n=2k,$ there is an exact sequence \[0 \xrightarrow{\hat{\Theta}_{-1}} S^{(n)}_2 \xrightarrow{\hat{\Theta}_0} S^{(n-1,1)}_2 \xrightarrow{\hat{\Theta}_1} \ldots \xrightarrow{\hat{\Theta}_{k-2}} S^{(k+1,k-1)}_2 \xrightarrow{\hat{\Theta}_{k-1}} S^{(k,k)}_2 \xrightarrow{\hat{\Theta}_k} 0.\]

Consider the maps $\phi_0 : S^{(r+1,n-r-1)}_2 \to S^{(n-r,1^r)}_2$ and $\hat{\Theta}_{n-r-1} : S^{(r+1,n-r-1)}_2 \to S^{(r,n-r)}_2.$ In particular, by the first isomorphism theorem, we have $\Im(\phi_0) \isom S^{(r+1,n-r-1)}_2 / \Ker(\phi_0)$ and $\Im(\hat{\Theta}_{n-r-1}) \isom S^{(r+1,n-r-1)}_2 / \Ker(\hat{\Theta}_{n-r-1}),$ so if $\Ker(\phi_0) = \Ker(\hat{\Theta}_{n-r-1})$ as submodules of $S^{(r+1,n-r-1)}_2$ then $S^{*(r,n-r)}_2$ is isomorphic to both a quotient of $S^{(r+1,n-r-1)}_2$ and a submodule of $S^{(r,n-r)}_2.$

First we show that $\Ker(\phi_0) \supseteq \Ker(\hat{\Theta}_{n-r-1}) = \Im(\hat{\Theta}_{n-r-2})$ where the equality comes from the exact sequence. Recalling the construction from \Cref{2_part_explicit_hom}, we let $s$ be an $(r+2,n-r-2)$-tableau which has $a_1, \ldots, a_{n-r-2}, c_1, \ldots, c_{2r-n+4}$ in the first row (from left to right), $b_1, \ldots, b_{n-r-2}$ in the second row (from left to right). For $1 \leq j \leq r-k+2$ write $t_j$ for the $(r+1,n-r-1)$-tableau which has $a_1, \ldots, a_{n-r-2}, c_{2j-1}$ in the first row (from left to right), $b_1, \ldots, b_{n-r-2}, c_{2j}$ in the second row (from left to right), and $c_1, \ldots, \hat{c}_{2j-1}, \hat{c}_{2j}, \ldots, c_{2r-n+4}$ in the first row from (from left to right) after $c_{2j-1},$ where $\hat{c}_{2j-1}$ and $\hat{c}_{2j}$ means we skip over $c_{2j-1}$ and $c_{2j}.$ Then \[\hat{\Theta}_{n-r-2}(e_s) = \sum_{1 \leq j \leq r-k+2} e_{t_j}.\] Now for some fixed $j,$ consider $\phi_0(e_{t_j}).$ This is a sum of polytabloids for the $(n-r,1^r)$-tableaux which have in the first column:
\begin{itemize}
    \item $a_1, b_1$
    \item exactly one of $a_i$ or $b_i$ for $i = 2, \ldots, n-r-2,$
    \item exactly one of $c_{2j-1}$ or $c_{2j}$
    \item all but one of $c_l$ for $l \in \{1, \ldots, 2r-n+4\}\setminus\{2j-1, 2j\}.$
\end{itemize}

Fix one such tableau $u.$ If the first column is missing $c_l$ for $l$ even, then $e_u$ appears $\phi_0(e_{t_{l/2}});$ if the first column is missing $c_l$ for $l$ odd, then $e_u$ appears in $\phi_0(e_{t_{(l+1)/2}}).$ Hence $\phi_0\circ\hat{\Theta}_{n-r-2} = 0$ and $\Ker(\phi_0) \supseteq \Ker(\hat{\Theta}_{n-r-1}).$

We show that $\Ker(\phi_0) \subseteq \Ker(\hat{\Theta}_{n-r-1})$ by showing that $\Dim(\Ker(\phi_0)) \leq \Dim(\Ker(\hat{\Theta}_{n-r-1})).$ For $\lambda \vdash n,$ we will write $d_{\lambda}$ for $\Dim(S^{\lambda}_2),$ and for $0 \leq 2l \leq n-r$ write $k_l$ for $\Dim(\Ker(\phi_l)).$ Then we have
\begin{align*}
    d_{(n-r,1^r)} &= \sum_{0 \leq l \leq \floor{(n-r)/2}} \Dim(N_l) \\
                &= d_{(r+1,n-r-1)} - k_0 +  \sum_{1 \leq l \leq \floor{(n-r)/2}} d_{(r+2l, n-r-2l)} - k_l.   
\end{align*}

By restricting $S^{(r+2l, n-r-2l)}_2$ we get $d_{(r+2l, n-r-2l)} = d_{r+2l-1, n-r-2l} + d_{r+2l, n-r-2l-1},$ and so 
\begin{align}\label{dim_sum_1}
d_{(n-r,1^r)} = d_{(r+1, n-r-2)} + \sum_{0 \leq k \leq n-r-1} d_{(r+k,n-r-1-k)} - \sum_{0 \leq l \leq \floor{(n-r)/2}}k_l.    
\end{align}

Additionally, by restricting $S^{(n-r,1^r)}_2$ we also get $d_{(n-r,1^r)} = d_{(n-r-1,1^r)} + d_{(n-r,1^{r-1})}.$ Note that $n-1$ is odd, and hence $S^{(n-r-1,1^r)}_2$ and $S^{(n-r,1^{r-1})}_2$ are both self-dual, so $d_{(n-r-1,1^r)} = d_{(r+1, 1^{n-r-2})}$ and $d_{(n-r,1^{r-1})} = d_{(r, 1^{n-r-1})}.$ By the filtration \Cref{FPL} we also get \[d_{(r+1, 1^{n-r-2})} = \sum_{0 \leq k \leq \floor{(n-r-2)/2}} d_{(r+1+2k, n-r-2 - 2k)}\] and \[d_{(r, 1^{n-r-1})} = \sum_{0 \leq k \leq \floor{(n-r-1)/2}} d_{(r+2k, n-r-1 - 2k)},\] hence 
\begin{align}\label{dim_sum_2}
d_{(n-r,1^r)} = \sum_{0 \leq k \leq n-r-1} d_{(r+k, n-r-1-k)}.    
\end{align}

By equating \eqref{dim_sum_1} and \eqref{dim_sum_2} we get $d_{(r+1, n-r-2)} = \sum_{0 \leq l \leq \floor{(n-r)/2}} k_l,$ hence $k_0 \leq d_{(r+1, n-r-2)} = 
\Dim(\Im(\hat{\Theta}'_{n-r-2})) = \Dim(\Ker(\hat{\Theta}'_{n-r-1})) = \Dim(\Ker(\hat{\Theta}_{n-r-1}))$ as required, and $\Ker(\phi_0) = \Ker(\hat{\Theta}_{n-r-1}).$

Hence $N_0 = S^{*(r,n-r)}_2 \subseteq S^{(n-r,1^r)}_2$ is a quotient of $S^{(r+1, n-r-1)}_2$ isomorphic to a submodule of $S^{(r,n-r)}_2.$ 

Finally, we need to show that in fact for $0 < l \leq \floor{(n-r)/2} , \phi_l$ are isomorphisms. Given the exact sequence
\[0 \xrightarrow{\hat{\Theta}_{-1}} S^{(n)}_2 \xrightarrow{\hat{\Theta}_0} S^{(n-1,1)}_2 \xrightarrow{\hat{\Theta}_1} \ldots \xrightarrow{\hat{\Theta}_{k-2}} S^{(k+1,k-1)}_2 \xrightarrow{\hat{\Theta}_{k-1}} S^{(k,k)}_2 \xrightarrow{\hat{\Theta}_k} 0,\] and the above result, we obtain the exact sequence 
\[0 \xrightarrow{\hat{\Theta}_{-1}} S^{(n)}_2 \xrightarrow{\hat{\Theta}_0} S^{(n-1,1)}_2 \xrightarrow{\hat{\Theta}_1} \ldots \xrightarrow{\hat{\Theta}_{n-r-2}} S^{(r+1,n-r-1)}_2 \xrightarrow{\phi_0} S^{*(r,n-r)}_2 \xrightarrow{0} 0.\] 

We get from the exact sequence that \begin{align}\label{dim_sum_3}
\Dim(S^{*(r,n-r)}_2) + \sum_{1 \leq k \leq \floor{(n-r)/2}}d_{(r+2k,n-r-2k)} = \sum_{0 \leq k \leq \floor{(n-r-1)/2}}d_{(r+1+2k, n-r-1-2k)}.    
\end{align}
From the maps $\phi_l,$ we have 
\begin{align}\label{dim_ineq_1}
d_{(n-r,1^r)} \leq \Dim(S^{*(r,n-r)}) + \sum_{1 \leq k \leq \floor{(n-r)/2}}d_{(r+2k,n-r-2k)} = \sum_{0 \leq k \leq \floor{(n-r-1)/2}}d_{(r+1+2k, n-r-1-2k)}
\end{align} 
where the equality in \eqref{dim_ineq_1} comes from \eqref{dim_sum_3}. However, by taking the dual and the filtration as in \Cref{FPL} we also have \[d_{(n-r,1^r)} = d_{(r+1, 1^{n-r-1})} = \sum_{0 \leq k \leq \floor{(n-r-1)/2}}d_{(r+1+2k, n-r-1-2k)},\] hence \eqref{dim_ineq_1} must in fact be an equality, and for $0 < l \leq (n-r)/2$ the maps $\phi_l$ are surjections, so the quotients $N_l / N_{l-1} \isom S^{(r+2l, n-r-2l)}_2$ as required.   

Hence, when $\lambda = (n-r,1^r)$ with $n$ even and $n-r \leq r, S_2^\lambda$ has a filtration (from bottom to top) by the modules $S_2^{*(r,n-r)}, S_2^{(r+2,n-r-2)}, \ldots, S_2^{(r+2l, n-r-2l)}, \ldots,$ where $S_2^{*(r,n-r)}$ is a quotient of $S_2^{(r+1,n-r-1)}$ isomorphic to a submodule of $S_2^{(r,n-r)}.$

\end{proof}

\begin{example}\label{Example_S_2^(6_1^2)}
    With \Cref{FPL,second_filtration}, we can calculate the full submodule structure of $S_2^{(6,1^2)}.$ In particular, $S_2^{(6,1^2)}$ is filtered, from bottom to top, by $S_2^{(6,2)}$ and $S_2^{(8)}.$ $S_2^{(6,2)}$ is uniserial with socle isomorphic to $D_2^{(7,1)}$ and head isomorphic to $D_2^{(6,2)},$ and $S_2^{(8)}$ is $D_2^{(8)}.$ Hence the socle of $S_2^{(6,1^2)}$ contains $D_2^{(7,1)}$ but can not contain $D_2^{(6,2)},$ and the head of $S_2^{(6,1^2)}$ contains $D_2^{(8)}$ but can not contain $D_2^{(7,1)}.$
    
    $S_2^{(6,1^2)}$ is also filtered by, from bottom to top, the dual of $S_2^{(7,1)}$ and the dual of $S_2^{*(5,3)}.$ The dual of $S_2^{(7,1)}$ is uniserial with socle isomorphic to $D_2^{(7,1)}$ and head isomorphic to $D_2^{(8)},$ and the dual of $S_2^{*(5,3)}$ is isomorphic to $D_2^{(6,2)}.$ Hence the socle of $S_2^{(6,1^2)}$ contains $D_2^{(7,1)}$ but can not contain $D_2^{(8)},$ and the head of $S_2^{(6,1^2)}$ contains $D_2^{(6,2)}$ but can not contain $D_2^{(7,1)}.$

    So there are $3$ composition factors, $D_2^{(8)} ,D_2^{(7,1)}$ and $D_2^{(6,2)},$ each with multiplicity $1.$ The only factor that can appear in the socle is $D_2^{(7,1)},$ and the head must be exactly $D_2^{(8)} \oplus D_2^{(6,2)}.$ This gives the full submodule structure of $S_2^{(6,1^2)},$ which we illustrate below with the submodule lattice. 
    
    In the diagram below, the vertices represent submodules of $S_2^{(6,1^2)},$ and the nodes are labelled by the dimension of the submodule. The directed edge from a vertex $u$ to a vertex $v$ indicates that the submodule associated to $u$ is a maximal submodule of the submodule associated to $v$, and the edges are labelled by their irreducible subquotients. We also label the submodules $M_{-1}, M_0, M_1$ and the corresponding duals of $N_{-1}, N_0, N_1$. 

\begin{tabular}{|c|}
    \hline
    \begin{tikzpicture}[main/.style = {draw, circle}, scale = 3]
    \node at (0,-1/2) {The submodule lattice for $S^{( 6, 1^2 )}_2$};
    
    \node[main] (0) at (0,0) {$0$};
    \node[main] (6) at (0,1) {$6$};
    \node[main] (7) at (-1/2,2) {$7$};
    \node[main] (20) at (1/2,2) {$20$};
    \node[main] (21) at (0,3) {$21$};
    
    \draw[->] (0) edge ["$D_2^{(7,1)}$"', pos = 0.5] (6);
    
    \draw[->] (6) edge ["$D_2^{(8)}$", pos = 0.5] (7);
    \draw[->] (6) edge ["$D_2^{(6,2)}$"', pos = 0.5] (20);
    
    \draw[->] (7) edge ["$D_2^{(6,2)}$", pos = 0.5] (21);
    
    \draw[->] (20) edge ["$D_2^{(8)}$"', pos = 0.5] (21);

    \node at (0.3,0) {$M_{-1}$};
    \node at (0.8,2) {$M_0$};
    \node at (0.3,3) {$M_1$};
    \node at (-0.3,0) {$N_1^*$};
    \node at (-0.8,2) {$N_0^*$};
    \node at (-0.3, 3) {$N_{-1}^*$};
    \end{tikzpicture}\\
    \hline
\end{tabular}
\end{example}

\newpage
\section{Classification of Uniserial Hook Specht Modules in Characteristic 2}\label{Classification of Uniserial Hook Specht Modules in Characteristic 2}

In this section, our goal is to classify all uniserial hook Specht modules in characteristic $2.$ We use \Cref{FPL,2_part_uniserial} to find which hook Specht modules are filtered only by uniserial $2$-part Specht modules, and then refine to hook Specht modules which have simple socle \cite[Lemma 4.1, Theorem 4.2, Theorem 4.4]{murphy1982submodule}. This gives us a finite list of families of hook Specht modules to check.

First we observe that any subquotient of a uniserial module must also be uniserial, hence for a hook Specht module in characteristic $2$ to be uniserial, it is necessary that all of the $2$-part Specht modules that appear in its filtration must be uniserial. The result below shows that if $n-r \geq r \geq 30,$ then the hook Specht module $S_2^{(n-r,1^r)}$ has at least one non-uniserial $2$-part Specht module in its filtration, and hence is not uniserial.

\begin{lemma}\label{not_uniserial_great_than_29}
    Let $\lambda = (n-r,1^r) \vdash n,$ with $0 \leq r \leq n-r.$ If $r \geq 30$ then $S^\lambda_2$ is not uniserial.
\end{lemma}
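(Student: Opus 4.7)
The plan is to use \Cref{FPL} to express $S_2^{(n-r,1^r)}$ as a filtered module whose layers are $S_2^{(n-r+2k,\,r-2k)}$ for $0 \leq k \leq \lfloor r/2 \rfloor$, and then to exhibit a single $k$ for which the layer fails the uniseriality test of \Cref{2_part_uniserial}. Since any subquotient of a uniserial module is uniserial, producing one non-uniserial layer forces $S_2^{(n-r,1^r)}$ itself to be non-uniserial, which is what I want.

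Set $\alpha_k := (n-r+2k)-(r-2k)+1 = n - 2r + 4k + 1$, the integer on which \Cref{2_part_uniserial} is based. Because $\alpha_{k+1}-\alpha_k = 4$, the residue $\alpha_k \bmod 32$ cycles with period $8$ in $k$, so every residue class in $\mathbb{Z}/32\mathbb{Z}$ reachable from $\alpha_0$ is hit by some $k \in \{0,1,\ldots,7\}$; for such $k$ and $r \geq 30$ we retain $\lambda_2 = r-2k \geq 16$.

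I would then case-split on $\alpha_0 \bmod 4$ and, in each case, prescribe the next few binary digits of $\alpha_k$ so that the quantities $a, b, c$ of \Cref{2_part_uniserial} are forced to be small. The worst subcase is $n$ odd with $\alpha_0 \equiv 0 \pmod 4$: pick $k \in \{0,\ldots,7\}$ with $\alpha_k/4 \equiv 5 \pmod 8$, giving $a=2$, $b=3$, $c=4$ with $\alpha_k$ not a power of $2$; condition (2) of \Cref{2_part_uniserial} then requires $2^c = 16 > \lambda_2$, which fails as $\lambda_2 \geq 16$. The other subcases are lighter: for $n$ even with $\alpha_0 \equiv 1 \pmod 4$ one takes $k \in \{0,1\}$ with $(\alpha_k - 1)/4$ positive and odd, obtaining $c=2$, $b=1$ and contradiction against $4 > \lambda_2 \geq 28$; the case $\alpha_0 \equiv 3 \pmod 4$ gives $c=1$, $b=2$ against $6 > \lambda_2 \geq 28$; the case $n$ odd with $\alpha_0 \equiv 2 \pmod 4$ is handled similarly with a choice of $k \in \{0,1,2,3\}$ making $\alpha_k/2 \equiv 5 \pmod 8$. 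The exceptional configuration $\alpha_0 = 1$ (which makes $S_2^{(n-r,r)}$ uniserial because $\alpha$ is a power of $2$) is sidestepped by moving to the $k=1$ layer, where $\alpha_1 = 5$.

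The bound $r \geq 30$ is essentially tight for this strategy, and pinpointing this is the only real obstacle: hitting the desired residue mod $32$ may cost $k=7$, after which the inequality $2^c \leq \lambda_2 = r - 2k$ collapses to $r \geq 30$. Once the window and residue are chosen, everything reduces to a direct reading of bits from the binary expansion of $\alpha_k$ and a mechanical application of \Cref{2_part_uniserial}.
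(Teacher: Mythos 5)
Your proposal is correct and follows the same global strategy as the paper: filter $S_2^{(n-r,1^r)}$ by $2$-part Specht modules via \Cref{FPL}, use the fact that subquotients of a uniserial module are uniserial, and show that some layer $S_2^{(n-r+2k,\,r-2k)}$ fails the test of \Cref{2_part_uniserial}. Where you genuinely differ from the paper is in how the finite verification is carried out. The paper invokes \Cref{period_2_part} to reduce to $n \bmod 32$, then presents a $32\times 2$ table listing, for each residue class and each parity of $r$, the smallest $s\leq 31$ with $S_2^{(n-s,s)}$ non-uniserial. You instead analyse $\alpha_k = n-2r+4k+1$, note that $\alpha_k \bmod 32$ cycles with period $8$ in $k$, and in each of the four cases determined by $\alpha_0 \bmod 4$ steer to a residue with small $a,b,c$; the bottleneck case $\alpha_0 \equiv 0 \pmod 4$ needs $\alpha_k\equiv 20 \pmod{32}$, possibly costing $k=7$, and forces $2^c = 16 \leq \lambda_2 = r-2k$, which is exactly what produces the bound $r\geq 30$. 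Your organisation has the advantage of explaining \emph{why} $30$ is the right threshold, rather than just observing that the table closes; the paper's table is more explicit and less prone to bit-level slips. Two of your subcases are stated more tersely than they are actually argued: for $\alpha_0\equiv 3\pmod 4$, the claim $b=2$ requires choosing $k\in\{0,1\}$ with $\alpha_k\equiv 3\pmod 8$ (otherwise $b$ can be larger), and for $\alpha_0\equiv 2\pmod 4$ you need $k\in\{0,1,2,3\}$ with $\alpha_k\equiv 10\pmod{16}$ as you indicate. In each case such a $k$ always exists because the arithmetic progression $\alpha_0+4k$ exhausts the relevant residue class, so the argument is sound, but a final write-up should state these choices explicitly.
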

\begin{proof}

Since $S_2^{(n-r,1^r)}$ has subquotients isomorphic to $S_2^{(n-r+2k, r-2k)}$ for $0 \leq 2k \leq r,$ it suffices to show that if $r$ is even, then at least one of $S_2^{(n)}, S_2^{(n-2,2)}, \ldots, S_2^{(n-30,30)}$ is not uniserial; and if $r$ is odd then at least one of $S_2^{(n-1,1)}, S_2^{(n-3,3)}, \ldots, S_2^{(n-31,31)}$ is not uniserial. By \Cref{period_2_part}, for fixed $s\leq 31,$ the uniseriality of $S_2^{(n-s,s)}$ depends only on $n \mod 32,$ which can then be checked using \Cref{2_part_uniserial}. So there are only finitely many cases to check, and for each congruence class modulo $32$ we give the smallest value of $s \leq 31$ with $s \equiv r \mod 2$ for which $S_2^{(n-s,s)}$ is not uniserial. We have two tables, one for $r$ even and one for $r$ odd.

\[\begin{array}{|c|c|c|c|c|}
\cline{1-2} \cline{4-5}
\multicolumn{2}{|c|}{r \text{ is even}} & & \multicolumn{2}{|c|}{r\text{ is odd}} \\
\cline{1-2} \cline{4-5}
n \mod 32 & s &  & n \mod 32 & s  \\
\cline{1-2} \cline{4-5}
 0 & 6     && 0 & 7     \\
 1 & 12    && 1 & 23    \\
 2 & 8     && 2 & 7     \\
 3 & 24    && 3 & 13    \\
 4 & 4     && 4 & 9     \\
 5 & 14    && 5 & 25    \\
 6 & 6     && 6 & 5     \\
 7 & 26    && 7 & 15    \\
 8 & 6     && 8 & 7     \\
 9 & 8     && 9 & 27    \\
10 & 8     && 10 & 7    \\
11 & 28    && 11 & 9    \\
12 & 4     && 12 & 9    \\
13 & 10    && 13 & 29   \\
14 & 6     && 14 & 5    \\
15 & 30    && 15 & 11   \\
16 & 6     && 16 & 7    \\
17 & 12    && 17 & 31   \\
18 & 8     && 18 & 7    \\
19 & 16    && 19 & 13   \\
20 & 4     && 20 & 9    \\
21 & 14    && 21 & 17   \\
22 & 6     && 22 & 5    \\
23 & 18    && 23 & 15   \\
24 & 6     && 24 & 7    \\
25 & 8     && 25 & 19   \\
26 & 8     && 26 & 7    \\
27 & 20    && 27 & 9    \\
28 & 4     && 28 & 9    \\
29 & 10    && 29 & 21   \\
30 & 6     && 30 & 5    \\
31 & 22    && 31 & 11   \\
\cline{1-2} \cline{4-5}
\end{array}\]

\end{proof}

\begin{remark}
    Note that the $2$-part Specht modules that appear in the filtrations of $S^{(52,1^{29})}$ and $S^{(51,1^{28})}$ are all uniserial, and so $30$ is the smallest $r$ for which this result holds for general $n$.
\end{remark}

We get the following results from \cite[Lemma 4.1, Theorem 4.2, Theorem 4.4]{murphy1982submodule}. Note that Murphy gives some results $\mod 2^{L(r-1)},$ which can be written $\mod 2^{L(r)}$ as in the third and fourth bullet points below:

\begin{lemma}\label{hook_unique_minimal_submodule}
    Let $0 \leq r \leq n-r.$ Then $S^{(n-r,1^r)}$ has a unique minimal submodule if and only if one of the following occurs:
    \begin{itemize}
        \item $r \equiv -1, n \equiv -1 \mod 2^{L(r)}$ (hence $n-r \equiv 0 \mod 2^{L(r)}),$
        \item $r \equiv -1, n \equiv -2 \mod 2^{L(r)}$ (hence $n-r \equiv -1 \mod 2^{L(r)}),$
        \item $r \equiv -2, n \equiv -3 \mod 2^{L(r)}$ (hence $n-r \equiv -1 \mod 2^{L(r)}),$
        \item $r \equiv 2^{L(r)-1}, n \equiv 0 \mod 2^{L(r)}$ (hence $n-r \equiv 2^{L(r)-1} \mod 2^{L(r)}),$
        \item $r \equiv 2^{L(r)-1}, n \equiv 2^{L(r)-1} \mod 2^{L(r)}$ (hence $n-r \equiv 0 \mod 2^{L(r)}).$
    \end{itemize}
\end{lemma}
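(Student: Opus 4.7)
The plan is to use the filtration from \Cref{FPL} together with the explicit socle computation in \Cref{2-part_socle}. Since $r \leq n-r$, the Specht module $S_2^{(n-r,1^r)}$ contains $M_0 \cong S_2^{(n-r,r)}$ as its bottom filtration factor, and $\mathrm{Soc}(M_0)$ is simple by \Cref{2-part_socle_pre}; call it $D$. Then $D$ is automatically a minimal submodule of $S_2^{(n-r,1^r)}$, so the claim is equivalent to asserting that $D$ is the \emph{unique} minimal submodule.

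First I would apply \Cref{2-part_socle} to identify $D$ in each of the five listed cases. A direct calculation (solving $\alpha + d \equiv 0 \bmod 2^{L(r)}$ for $\alpha = n - 2r + 1$) shows that $D$ is either the trivial module $D_2^{(n)}$ in cases two and three (where $d = r$, so $n-r \equiv -1 \bmod 2^{L(r)}$) or $D_2^{(n-1,1)}$ in cases one, four, and five (where $d \in \{r-1,\, 2^{L(r)}-1\}$). Next I would show that $D$ is the entire socle. For $D = D_2^{(n)}$, the remark after \Cref{2_part_trivial_socle} via \cite[Theorem 24.4]{James1978} characterises exactly when the trivial module embeds into $S_2^{(n-r,1^r)}$; combined with \Cref{2-part_socle} applied to each higher filtration factor $S_2^{(n-r+2k, r-2k)}$, one verifies that no additional simple submodule arises. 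For $D = D_2^{(n-1,1)}$, one uses an analogous inspection of the socles of higher filtration pieces together with an $\mathrm{Ext}^1$-vanishing argument for adjacent 2-part modules in the filtration.

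The main obstacle is the converse direction: for $(n,r)$ outside the five listed cases, one must exhibit a second simple submodule of $S_2^{(n-r,1^r)}$ explicitly. This is subtle because simply having $\mathrm{Soc}(M_0) \cong D_2^{(n-1,1)}$ is not sufficient for uniqueness; there exist $(n,r)$ outside the five cases for which $\mathrm{Soc}(M_0) \cong D_2^{(n-1,1)}$ but $S_2^{(n-r,1^r)}$ has at least two distinct minimal submodules, so the fine congruence data on $r \bmod 2^{L(r)}$ really is needed. A second simple submodule must come from ``pulling down'' a composition factor of some $M_k/M_{k-1}$ into a genuine submodule of $S_2^{(n-r,1^r)}$. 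I would use the maps $\hat{\Theta}_i$ from \Cref{2_part_exact_sequence} to track how simples embed between adjacent 2-part Specht modules in the filtration, and ultimately appeal to Murphy's endomorphism-ring analysis in \cite[\S 4]{murphy1982submodule}, from which this classification originally comes.
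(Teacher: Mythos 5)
The paper gives no proof of this lemma at all: it is stated as an immediate consequence of \cite[Lemma 4.1, Theorem 4.2, Theorem 4.4]{murphy1982submodule}, the only remark being a notational shift from congruences mod $2^{L(r-1)}$ to congruences mod $2^{L(r)}$ in two of the bullet points. Your proposal is therefore a genuinely different and more ambitious route: you attempt to rederive Murphy's classification from the paper's own filtration (\Cref{FPL}) together with the explicit $2$-part socle formula (\Cref{2-part_socle}). The framework is a sensible place to start, and your identification of $D = \Soc(S_2^{(n-r,r)})$ in each of the five cases is essentially correct, though the values of $d$ you quote are slightly off: in all of cases one, four and five the relevant shift is $d = r-1$, not $2^{L(r)}-1$. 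Indeed $d = 2^{L(r)}-1$ would equal $r$ in case one (since there $r = 2^{L(r)}-1$) and would produce the trivial module $D_2^{(n)}$ rather than $D_2^{(n-1,1)}$, and more generally $d \le \lambda_2 = r$ forces $d < 2^{L(r)}-1$ whenever $r < 2^{L(r)}-1$.

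There are genuine gaps that prevent the sketch from closing. For the forward direction, it is not enough to ``inspect the socles of the higher filtration pieces.'' Because each $M_s/M_{s-1}$ is a subquotient rather than a submodule of $S_2^{(n-r,1^r)}$, a simple submodule of $M_s/M_{s-1}$ need not lift to a submodule of the whole Specht module, and conversely $\Soc(S_2^{(n-r,1^r)})$ need not lie inside $M_0$; \Cref{simple_in_socle} gives only a one-way implication. Determining which simples in the higher layers actually pull down into genuine submodules is exactly the extension problem your $\mathrm{Ext}^1$ remark gestures at without carrying out, and that is where the substance of the argument lives. For the converse you concede the point outright: exhibiting a second minimal submodule outside the five congruence families is delegated back to Murphy's endomorphism-ring analysis, which is precisely the source the paper cites for the entire statement. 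As written, your proposal is a plausible outline of what a self-contained proof might look like, but the hardest step is not supplied and ultimately reduces to the same external citation the paper relies on, so the additional machinery does not yet buy a new proof.
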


Combining \Cref{hook_unique_minimal_submodule,not_uniserial_great_than_29}, we get the remaining following cases for when $S_2^{(n-r,1^r)}$ has a unique minimal submodule and $r < 30$. As before, we only need to define $n$ modulo $2^{L(r)}.$

\[
\begin{array}{|c|c|}
    \hline
         n\mod 2^{L(r)} & r\\
         \hline
         0 & 0 \\
         0 & 1 \\ 
         1 & 1 \\
         0 & 2 \\
         1 & 2 \\
         2 & 2 \\
         2 & 3 \\
         3 & 3 \\
         0 & 4 \\
         4 & 4 \\
         5 & 6 \\
         6 & 7 \\
         7 & 7 \\
         0 & 8 \\
         8 & 8 \\
         13 & 14 \\
         14 & 15 \\
         15 & 15 \\
         0 & 16 \\
         16 & 16 \\
\hline         
\end{array}
\]
\qquad 

By applying the same logic as in \Cref{not_uniserial_great_than_29}, we can show that some of these hook Specht modules contain a non-uniserial $2$-part Specht module in its filtration. Given $r,$ and $n \mod 2^{L(r)},$ we find the smallest $s$ such that $S_2^{(n-s,s)}$ appears in the filtration of $S_2^{(n-r,1^r)}$ is not uniserial.

\[
    \begin{array}{|c|c|c|c|}
    \hline
         n \mod 2^{L(r)} & r & s  \\
         \hline
         4 & 4 & 4  \\
         6 & 7 & 5  \\
         0 & 8 & 6  \\         
         8 & 8 & 6  \\
         13 & 14 & 10  \\
         14 & 15 & 5  \\
         15 & 15 & 11 \\
         0 & 16 & 6 \\
         16 & 16 & 6 \\
         \hline         
    \end{array}
\]
\qquad

That leaves the following cases, listed in the table below. 

\[
\begin{array}{|c|c|}
    \hline
         n\mod 2^{L(r)} & r\\
         \hline
         0 & 0 \\
         0 & 1 \\ 
         1 & 1 \\
         0 & 2 \\
         1 & 2 \\
         2 & 2 \\
         2 & 3 \\
         3 & 3 \\
         0 & 4 \\
         5 & 6 \\
         7 & 7 \\
\hline         
\end{array}
\]
\qquad

For the final proof, the following lemma will be useful.
\begin{lemma}\label{simple_in_socle}
Let $M$ be a module over an algebra, with filtration $M = M_r > \ldots > M_0 = 0.$ If $T$ is a module and $\Hom( T , M) \neq 0,$ then there is some $s$ such that $\Hom ( T , M_s / M_{s-1}) \neq 0.$
\end{lemma}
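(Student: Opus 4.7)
The plan is to take any nonzero homomorphism $f \in \Hom(T,M)$ and locate the smallest step of the filtration which contains its image. Since $M_0 = 0$ and $M_r = M$, the set $\{i \mid f(T) \subseteq M_i\}$ is a nonempty subset of $\{0,1,\ldots,r\}$, and because $f \neq 0$ it does not contain $0$. Let $s$ be its minimum, so $s \geq 1$, $f(T) \subseteq M_s$, and $f(T) \not\subseteq M_{s-1}$.

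View $f$ as a map into $M_s$ and compose with the canonical projection $\pi : M_s \twoheadrightarrow M_s/M_{s-1}$, obtaining $\pi \circ f : T \to M_s/M_{s-1}$. The kernel of $\pi$ is exactly $M_{s-1}$, so $\pi \circ f$ is zero if and only if $f(T) \subseteq M_{s-1}$; by the minimality of $s$ this does not occur. Hence $\pi \circ f$ is a nonzero element of $\Hom(T, M_s/M_{s-1})$, as required.

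This is really just the standard fact that any homomorphism from $T$ to a filtered module induces a nonzero homomorphism to some filtration subquotient, so there is essentially no obstacle to overcome; the only thing to be careful about is that one should argue with the image landing in the first $M_s$ that contains it, rather than trying to start at the top and ``cut off'' the filtration, since $T$ is not assumed simple and one cannot in general restrict a map to a submodule of $T$. The step above avoids this issue cleanly.
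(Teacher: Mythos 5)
Your argument is correct and is essentially identical to the paper's: both take a nonzero $f\colon T\to M$, pick $s$ minimal with $\Im(f)\subseteq M_s$, and compose with the quotient map $M_s\twoheadrightarrow M_s/M_{s-1}$, which is nonzero precisely by minimality of $s$. The only difference is cosmetic (direct argument versus a one-line contradiction).
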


\begin{proof}
    Let $\theta : T \to M $ be a non-zero homomorphism, and let $s$ be minimal such that $\Im(\theta) \subseteq M_s.$ Then, after composing with the quotient of $M_{s-1},$ there is a map $\theta' : T \to M_s / M_{s-1}.$ Assume for contradiction that this map is $0,$ then $\Im(\theta) \subseteq M_{s-1},$ a contradiction with the minimality of $s.$ Hence $\Hom ( T , M_s / M_{s-1} ) \neq 0.$
\end{proof}

We finish the proof by considering cases in the above table. Recall that if $n-r < r,$ the submodule structure of $S_2^{(n-r,1^r)}$ is dual to that of $S_2^{(r+1, n-r-1)}$ and $r+1 > n-r-1,$ and so $S_2^{(n-r,1^r)}$ is uniserial if and only if $S_2^{(r+1,n-r-1)}$ is. Hence there is no loss in generality in assuming that $n-r\geq r.$

\begin{theorem}\label{classification_of_uniserial_hooks}
    Let $\lambda = (n-r,1^r) \vdash n, 0 \leq r < n-r.$ Then $S^\lambda_2$ is uniserial if and only if one of the following occurs:
    \begin{itemize}
        \item $r=0;$
        \item $r=1;$ 
        \item $r=2$ and $n \equiv 1,2 \mod 4;$
        \item $r=3$ and $n \equiv 3 \mod 4.$
    \end{itemize}
\end{theorem}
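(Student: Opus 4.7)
The discussion preceding the theorem has reduced the problem to the finite list of cases in the table above: in each, $S_2^{(n-r,1^r)}$ has a unique minimal submodule (by \Cref{hook_unique_minimal_submodule}) and every $2$-part Specht module appearing in its FPL filtration (\Cref{FPL}) is uniserial. My plan is to handle each remaining entry individually, deciding whether the hook Specht module is uniserial.

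The cases $r=0$ and $r=1$ are immediate: $S_2^{(n)} = D_2^{(n)}$ is simple, and $S_2^{(n-1,1)}$ is a $2$-part Specht module with $\lambda_2 = 1$, hence uniserial by the corollary after \Cref{2_part_uniserial}. For the other table entries, my strategy is to combine the bottom-up filtration of \Cref{FPL} with a top-down filtration (coming from \Cref{odd_self_dual} when $n$ is odd, or from \Cref{second_filtration} applied to the dual hook when $n$ is even) and \Cref{simple_in_socle}. The former forces $\Soc(S_2^{(n-r,1^r)})$ to embed into $\Soc(S_2^{(n-r,r)})$, which is simple and explicit by \Cref{2-part_socle}; the latter dually describes the head in terms of heads of $2$-part Specht factors, which are read off from \Cref{2_part_submod_structure_new_proof,submod_structure_S(a_a)}.

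For the positive cases $(r, n\bmod 4) \in \{(2,1),(2,2),(3,3)\}$, I would verify uniseriality by peeling off one composition factor at a time. Starting from the known simple socle, quotienting by it produces a module whose FPL filtration has the same shape, with bottom factor $S_2^{(n-r,r)}/\Soc(S_2^{(n-r,r)})$ (still uniserial, by the structure in \Cref{2_part_submod_structure_new_proof}) and whose socle is again controlled by \Cref{simple_in_socle}. In each of these three configurations, the $2$-adic arithmetic of $\alpha = \lambda_1 - \lambda_2 + 1$ for the relevant $\lambda$ makes the socle at every stage simple and matches exactly the chain of simples predicted by the filtration, giving a uniserial composition series.

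For the negative cases $(r, n \bmod 2^{L(r)}) \in \{(2,0),(3,2),(4,0),(6,5),(7,7)\}$, I would exhibit an obstruction to uniseriality by producing two non-isomorphic simple quotients of $S_2^{(n-r,1^r)}$. One candidate comes from the head of the topmost $2$-part factor in \Cref{FPL}; another arises from the top of the dual filtration (\Cref{odd_self_dual} or \Cref{second_filtration}). A direct computation using \Cref{2_part_submod_structure_new_proof} and \Cref{2-part_socle} shows that in each of these five configurations the two candidate heads are genuinely different simple modules, so the head of $S_2^{(n-r,1^r)}$ contains at least two non-isomorphic summands and the module cannot be uniserial.

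The main obstacle is the bookkeeping in the negative cases, where one must confirm that the two candidate head simples are non-isomorphic. This reduces, by \Cref{period_2_part}, to finitely many checks of $2$-adic conditions on $\alpha$; each is short but must be done carefully because the boundary cases (where $\delta_I = \lambda_2$ in the notation of \Cref{2_part_submod_structure}) are precisely those separating the positive from the negative configurations.
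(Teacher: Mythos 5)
Your proposal correctly identifies the reduction to a finite table, correctly handles $r=0,1$, and your two-filtration head argument is essentially what the paper does for the negative cases $(r,n\bmod 2^{L(r)}) \in \{(2,0),(3,2),(4,0)\}$: in those cases the FPL filtration has a simple module at the top while the top factor of the dual filtration is $(S_2^{*(n-r-1,r+1)})^*$, whose head is (the dual of) the socle of $S_2^{(n-r-1,r+1)}$, and a short $2$-adic check via \Cref{2-part_socle} shows these two simples differ.

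However, your strategy genuinely breaks down in the remaining two negative cases $r=6,\ n\equiv 5 \pmod 8$ and $r=7,\ n\equiv 7 \pmod 8$. In both of these, $n$ is odd, so by \Cref{odd_self_dual} the dual filtration is just the FPL filtration reversed and dualised. The top of the FPL filtration is $S_2^{(n)}$ (resp.\ $S_2^{(n-1,1)}$), with simple head $D_2^{(n)}$ (resp.\ $D_2^{(n-1,1)}$); the top of the dual filtration is $(S_2^{(n-r,r)})^*$, whose head is the dual of $\Soc S_2^{(n-r,r)}$. But by \Cref{2-part_socle} (take $n=13, r=6$: $\alpha = 2$, $L(6)=3$, $\overline\alpha_3 = 2$, $2^3 - 6 = 2 \le 2$, so $d=6$, socle $= D_2^{(13)}$; or $n=15, r=7$: $\alpha=2$, $L(7)=3$, $\overline\alpha_3 = 2$, $2^3-7=1\le 2$, so $d=6$, socle $= D_2^{(14,1)}$), the socle of $S_2^{(n-r,r)}$ is exactly $D_2^{(n)}$ (resp.\ $D_2^{(n-1,1)}$). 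Since the James modules are self-dual, your two candidate heads \emph{coincide}, and the argument produces no obstruction. The paper handles these two cases by a different, length-counting argument: self-duality of $S_2^\lambda$ together with the assumption of uniseriality forces the composition series to be palindromic about the unique occurrence of $D_2^{(n-r,r)}$, and since the bottom three factors are those of the submodule $S_2^{(n-r,r)}$, the total length would have to be five; but $[S_2^\lambda : D_2^{(n-r+4,r-4)}] \neq 0$ and $D_2^{(n-r+4,r-4)}$ does not appear among those five, a contradiction. You would need to replace the two-heads argument by something of this flavour for $r=6,7$.

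A smaller caveat: in the positive cases the paper does not literally ``peel off factors'' iteratively. For $r=2,\ n\equiv 1$ and $r=3,\ n\equiv 3$ it simply notes there are three composition factors, the socle is simple by \Cref{hook_unique_minimal_submodule}, and the head is simple by self-duality, which forces uniseriality for a length-three module outright. For $r=2,\ n\equiv 2$ it intersects the two filtrations to pin down both socle and head and deduce the unique possible four-step composition series. Your inductive ``peeling'' approach is plausible but would need the socle of each successive quotient controlled, which is more delicate than the paper's direct argument; it is not clearly wrong, just not obviously complete as stated.
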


\begin{proof}
    In the case $r=0$ or $r=1, \lambda$ has at most $2$ parts and so \Cref{2_part_submod_structure_new_proof} gives the full combinatorial description. These Specht modules are either simple, or have a unique proper non-zero submodule.

    In the case $r=2$ and $n\equiv 1 \mod 4,$ or $r=3$ and $n \equiv 3 \mod 4, S_2^\lambda$ has a filtration from bottom to top with factors isomorphic to $S_2^{(n-r,r)}$ and $S_2^{(n-r+2,r-2)}$. $S_2^{(n-r,r)}$ itself is uniserial with $2$ composition factors; with $D_2^{(n-r+2,r-2)}$ in the socle and $D_2^{(n-r,r)}$ in the head, and $S_2^{(n-r+2,r-2)}$ is simple. As $S_2^\lambda$ has $3$ composition factors, has simple socle by \Cref{hook_unique_minimal_submodule} and is self-dual by \Cref{odd_self_dual} hence has simple head, $S_2^\lambda$ must therefore be uniserial.

    In the case $r=2$ and $n \equiv 2 \mod 4, S^\lambda_2$ has a filtration, from bottom to top, via $S^{(n-2,2)}_2$ and $S^{(n)}_2.$ $S^{(n-2,2)}_2$ itself is uniserial with composition factors, from bottom to top, $D^{(n-1,1)}_2, D^{(n)}_2,$ and $D^{(n-2,2)}_2$ by \Cref{2_part_submod_structure}. By considering the dual, we also get that $S^{\lambda}_2$ has a filtration from, bottom to top, by the dual of $S^{(n-1,1)}_2$ and the dual of $S^{*(n-3,3)}_2.$ The dual of $S^{(n-1,1)}_2$ is uniserial with composition factors, from bottom to top, $D^{(n-1,1)}_2$ and $D^{(n)}_2,$ and the dual of $S^{*(n-3,3)}_2$ is uniserial with composition factors, from bottom to top, $D^{(n-2,2)}_2$ and $D^{(n)}_2.$ Hence we must have that $S^\lambda_2$ is uniserial with composition factors, from bottom to top, $D^{(n-1,1)}_2, D^{(n)}_2, D^{(n-2,2)}_2, D^{(n)}_2.$

    We also need to justify that no other hook Specht modules in characteristic $2$ are uniserial. In particular, we need to show that $S_2^{(n-r,1^r)}$ is not uniserial in the following cases:
    \begin{itemize}
        \item $r=2, n \equiv 0 \mod 4;$
        \item $r=3, n \equiv 2 \mod 4;$
        \item $r=4, n \equiv 0 \mod 8;$
        \item $r=6, n \equiv 5 \mod 8;$
        \item $r=7, n \equiv 7 \mod 8.$
    \end{itemize}

    Assume $r=2$ and $n\equiv 0 \mod 4$ or $r=4$ and $n \equiv 0 \mod 8,$ and assume for contradiction that $S_2^{(n-r,1^r)}$ is uniserial. $S_2^\lambda$ has a filtration with $S_2^{(n)}$ at the top; and a filtration with the dual of $S_2^{*(n-r-1,r+1)}$ at the top. But $S_2^{*(n-r-1,r+1)},$ a submodule of $S_2^{(n-r-1,r+1)}$ does not have $S_2^{(n)}$ as a submodule for these values of $r$ and $n.$ This is a contradiction, and $S_2^\lambda$ must not be uniserial, in particular, the head is not simple.

    Similarly, let $r=3$ and $n \equiv 2 \mod 4$ and assume for contradiction that $S_2^{(n-3,3)}$ is uniserial. $S_2^\lambda$ has a filtration with $S_2^{(n-1,1)}$ at the top; and a filtration with the dual of $S_2^{*(n-4,4)}$ at the top. But $S_2^{*(n-4,4)},$ a submodule of $S_2^{(n-4,4)}$ does not have $D_2^{(n-1,1)}$ as a submodule for $r=3$ and $n \equiv 2 \mod 4.$ This is a contradiction, and $S_2^\lambda$ must not be uniserial.

    Finally, let $r=6$ and $n\equiv 5 \mod 8$ or $r=7$ and $n \equiv 7 \mod 8.$ Assume for a contradiction that $S_2^{(n-r,r)}$ is uniserial. Note that $S_2^\lambda$ is self-dual as $n$ is odd by \Cref{odd_self_dual}, and contains a submodule isomorphic to $S_2^{(n-r,r)},$ which is itself uniserial and the composition series has factors (from bottom to top) isomorphic to $D_2^{(n-r+6,r-6)}, D_2^{(n-r+2,r-2)}, D_2^{(n-r,r)}$ by \Cref{2_part_submod_structure_new_proof}. $D_2^{(n-r,r)}$ appears as a composition factor of $S_2^\lambda$ precisely once, so by self-duality and assumption of uniseriality, the composition series of $S_2^\lambda$ has simple factors, in order, $D_2^{(n-r+6,r-6)}, D_2^{(n-r+2,r-2)}, D_2^{(n-r,r)}, D_2^{(n-r+2,r-2)}, D_2^{(n-r+6,r-6)}.$ This is a contradiction as $[S_2^\lambda : D_2^{(n-r+4,r-4)}] \neq 0.$
\end{proof}

\newpage
\section{Further Questions}\label{Further Questions}

There are many potential avenues to continue the research in this paper.

\Cref{classification_of_uniserial_hooks} classifies all uniserial hook Specht modules in characteristic $2$; can this be extended to look at which hook Specht modules can be written as a direct sum of uniserial parts? Can one go further and give a full combinatorial description of the submodule structure of hook Specht modules in characteristic $2?$ 

Another nice class of modules over a ring are those whose submodule lattices are distributive lattices, we call such modules \emph{distributive modules}. More precisely, we say that a module $M$ is a distributive module if and only if for all submodules $U, V, W$ of $M,$ we have that $U \cap (V+W) = (U\cap V) + (U \cap W)$. 

\begin{theorem}
    Let $R$ be a ring and $M$ an $R$-module of finite length $n$. Then the following are equivalent.
    \begin{enumerate}
        \item $M$ is distributive. That is, the submodule lattice of $M$ is a distributive lattice.
        \item $M$ does not have a subquotient which is a direct sum of two isomorphic non-zero modules.
        \item $M$ does not have a subquotient which is a direct sum of two isomorphic simple modules. 
        \item $M$ has exactly $n$ simple-headed submodules.
        \item For each simple $R$-module $S$ and each integer $m \in \{1, \ldots, [M:S]\},$ there is a unique $S$-simple-headed submodule $N \subseteq M$ such that $[N:S] = m.$
        \item If $K, L \subseteq M$ and $K$ and $L$ have exactly the same composition factors with multiplicity, then $K=L.$ 
    \end{enumerate}
    If $R$ is an algebra over an infinite field, then the above are equivalent to the following condition.
    \begin{enumerate}
        \setcounter{enumi}{6}
        \item $M$ has only finitely many submodules.
    \end{enumerate}
\end{theorem}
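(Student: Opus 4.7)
The plan is to establish the equivalences via a web of implications combining lattice-theoretic and module-theoretic arguments.

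First, I would dispatch the easy equivalences $(1) \Leftrightarrow (2) \Leftrightarrow (3)$. The equivalence $(2) \Leftrightarrow (3)$ is essentially trivial: $(2)$ is stronger on its face, but any nonzero module contains a simple submodule $S$, so an $A \oplus A$ subquotient produces an $S \oplus S$ subquotient, giving $(3) \Rightarrow (2)$. For $(1) \Leftrightarrow (2)$ I would invoke Dedekind's classical theorem that a modular lattice is distributive if and only if it contains no $M_3$ sublattice (the diamond). The submodule lattice is always modular, and an $M_3$ sublattice --- namely three submodules $U, V, W$ with common intersection $K$ and common sum $N$ --- exhibits $N/K$ as a module with three pairwise-complementary submodules, so these three are pairwise isomorphic and $N/K \cong A \oplus A$ for the common isomorphism class $A$. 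Conversely, any $A \oplus A$ contains the two coordinate axes and the diagonal, forming an $M_3$.

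Second, I would prove $(1) \Rightarrow (6) \Rightarrow (2)$. For $(6) \Rightarrow (2)$: an $S \oplus S$ subquotient $N/K$ gives two distinct preimages in $N$ of the coordinate axes, both submodules of $M$ with identical composition factors. For $(1) \Rightarrow (6)$ I would induct on the length of $M$. Given distinct $K, L \subseteq M$ with identical composition factors, we have $K + L \supsetneq K \cap L$, and the distributive quotient $(K+L)/(K \cap L) = K/(K \cap L) \oplus L/(K \cap L)$ has two complementary nonzero summands with equal composition multiplicities; a strengthened inductive hypothesis, namely that submodules of a distributive module with equal composition factors are isomorphic, forces these summands to be isomorphic, producing the forbidden $A \oplus A$ subquotient.

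Third, the simple-headed submodule conditions $(4)$ and $(5)$ are connected by a direct count: in any module of finite length and for each simple $S$ and $m \in \{1, \ldots, [M:S]\}$, one constructs at least one $S$-simple-headed submodule of $S$-length $m$ from a Jordan--Hölder filtration, so there are always at least $n = \sum_S [M:S]$ simple-headed submodules in total; $(4)$ asserts exactly that this lower bound is tight, which is $(5)$. The link to distributivity is via Birkhoff's representation theorem: the join-irreducibles of the submodule lattice are precisely the simple-headed submodules, and a distributive lattice of length $n$ has exactly $n$ join-irreducibles, whereas a non-distributive modular lattice of the same length strictly exceeds this count (the diamond $M_3$ already has three join-irreducibles at length two). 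Thus $(1) \Leftrightarrow (4)$.

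Finally, for the infinite field condition $(7)$: $(1) \Rightarrow (7)$ because a distributive lattice of finite length has at most $2^n$ elements. Conversely, if $S \oplus S$ is a subquotient with $S$ simple, then $\mathbb{F}$ embeds into $\mathrm{End}_R(S)$ via its central action, and the graph submodules $\{(s, \lambda s) : s \in S\}$ for $\lambda \in \mathbb{F}$ produce infinitely many distinct submodules of $S \oplus S$, hence of $M$. The principal obstacle is the strengthened induction in $(1) \Rightarrow (6)$ --- showing that the two complementary summands are not merely of equal composition factors but genuinely isomorphic. An alternative route sidestepping this is to prove $(1) \Rightarrow (4)$ via Birkhoff, $(4) \Leftrightarrow (5)$ by counting, and $(5) \Rightarrow (6)$ by observing that every submodule is uniquely the sum of the simple-headed submodules it contains (again by Birkhoff), so composition factors determine the submodule.
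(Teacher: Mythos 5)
The paper states this theorem in Section 7 without proof (it is a standard result in the literature on diagram methods for modules, see the cited work of Benson), so there is no paper argument to compare against and the proposal must be judged on its own. Your outline of the equivalences $(1)\Leftrightarrow(2)\Leftrightarrow(3)$, of $(6)\Rightarrow(3)$, of $(1)\Rightarrow(4)$ via Birkhoff, and of $(1)\Leftrightarrow(7)$ is correct and standard. The one place you flag as a potential obstacle --- the implication $(1)\Rightarrow(6)$ --- is indeed a genuine gap, and the proposed workaround does not actually close it.

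The strengthened induction does not reduce: if $K\cap L=0$ and $K+L=M$, then $(K+L)/(K\cap L)$ has the same length as $M$, so you cannot invoke the inductive hypothesis. Worse, the strengthened statement (``submodules with the same composition factors are isomorphic'') is exactly what you need to conclude $K/(K\cap L)\cong L/(K\cap L)$, so the argument as phrased is essentially circular at the base case. The alternative route is also incomplete: $(4)\Leftrightarrow(5)$ ``by a direct count'' needs the non-obvious fact that in a distributive module the $S$-simple-headed submodules form a chain, and $(5)\Rightarrow(6)$ ``again by Birkhoff'' is circular because Birkhoff's representation applies only once the lattice is already known to be distributive, which is the conclusion you want.

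The fix is a direct argument for $(3)\Rightarrow(6)$ that avoids induction entirely. Suppose $K\neq L$ have the same composition factors and, without loss of generality, $K\not\subseteq L$. Choose $K_1$ with $K\cap L\subseteq K_1\subseteq K$ and $K_1/(K\cap L)\cong S$ simple. Since $[K/(K\cap L):S]=[L/(K\cap L):S]\geq 1$, there are $K\cap L\subseteq L_1\subsetneq L_2\subseteq L$ with $L_2/L_1\cong S$. A short computation using modularity gives $K_1\cap L_1=K\cap L$, $(K_1+L_1)/L_1\cong S$, and $L_2\cap(K_1+L_1)=L_1$, so
\[
(K_1+L_2)/L_1 \;=\; L_2/L_1\;\oplus\;(K_1+L_1)/L_1\;\cong\; S\oplus S,
\]
which contradicts $(3)$. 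This replaces the problematic induction and, together with $(3)\Rightarrow(4)$ and $(3)\Rightarrow(5)$ (both of which also become straightforward once you know $S$-simple-headed submodules form a chain, which follows from the same $S\oplus S$ trick applied to two incomparable ones), completes the cycle of implications cleanly.
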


For a general module $M,$ the submodule lattice can be very large and complicated, but if $M$ is distributive, there are nice diagrams which contain the same information as the submodule lattice of $M$ but with far fewer nodes and edges \cite{benson1985diagrams}.

It is clear that if $\lambda$ is a $2$-part partition, then $S_p^\lambda$ is distributive, as it is multiplicity-free. We have the following conjecture for hook Specht modules in characteristic $2.$

\begin{conjecture}
    Let $\lambda = (n-r,1^r) \vdash n$ with $0 \leq r \leq n-r.$ If $r \geq 10,$ then $S_2^\lambda$ is not distributive. 
\end{conjecture}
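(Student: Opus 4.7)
The plan is to prove non-distributivity of $S_2^{(n-r,1^r)}$ for $r \geq 10$ by exhibiting a subquotient isomorphic to $D \oplus D$ for some irreducible $D = D_2^\mu$; by the equivalence of conditions (1) and (3) in the characterisation theorem above, this suffices. Since distributivity is self-dual, we may assume throughout that $r \leq n-r$.

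First I would use the multiplicity formula from \Cref{FPL} together with the combinatorial description \Cref{2_part_submod_structure_new_proof} to locate a composition factor appearing in two different layers of the FPL filtration. For fixed $r \geq 10$, the periodicity result \Cref{period_2_part} reduces the problem to finitely many residue classes of $n$ modulo a suitable power of $2$. For each class one would identify some $\mu = (n-j, j)$ and indices $k_1 < k_2$ such that $D_2^\mu$ is a composition factor of both $S_2^{(n-r+2k_1, r-2k_1)}$ and $S_2^{(n-r+2k_2, r-2k_2)}$; using \Cref{2-part_socle_pre} one can additionally arrange that both copies sit as the socle of their respective $2$-part layer. These are then lifted to explicit submodules $U_1 \subseteq M_{k_1}$ and $U_2 \subseteq M_{k_2}$ of $S_2^{(n-r,1^r)}$, and the goal becomes to show that $(U_1 + U_2)/(U_1 \cap U_2) \cong D_2^\mu \oplus D_2^\mu$.

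The main obstacle, and the hard part of the argument, is the final splitting step: ruling out that the extension between the two copies is a non-split uniserial self-extension of $D_2^\mu$. A promising approach is to restrict to $S_{n-1}$ via the exact functors $e_i$ using \Cref{e_i S^lambda,e_i D^lambda}. Since restriction is exact, non-distributivity of a restriction would imply non-distributivity of the original module; if the restricted copies of $D_2^\mu$ land in distinct blocks (by \Cref{Nakayama's_Conjecture}) the desired splitting follows automatically. Alternatively, one can exploit Murphy's direct summand decompositions \cite{murphy1980decomposability} to isolate a decomposable subquotient directly. The threshold $r \geq 10$ appears natural because for smaller $r$ the hook Specht modules are either uniserial (\Cref{classification_of_uniserial_hooks}) or decompose into uniserial summands via Murphy's theorems; for $r \geq 10$, the interplay between the FPL filtration (\Cref{FPL}) and the dual filtration of \Cref{second_filtration} (or the self-duality of \Cref{odd_self_dual} when $n$ is odd) should accumulate enough combinatorial incompatibilities to force a $D \oplus D$ subquotient in every case.
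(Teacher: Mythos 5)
This statement is a \emph{conjecture} in the paper (it appears in the Further Questions section); the author gives no proof, and nothing in the text reduces it to earlier results, so there is no proof of record to compare against. Your proposal therefore has to stand on its own, and as written it is a strategy rather than a proof: the step you yourself flag as the obstacle is exactly where the argument is genuinely incomplete. Locating a composition factor $D_2^\mu$ of multiplicity at least two via the FPL filtration and James's multiplicity formula is routine; the real content of condition (3) of the distributivity characterisation is producing a subquotient isomorphic to $D_2^\mu \oplus D_2^\mu$, i.e.\ forcing some self-extension inside the hook Specht module to split. The restriction-and-blocks idea cannot do this as stated. The two copies of $D_2^\mu$ you produce are copies of the \emph{same} simple $\F S_n$-module, hence lie in the same block, and their restrictions to $S_{n-1}$ are literally the same $\F S_{n-1}$-modules; applying the block decomposition to a putative non-split self-extension $0 \to D_2^\mu \to T \to D_2^\mu \to 0$ gives only a block-wise family of self-extensions $0 \to e_i D_2^\mu \to e_i T \to e_i D_2^\mu \to 0$, with no a priori reason that any of these split. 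You would need an honest input here, such as vanishing of the relevant self-extension group, an explicitly constructed decomposable subquotient, or a Murphy-style direct-summand decomposition whose validity for the required $\mu$ and $r$ is actually verified rather than invoked.

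Your account of the threshold $r \geq 10$ is also not yet an argument. Distributivity is strictly weaker than uniseriality and weaker than decomposing into a direct sum of uniserial summands, so neither the uniseriality classification nor Murphy's decomposability results by themselves delimit the distributive range, and the claim that the FPL and dual filtrations ``should accumulate enough combinatorial incompatibilities'' for $r \geq 10$ is an expectation, not a proof. Lemma \ref{not_uniserial_great_than_29} gives a template for how such a bound could in principle be established (finite case analysis over residue classes using periodicity), but the analogous finite check here would need to detect a split self-extension in every case, which brings you back to the unresolved splitting step.
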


We also conjecture that there is a similar result to \Cref{period_2_part} which holds for hook Specht modules in characteristic $2:$

\begin{conjecture}\label{hook_periodic_conjecture}
    Let $\lambda = (n-r,1^r) \vdash n$ and $\mu = (m-r,1^r) \vdash m,$ with $n-r > r$ and $m-r > r.$ If $n \equiv m \mod 2^{L(r)}$ then the submodule lattices of $S_2^\lambda$ and $S_2^\mu$ are isomorphic.
\end{conjecture}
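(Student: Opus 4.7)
The plan is to combine the filtration of Theorem~\ref{FPL} with the periodicity result Lemma~\ref{period_2_part}, and then lift the resulting factor-wise lattice isomorphisms to a global lattice isomorphism via translation functors. First, by Theorem~\ref{FPL} both $S_2^\lambda$ and $S_2^\mu$ carry filtrations $0 = M_{-1} < M_0 < \cdots < M_{\lfloor r/2 \rfloor} = S_2^\lambda$ (and analogously $M'_\bullet$ for $\mu$), with $M_k/M_{k-1} \cong S_2^{(n-r+2k,\,r-2k)}$ and $M'_k/M'_{k-1} \cong S_2^{(m-r+2k,\,r-2k)}$. Since $L(r-2k) \leq L(r)$ for all $k \geq 0$, the hypothesis $n \equiv m \pmod{2^{L(r)}}$ immediately gives $n-r+2k \equiv m-r+2k \pmod{2^{L(r-2k)}}$, so Lemma~\ref{period_2_part} furnishes a lattice isomorphism $\Phi_k$ between corresponding pairs of $2$-part Specht factors which is natural on composition-factor labels (by the remark following the lemma).

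The central step is then to assemble these factor-wise isomorphisms into a lattice isomorphism of the whole modules. The cleanest route is a translation-functor argument: using a suitable composition of the divided power functors $f_i^{(k)}$ and $e_i^{(k)}$, one should obtain a Morita (or at least derived) equivalence between the block of $\mathbb{F}_2 S_n$ containing $S_2^\lambda$ and the block of $\mathbb{F}_2 S_m$ containing $S_2^\mu$. The combinatorics of the $2$-abacus display of $(n-r,1^r)$ (namely, moving beads by a multiple of $2^{L(r)}$) should verify both that such an equivalence exists when $n \equiv m \pmod{2^{L(r)}}$ and that it sends the Specht module labelled by $(n-r,1^r)$ to the one labelled by $(m-r,1^r)$, since Specht modules transform predictably under these functors. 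A Morita equivalence automatically induces an isomorphism of submodule lattices.

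The main obstacle is verifying that the specific composition of translation functors really does send $S_2^\lambda$ to $S_2^\mu$, rather than to some other Specht module sitting in the same block; tracking this requires careful use of Kleshchev's modular branching rules (in the spirit of Theorem~\ref{e_i S^lambda} and Theorem~\ref{e_i D^lambda}) and checking that the induction-restriction sequence stays on Specht modules of hook shape at every stage. A more elementary backup strategy would be to describe each submodule of $S_2^\lambda$ combinatorially using the ordered tuples $\overline{X}_k$ of Theorem~\ref{FPL}, together with their dual analogues from Theorem~\ref{second_filtration}, and to set up an explicit $n \leftrightarrow m$ bijection of such descriptions. The difficulty there, as Example~\ref{example_of_filtration} makes clear, is that the submodule lattice contains many ``mixed'' submodules which cut across the filtration and are not visible from a single filtration alone, so one would need to use both filtrations in tandem (and their intersections and sums) to generate all submodules and argue that the resulting combinatorial data depends only on $n \bmod 2^{L(r)}$.
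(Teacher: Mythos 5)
This statement is a \emph{conjecture} in the paper (\Cref{hook_periodic_conjecture}); the authors leave it open and only offer the example with $S_2^{(5,1^3)}, S_2^{(9,1^3)}, S_2^{(13,1^3)}$ as evidence. So there is no proof in the paper to compare against, and the question is whether your sketch closes the gap. It does not.

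The central step of your argument --- obtaining a Morita (or derived) equivalence between the block of $\mathbb{F}_2 S_n$ containing $S_2^{(n-r,1^r)}$ and the block of $\mathbb{F}_2 S_m$ containing $S_2^{(m-r,1^r)}$ --- cannot work. For $p=2$, the $2$-core of a hook partition is $\emptyset$ or $(1)$ depending on the parity of $n$, so the block containing $S_2^{(n-r,1^r)}$ is the principal (or nearly principal) block of $\mathbb{F}_2 S_n$, with weight roughly $n/2$. Thus if $n\neq m$ the two blocks have different weights, hence different numbers of simple modules, and cannot be Morita or even derived equivalent (a derived equivalence preserves the rank of the Grothendieck group). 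Concretely, in the paper's own example the blocks containing $S_2^{(5,1^3)}$, $S_2^{(9,1^3)}$, $S_2^{(13,1^3)}$ have weights $4$, $6$, $8$ --- no equivalence of blocks is available, so the sentence ``A Morita equivalence automatically induces an isomorphism of submodule lattices'' has no content here. The conjecture is a statement about one specific module in each block having the same lattice, which is a much more delicate and local phenomenon than a block equivalence; indeed it \emph{cannot} come from a block equivalence, because the ambient blocks are genuinely non-equivalent.

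Your first observation --- that by \Cref{FPL} and \Cref{period_2_part} the filtration factors of $S_2^\lambda$ and $S_2^\mu$ have matching submodule lattices --- is correct and does constitute evidence, but as you note yourself it is far from sufficient: a filtration with factor-wise isomorphic lattices does not determine the global lattice (extensions and ``mixed'' submodules are precisely the issue). Your backup strategy of working directly with the $\overline{X}_k$ tuples from \Cref{FPL} and \Cref{second_filtration} is the kind of thing that would be needed, but you stop at identifying the obstacle rather than resolving it, and you don't explain how to capture the submodules that cut across both filtrations or how to show the resulting combinatorial data depends only on $n \bmod 2^{L(r)}$. As it stands the argument has an essential gap and the conjecture remains open.
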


We demonstrate an example of \Cref{hook_periodic_conjecture} by comparing the submodule lattices of $S^{(5,1^3)}_2, S^{(9,1^3)}_2$ and $S^{(13,1^3)}_2.$ In the diagrams below, the vertices represent submodules of $S^\lambda_2,$ and the nodes are labelled by the dimension of the submodule. The directed edge from a vertex $u$ to a vertex $v$ indicates that the submodule associated to $u$ is a maximal submodule of the submodule associated to $v,$ and the edges are labelled by their irreducible subquotients. \\
\newline
\scalebox{0.8}{
\begin{tabular}{|c|c|c|}
         \hline
        \begin{tikzpicture}[main/.style = {draw, circle}, scale = 3]
            \node at (0,-1/2) {The submodule lattice for $S^{( 5, 1^3 )}_2$};
            
            \node[main] (0) at (0,0) {$0$};
            \node[main] (1) at (-1/2,1) {$1$};
            \node[main] (14) at (1/2,1) {$14$};
            \node[main] (15) at (-1/2,2) {$15$};
            \node[main] (20) at (1/2,2) {$20$};
            \node[main] (21) at (-1/2,3) {$21$};
            \node[main] (28) at (1/2,3) {$28$};
            \node[main] (29) at (0,4) {$29$};
            \node[main] (35) at (0,5) {$35$};
            
            \draw[->] (0) edge ["$D_2^{(8)}$", pos = 0.5] (1);
            \draw[->] (0) edge ["$D_2^{(6,2)}$"', pos = 0.5] (14);
            
            \draw[->] (1) edge ["$D_2^{(6,2)}$", pos = 0.5] (15);
            
            \draw[->] (14) edge ["$D_2^{(8)}$", pos = 0.5] (15);
            \draw[->] (14) edge ["$D_2^{(7,1)}$"', pos = 0.5] (20);
            
            \draw[->] (15) edge ["$D_2^{(7,1)}$", pos = 0.5] (21);
            
            \draw[->] (20) edge ["$D_2^{(8)}$"', pos = 0.5] (21);
            \draw[->] (20) edge ["$D_2^{(5,3)}$"', pos = 0.5] (28);
            
            \draw[->] (21) edge ["$D_2^{(5,3)}$", pos = 0.5] (29);
            
            \draw[->] (28) edge ["$D_2^{(8)}$"', pos = 0.5] (29);
            
            \draw[->] (29) edge ["$D_2^{(7,1)}$", pos = 0.5] (35);
            
        \end{tikzpicture}
         &  
        \begin{tikzpicture}[main/.style = {draw, circle}, scale = 3]
            \node at (0,-1/2) {The submodule lattice for $S^{( 9, 1^3 )}_2$};
            
            \node[main] (0) at (0,0) {$0$};
            \node[main] (1) at (-1/2,1) {$1$};
            \node[main] (44) at (1/2,1) {$44$};
            \node[main] (45) at (-1/2,2) {$45$};
            \node[main] (54) at (1/2,2) {$54$};
            \node[main] (55) at (-1/2,3) {$55$};
            \node[main] (154) at (1/2,3) {$154$};
            \node[main] (155) at (0,4) {$155$};
            \node[main] (165) at (0,5) {$165$};
            
            \draw[->] (0) edge ["$D_2^{(12)}$", pos = 0.5] (1);
            \draw[->] (0) edge ["$D_2^{(10,2)}$"', pos = 0.5] (44);
            
            \draw[->] (1) edge ["$D_2^{(10,2)}$", pos = 0.5] (45);
            
            \draw[->] (44) edge ["$D_2^{(12)}$", pos = 0.5] (45);
            \draw[->] (44) edge ["$D_2^{(11,1)}$"', pos = 0.5] (54);
            
            \draw[->] (45) edge ["$D_2^{(11,1)}$", pos = 0.5] (55);
            
            \draw[->] (54) edge ["$D_2^{(12)}$"', pos = 0.5] (55);
            \draw[->] (54) edge ["$D_2^{(9,3)}$"', pos = 0.5] (154);
            
            \draw[->] (55) edge ["$D_2^{(9,3)}$", pos = 0.5] (155);
            
            \draw[->] (154) edge ["$D_2^{(12)}$"', pos = 0.5] (155);
            
            \draw[->] (155) edge ["$D_2^{(11,1)}$", pos = 0.5] (165);
            
        \end{tikzpicture}
        &
        \begin{tikzpicture}[main/.style = {draw, circle}, scale = 3]
            \node at (0,-1/2) {The submodule lattice for $S^{( 13, 1^3 )}_2$};
            
            \node[main] (0) at (0,0) {$0$};
            \node[main] (1) at (-1/2,1) {$1$};
            \node[main] (90) at (1/2,1) {$90$};
            \node[main] (91) at (-1/2,2) {$91$};
            \node[main] (104) at (1/2,2) {$104$};
            \node[main] (105) at (-1/2,3) {$105$};
            \node[main] (440) at (1/2,3) {$440$};
            \node[main] (441) at (0,4) {$441$};
            \node[main] (455) at (0,5) {$455$};
            
            \draw[->] (0) edge ["$D_2^{(16)}$", pos = 0.5] (1);
            \draw[->] (0) edge ["$D_2^{(14,2)}$"', pos = 0.5] (90);
            
            \draw[->] (1) edge ["$D_2^{(14,2)}$", pos = 0.5] (91);
            
            \draw[->] (90) edge ["$D_2^{(16)}$", pos = 0.5] (91);
            \draw[->] (90) edge ["$D_2^{(15,1)}$"', pos = 0.5] (104);
            
            \draw[->] (91) edge ["$D_2^{(15,1)}$", pos = 0.5] (105);
            
            \draw[->] (104) edge ["$D_2^{(16)}$"', pos = 0.5] (105);
            \draw[->] (104) edge ["$D_2^{(13,3)}$"', pos = 0.5] (440);
            
            \draw[->] (105) edge ["$D_2^{(13,3)}$", pos = 0.5] (441);
            
            \draw[->] (440) edge ["$D_2^{(16)}$"', pos = 0.5] (441);
            
            \draw[->] (441) edge ["$D_2^{(15,1)}$", pos = 0.5] (455);
            
        \end{tikzpicture} 
        \\
         \hline
    \end{tabular}
    }

Can one extend the combinatorial description of the submodule structure of $2$-part Specht modules as in \Cref{2_part_submod_structure_new_proof} to a similar construction for the submodule structure of Specht modules in general?

\newpage
\printbibliography 
\end{document}